\theoremstyle{plain}
\newtheorem{thm}{Theorem}[section]
\newtheorem{prop}[thm]{Proposition}
\newtheorem{cor}[thm]{Corollary}
\newtheorem{lem}[thm]{Lemma}
\theoremstyle{definition}
\newtheorem{defn}[thm]{Definition}
\theoremstyle{remark}
\newtheorem{rem}[thm]{Remark}
\theoremstyle{plain}
\newcommand{\R}{\mathbb{R}}
\newcommand{\N}{\mathbb{N}}
\newcommand{\CP}{\mathbb{C}P}
\newcommand{\dimn}{\mathrm{dim}}
\newcommand{\grad}{\mathrm{grad}}
\newcommand{\spam}{\mathrm{span}}
\newcommand{\scal}{\mathrm{scal}}
\newcommand{\ric}{\mathrm{Ric}}
\newcommand{\trace}{\mathrm{tr}}
\newcommand{\kernel}{\mathrm{ker}}
\newcommand{\volume}{\mathrm{vol}}
\newcommand{\dv}{\text{ }dV}
\newcommand{\Diff}{\mathrm{Diff}}
\newcommand{\gradient}{\mathrm{grad}}
\newcommand*{\suchthat}[1]{\left|\vphantom{#1}\right.}
\renewcommand{\title}[1]{{\bfseries #1}\par}
\renewcommand{\author}[1]{\medskip{#1}\par\smallskip}
\newcommand{\affiliation}[1]{{\itshape #1}\par}
\newcommand{\email}[1]{E-mail:~\texttt{#1}\par}
\numberwithin{equation}{section}
\begin{document}
\begin{center}
\title{\LARGE Stability and instability of Ricci solitons}
\vspace{3mm}
\author{\Large Klaus Kröncke}
\vspace{3mm}
\affiliation{Universität Potsdam, Institut für Mathematik\\Am Neuen Palais 10\\14469 Potsdam, Germany}
\email{klaus.kroencke@uni-potsdam.de}
\end{center}
\vspace{2mm}
\begin{abstract}We consider the volume-normalized Ricci flow close to compact shrinking Ricci solitons. We show that if a compact Ricci soliton $(M,g)$ is a local maximum of Perelman's shrinker entropy,
any normalized Ricci flow starting close to it exists for all time and converges towards a Ricci soliton. If $g$ is not a local maximum of the shrinker entropy, we show that there exists a nontrivial normalized Ricci flow emerging from it.
These theorems are analogues of results in the Ricci-flat and in the Einstein case \cite{HM13,Kro13a}.
\end{abstract}
\section{Introduction}
 A Riemannian manifold $(M,g)$ is called a Ricci soliton if there exist a vector field $X\in\mathfrak{X}(M)$ and a constant $c\in\R$ such that
\begin{align*}
 \ric_g+L_Xg=c\cdot g.
\end{align*}
The soliton is called gradient, if $X=\gradient f$ for some $f\in C^{\infty}(M)$. We call $(M,g)$ expanding, if $c<0$, steady, if $c=0$ and shrinking, if $c>0$.
If $X=0$, we recover the definition of an Einstein metric with Einstein constant $c$. If $X\neq0$, we call the soliton nontrivial.

Ricci solitons were first introduced by Hamilton in the eighties \cite{Ham88}. They appear as self-similar solutions of the Ricci flow
\begin{align*}
 \dot{g}(t)=-2\ric_{g(t)}.
\end{align*}
More precisely, a Ricci flow starting at a Ricci soliton $g$ evolves by 
\begin{align*}
 g(t)=(1-2ct)\varphi^*_tg
\end{align*}
where $\varphi_t$ is a family of diffeomorphisms on $M$. In other words, Ricci solitons are fixed points of the Ricci flow considered as a dynamical system in the space of metrics modulo diffeomorphism and rescaling.
Ricci solitons satisfy many interesting geometric properties and have been studied extensively in recent research (see \cite{Cao10} for a survey).
They also appear in string theory \cite{Fri80,Kho08}.

 Compact Ricci solitons satisfy additional properties.
 In dimensions $n=2,3$, any compact Ricci soliton is of constant curvature \cite{Ham88,Ive93}. 
By Perelman's work \cite{Per02}, they are always gradient (see also \cite[Proposition 3.1]{ELM08}). Moreover, any expanding or steady compact Ricci soliton is necessarily Einstein \cite[Proposition 1.1]{Cao10}.
In dimensions $n\geq4$ there exist
examples of nontrivial compact Ricci solitons \cite{Koi90,Cao96,CG96,WZ04,PS10,DW11}. All known nontrivial compact Ricci solitons are K\"ahler.

In this work, we are going to study the Ricci flow as a dynamical system close to compact shrinking Ricci solitons.
Previously the behaviour of Ricci flow close to Ricci-flat metrics has been studied by various authors
\cite{GIK02,Ses06,Has12,HM13}. The Einstein case was studied in \cite{Ye93} and in a recent paper by the author \cite{Kro13a}.

In the context of Ricci solitons, it is more convenient to deal with the volume-normalized Ricci flow
\begin{align}\label{volumepreservingricciflow}\dot{g}(t)=-2\ric_{g(t)}+\frac{2}{n}\frac{1}{\volume(M,g(t))}\left(\int_M\scal_{g(t)}\dv_{g(t)}\right)g(t),
\end{align}
where $\scal_{g(t)}$, $\dv_{g(t)}$ and $\volume(M,g(t))$ are the scalar curvature, the volume element and the total volume with respect to $g(t)$, respectively.
\begin{defn}
 A compact Ricci soliton $(M,g)$ is called dynamically stable if for any $C^k$-neighbourhood $\mathcal{U}$ of $g$ in the space of metrics (where $k\geq3$), there exists a $C^{k+5}$-neighbourhood $\mathcal{V}\subset\mathcal{U}$
such that for any $g_0\in\mathcal{V}$, the normalized Ricci flow \eqref{volumepreservingricciflow} starting at $g_0$ exists for all $t\geq0$ and converges modulo diffeomorphism to an Einstein metric in $\mathcal{U}$ as $t\to\infty$.

We call a compact Ricci soliton $(M,g)$ dynamically unstable if there exists a nontrivial normalized Ricci flow defined on $(-\infty,0]$ which converges modulo diffeomorphism to $g$ as $t\to-\infty$.
\end{defn}
In his pioneering work \cite{Per02}, Perelman introduced the shrinker entropy $\nu$, which is a functional on the space of metrics and admits precisely the shrinking Ricci solitons as its critical points.
Perelman discovered the remarkable property that $\nu$ is nondecreasing under the Ricci flow and stays constant only at its critical points.
The shrinker entropy is the most important tool in proving our main theorems.
\begin{thm}[Dynamical stability]\label{Thm1}Let $(M,g)$ be a compact shrinking Ricci soliton. If $(M,g)$ is a local maximizer of $\nu$, it is dynamically stable.
\end{thm}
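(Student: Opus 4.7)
The plan is to exploit Perelman's shrinker entropy $\nu$, which is monotone along the normalized Ricci flow with critical set equal to the shrinking Ricci solitons, together with a {\L}ojasiewicz--Simon gradient inequality near $g$, to force long-time existence and convergence. First I would recall Perelman's monotonicity formula: along \eqref{volumepreservingricciflow} one has
\begin{align*}
\frac{d}{dt}\nu(g(t)) = 2\int_M \left|\ric_{g(t)} + \nabla^2 f - \frac{1}{2\tau} g(t)\right|^2 e^{-f}\, dV_{g(t)} \geq 0,
\end{align*}
where $(f,\tau)$ is the minimizing pair realizing $\nu(g(t))$; the volume-normalizing term in \eqref{volumepreservingricciflow} only adds a multiple of $g$, a direction along which $\nu$ is invariant, so monotonicity survives. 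The integrand vanishes precisely at shrinking gradient solitons, so the $L^2$-gradient of $\nu$ is essentially $\ric + \nabla^2 f - \frac{1}{2\tau} g$, and the normalized Ricci flow is, modulo diffeomorphisms and rescaling, a gradient flow of $\nu$.

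The heart of the argument will be a {\L}ojasiewicz--Simon inequality of the form
\begin{align*}
|\nu(\tilde g) - \nu(g)|^{2-\theta} \leq C\,\|\gradient\nu(\tilde g)\|_{L^2}^2
\end{align*}
valid for $\tilde g$ in a small $C^{2,\alpha}$-neighbourhood of $g$, for some $\theta\in(0,1]$. To prove it I would first apply an implicit function theorem to write the minimizing pair $(f,\tau)$ as real-analytic functions of $g$ in a neighbourhood of the soliton; the relevant linearized operator is a weighted Laplacian with closed range whose kernel consists only of constants, which makes the reduction possible. The functional $\nu$ then becomes a real-analytic function of the metric alone, and its second variation --- a Lichnerowicz-type operator weighted by $e^{-f}$ whose structure is known from the soliton identities --- is Fredholm after one restricts to a slice transverse to the diffeomorphism orbit through $g$. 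Simon's abstract {\L}ojasiewicz--Simon theorem then yields the inequality.

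With this in hand, the local maximum hypothesis gives $N(t):=\nu(g)-\nu(g(t))\geq 0$ along any flow that remains near $g$, and Perelman's monotonicity combined with {\L}ojasiewicz--Simon yields
\begin{align*}
-\tfrac{d}{dt}N(t)^{\theta/2} \geq c\,\|\gradient\nu(g(t))\|_{L^2},
\end{align*}
so that $\int_0^\infty \|\gradient\nu(g(t))\|_{L^2}\,dt<\infty$. A trapping argument --- I would choose the initial data in a $C^{k+5}$-ball so small that the flow cannot exit the $C^k$-neighbourhood in which {\L}ojasiewicz--Simon is valid --- combined with parabolic regularity upgrades this $L^1$-in-time bound to $C^k$-convergence, and pulling back by an appropriately constructed one-parameter family of diffeomorphisms yields convergence modulo diffeomorphism to a nearby shrinking Ricci soliton.

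The main obstacle will be the {\L}ojasiewicz--Simon inequality itself. In contrast with the Ricci-flat and Einstein cases treated in \cite{HM13,Kro13a}, the shrinker entropy is defined through a nonlocal minimization in $(f,\tau)$, so establishing analyticity and the Fredholm property of its gradient requires the correct choice of weighted function spaces and a careful interaction between the implicit function theorem for the minimizers and the $\Diff(M)$-invariance. A secondary technical point is ensuring that the gap $k+5\mapsto k$ in regularity is enough to interpolate the weak gradient estimate into the $C^k$-convergence required by the definition of dynamical stability.
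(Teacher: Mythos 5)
Your proposal follows essentially the same route as the paper: monotonicity of $\nu$ along the (suitably modified) flow, analyticity of the minimizing pair $(f_g,\tau_g)$ via the implicit function theorem, a {\L}ojasiewicz--Simon inequality on a slice transverse to the diffeomorphism orbit, and a trapping-plus-interpolation argument that converts the $L^1$-in-time gradient bound into $C^k$-convergence to a nearby soliton. The only cosmetic differences are that the paper works with the $\tau$-flow and its diffeomorphism-modified version (the weighted $L^2$-gradient flow of $\nu$) rather than directly with the volume-normalized flow, and uses Shi-type derivative estimates plus a separate short-time lemma to bridge the $C^{k+5}\to C^k$ regularity gap you correctly flag as the secondary technical point.
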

Note that we do not need to assume that $(M,g)$ is a strict local maximizer in the space of metrics modulo diffeomorphism and rescaling.
\begin{thm}[Dynamical instability]\label{Thm2}Let $(M,g)$ be a compact shrinking Ricci soliton. If $(M,g)$ is not a local maximizer of $\nu$, it is dynamically unstable.
\end{thm}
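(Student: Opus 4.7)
The strategy follows the Łojasiewicz--Simon scheme of the Ricci-flat and Einstein cases \cite{HM13,Kro13a}: the desired nontrivial ancient flow is obtained as a smooth subsequential limit of normalized Ricci flows that start from nearby metrics with strictly larger $\nu$-value, after a suitable time shift. All statements below are to be understood modulo the gauge of diffeomorphisms and scalings under which $\nu$ and the normalized flow are invariant, so the arguments may be run, for instance, in Ebin's slice or along the Ricci--DeTurck flow.

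Since $(M,g)$ is not a local maximizer of $\nu$, I first pick a sequence of metrics $g_k \to g$ in $C^{k+5}$ with $\nu(g_k) > \nu(g)$. Fix a small $C^k$-neighbourhood $\mathcal{U}$ of $g$, let $g_k(t)$ be the volume-normalized Ricci flow with $g_k(0) = g_k$, and let $T_k \in (0,\infty]$ be the first exit time of $g_k(t)$ from $\mathcal{U}$. I then aim to show $T_k < \infty$ for every $k$ and $T_k \to \infty$ as $k \to \infty$. Finiteness of $T_k$ follows from the strict monotonicity of $\nu$ off critical points combined with a Łojasiewicz--Simon inequality for $\nu$ near $g$: were $g_k(t)$ to stay in $\overline{\mathcal{U}}$ for all $t \geq 0$, the inequality would force convergence to a shrinking soliton $g_\infty \in \overline{\mathcal{U}}$ with $\nu(g_\infty) \geq \nu(g_k) > \nu(g)$, contradicting the assumption once $\mathcal{U}$ is chosen sufficiently small. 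Divergence $T_k \to \infty$ is standard continuous dependence of the gauge-fixed flow on initial data over any fixed finite interval.

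Time-shifting, set $h_k(t) := g_k(t + T_k)$ on $[-T_k, 0]$. Since each $h_k$ takes values in $\overline{\mathcal{U}}$, parabolic regularity yields a smooth subsequential limit $h(t)$ defined on $(-\infty, 0]$ with values in $\overline{\mathcal{U}}$; $h(0) \neq g$ because $h_k(0) \in \partial\mathcal{U}$, so the limit is nontrivial. To finish I must show $h(t) \to g$ modulo diffeomorphism as $t \to -\infty$. Monotonicity gives that $\nu(h(t))$ is nondecreasing, and $\nu(h_k(t)) \geq \nu(g_k) > \nu(g)$ passes to $\nu(h(t)) \geq \nu(g)$ in the limit. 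Combining the evolution identity $\tfrac{d}{dt}\nu(h(t)) = \|\nabla \nu(h(t))\|^2$ with the Łojasiewicz--Simon inequality near $g$ yields $\int_{-\infty}^0 \|\nabla \nu(h(t))\| \, dt < \infty$, so $h$ has finite length in an appropriate norm and converges as $t \to -\infty$ to a critical point of $\nu$ lying in $\overline{\mathcal{U}}$, which must be in the gauge orbit of $g$ once $\mathcal{U}$ is small. The principal obstacle is the Łojasiewicz--Simon inequality for $\nu$ at the possibly non-isolated soliton $g$: the gradient of $\nu$ involves Perelman's auxiliary minimizer for $\mathcal{W}$ and its linearisation degenerates along the gauge orbits, so the length estimate has to be carried out carefully modulo these symmetries, in the spirit of \cite{Kro13a}.
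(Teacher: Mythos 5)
Your overall strategy coincides with the paper's: take $g_k\to g$ with $\nu(g_k)>\nu(g)$, run the (gauge-modified) flow, show the exit time from a fixed neighbourhood is finite but tends to infinity, time-shift, extract a subsequential limit, and control everything with the Lojasiewicz--Simon inequality for $\nu$. Your argument for finiteness of $T_k$ (permanent confinement would force convergence to a critical point with $\nu$-value $>\nu(g)$, impossible since the Lojasiewicz--Simon inequality forces every critical point near $g$ to have $\nu$-value exactly $\nu(g)$) is a valid variant of the paper's more direct ODE comparison $\frac{d}{dt}(\nu-\nu(g))^{1-2\sigma}\geq -C$, which makes $\nu$ blow up in finite time inside the neighbourhood.

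There is, however, a genuine gap in your last step. You show that the limit flow $h(t)$ has finite length backward in time and therefore converges as $t\to-\infty$ to \emph{some} critical point of $\nu$ in $\overline{\mathcal{U}}$, and then assert it ``must be in the gauge orbit of $g$ once $\mathcal{U}$ is small.'' That assertion requires $g$ to be isolated in the moduli space of shrinking solitons, which is not assumed anywhere (the paper explicitly allows non-isolated solitons: it does not require strict local maximality, and Section 6 studies solitons with nontrivial infinitesimal solitonic deformations). The Lojasiewicz--Simon inequality pins down only the $\nu$-value of nearby critical points, not the critical point itself, so your construction a priori produces an ancient flow emerging from some \emph{other} soliton $g'$ near $g$ --- proving instability of $g'$, not of $g$. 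The paper closes this by anchoring the length estimate at the initial data: for the approximating flows one has
\begin{align*}
\left\|\tilde{g}^s_i(T_i)-\tilde{g}_i^s(t)\right\|\leq C\left(\nu(\tilde{g}_i^s(t))-\nu(g)\right)^{\theta}\leq\left[-C_5t+C_6\right]^{-\frac{\theta}{2\sigma-1}}
\end{align*}
\emph{uniformly in $i$}, with $\tilde{g}^s_i(T_i)=\bar{g}_i\to g$; passing to the limit in $i$ for each fixed $t$ gives $\|g-\tilde{g}(t)\|\leq[-C_5t+C_6]^{-\theta/(2\sigma-1)}\to0$, so the limit flow really emerges from $g$ itself. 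You need this uniform, anchored estimate rather than a length bound on the limit flow alone. A secondary, fixable point: nontriviality of the limit does not follow immediately from $h_k(0)\in\partial\mathcal{U}$, because the limit is taken in a strictly weaker topology than the one defining $\partial\mathcal{U}$; the paper instead detects nontriviality through the continuous quantity $\nu$, via $\epsilon\leq\|\bar{g}_i-g\|+C(\nu(\tilde{g}_i(t_i))-\nu(g))^{\theta}$, which survives the passage to the limit. Your version can be repaired by interpolation against the uniform higher-order curvature bounds, but as written it is an unjustified step.
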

Observe that any compact shrinking Ricci soliton must be either dynamically stable or unstable, so we have a complete description of the Ricci flow as a dynamical system close to Ricci solitons.
Since non-shrinking compact Ricci solitons are Einstein, they are already covered by previous results.

In the Einstein case, we additionally were able to give geometric stability/instability conditions in terms of the conformal Yamabe invariant and the Laplace spectrum \cite{Kro13a}.
It would be intersting to find similar geometric conditions in the case of nontrivial Ricci solitons.
So far, we can characterize stability in terms of the eigenvalues of a second-order differential operator, supposed that an additional technical condition holds.
\begin{thm}
 Let $(M,g)$ be a compact shrinking Ricci soliton and suppose all infinitesimal solitonic deformations are integrable. Then $(M,g)$ is a local maximum of $\nu$ if and only if $\nu''\leq0$.
\end{thm}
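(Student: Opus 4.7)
The forward implication is immediate from Taylor's formula: if $g$ is a local maximum then $\nu''(h,h)\leq 0$ for every symmetric $2$-tensor $h$. My focus is on the converse, and the plan is to combine the integrability hypothesis with a Morse--Bott-type slice argument.

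First I would analyse $\ker\nu''$ at the critical point $g$. Modulo the pure-gauge directions (Lie derivatives of $g$ and constant rescalings, both of which preserve $\nu$), this kernel is precisely the space of infinitesimal solitonic deformations. By the integrability hypothesis, via a standard Kuranishi-type construction, the compact shrinking Ricci solitons near $g$ form a smooth finite-dimensional submanifold $\mathcal{P}$ through $g$, whose tangent space at $g$ equals the space of infinitesimal solitonic deformations. Since $d\nu$ vanishes identically along $\mathcal{P}$, the functional $\nu|_\mathcal{P}$ is locally constant and hence equal to $\nu(g)$ on the component of $g$.

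Next I would set up a slice. Using the implicit function theorem applied to an appropriate orthogonal projection, every metric $\tilde g$ in a small $C^k$-neighbourhood of $g$ is gauge-equivalent to a metric of the form $g_p+k$ with $g_p\in\mathcal{P}$ and $k$ $L^2$-orthogonal to $\ker\nu''_{g_p}$, and with $\|g_p-g\|+\|k\|$ controlled by $\|\tilde g-g\|$. On the orthogonal complement of the kernel, a compactness and continuity argument upgrades the hypothesis $\nu''\leq 0$ at $g$ to a uniform coercive estimate $\nu''_{g_p}(k,k)\leq -c\|k\|^2$ for all $p$ in a smaller neighbourhood. Taylor expansion of $s\mapsto \nu(g_p+sk)$, combined with absorption of the cubic and higher remainder into this negative-definite quadratic term, yields $\nu(\tilde g)=\nu(g_p+k)\leq \nu(g_p)=\nu(g)$, which is the desired local maximum property.

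I expect the main obstacle to be the last step, namely producing a cubic Taylor estimate of the form $|R_3(k)|\leq C\|k\|^3$ compatible with the function spaces used in the slice theorem and in the coercivity estimate. Making this rigorous typically requires elliptic regularity for the stability operator representing $\nu''$ --- so as to improve $L^2$ control of $k$ into Sobolev control --- together with the bounded geometry of $(M,g)$, and is closely related to the \L{}ojasiewicz--Simon machinery that presumably drives the dynamical stability theorem itself.
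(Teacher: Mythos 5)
Your proposal is correct and follows essentially the same route as the paper: reduction to a slice, the Podest\`a--Spiro/Kuranishi-type finite-dimensional manifold $\mathcal{P}$ of nearby solitons furnished by integrability, the decomposition $g=\bar g+h$ with $h$ orthogonal to $T_{\bar g}\mathcal{P}$, a uniform coercive estimate $\nu''_{\bar g}(h)\leq -c\|h\|_{H^1}^2$, and absorption of the cubic remainder via the third-variation bound $|\nu'''|\leq C\|h\|_{H^1}^2\|h\|_{C^{2,\alpha}}$. The technical obstacle you single out (matching the norms in the remainder estimate to those in the coercivity estimate) is exactly what the paper's Proposition on the third variation of $\nu$ resolves.
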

The integrability condition means that all elements in the kernel of $\nu''$ can be integrated to curves of Ricci solitons. We show that this condition does not always hold.
\begin{thm}\label{nonintegrableISD}
 The complex projective space $(\CP^n,g_{fs})$ with the Fubini-Study metric admits nonintegrable infintesimal solitonic deformations.
\end{thm}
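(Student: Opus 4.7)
The plan is to exhibit an explicit infinitesimal solitonic deformation (ISD) $h$ on $(\CP^n,g_{fs})$ and show that a second-order obstruction to integrating $h$ to a formal curve of shrinking Ricci solitons is nonzero.

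First, I identify $\kernel\nu''$ using the isometry group $G=SU(n+1)$ together with the Kähler structure. Since $g_{fs}$ is Einstein with Einstein constant $E$, the explicit formula for $\nu''$ reduces the ISD equation on TT-tensors to $\Delta_L h = 2Eh$, supplemented by analogous scalar conditions on the trace and divergence components. The operator involved is $G$-equivariant, so its kernel decomposes into $G$-isotypic summands of $\Gamma(\text{Sym}^2 T^*\CP^n)$, and the relevant eigenvalue is detected by the Casimir of $G$ on each summand. Splitting symmetric tensors into $J$-Hermitian and $J$-anti-Hermitian parts, I look for ISDs in the anti-Hermitian sector, since $\CP^n$ is rigid as a complex manifold and such tensors cannot obviously integrate to complex-structure deformations. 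An irreducible $G$-summand of anti-Hermitian TT-tensors in which the eigenvalue $2E$ is attained produces an explicit nonzero $h \in \kernel \nu''$.

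Second, I set up the integrability obstruction at second order. If $h$ were integrable, there would exist $k \in \Gamma(\text{Sym}^2 T^*\CP^n)$ and a curve of vector fields $X(t)$ with $X(0)=0$ so that $g(t) = g_{fs} + th + t^2 k + O(t^3)$ formally satisfies $\ric_{g(t)} + L_{X(t)} g(t) = c(t) g(t)$. Expanding in $t$, the $t^1$-equation is the ISD condition for $h$, and the $t^2$-equation takes the form $Lk = Q(h,h)$, where $L$ is the same $G$-equivariant operator appearing in the ISD definition and $Q(h,h)$ is an explicit quadratic expression in $h$ and $\nabla h$ arising from the second variation of $\ric + L_X g - cg$. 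By $L^2$-self-adjointness of $L$, the equation $Lk = Q(h,h)$ admits a solution $k$ if and only if $Q(h,h) \perp \kernel L$ in $L^2$.

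Third, I compute the obstruction $\int_{\CP^n} \langle Q(h,h), h' \rangle \dv_{g_{fs}}$ for $h' \in \kernel \nu''$ and show it is nonzero for a suitable $h, h'$ in the isotypic summand from Step 1. By $G$-equivariance, this pairing reduces to a single Clebsch--Gordan-type scalar obtained by projecting the symmetric square of the ISD representation back onto itself. The main obstacle is the nonvanishing of this scalar: it must be extracted from an explicit computation involving the Fubini-Study curvature tensor and the precise description of $h$, and one has to rule out any unexpected cancellation forced by the representation-theoretic algebra. For low $n$ this can be verified by a direct tensor calculation on $\CP^n$; the general $n$ case follows from a uniform formula for the corresponding intertwining coefficient. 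Once the nonvanishing is established, no $k$ solves the second-order equation, $h$ is not integrable, and the theorem follows.
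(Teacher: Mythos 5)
There is a genuine gap, in fact two. First, your Step 1 looks for the infinitesimal solitonic deformation in the wrong place. By Lemma \ref{ISDIED} the ISDs of an Einstein metric split as $IED\oplus\{\mu v\cdot g+\nabla^2 v \mid \Delta v=2\mu v\}$, and for $(\CP^n,g_{fs})$ the first summand is zero: Koiso's rigidity results for compact symmetric spaces show that $\CP^n$ admits no infinitesimal Einstein deformations, so there are no TT-tensor (in particular no anti-Hermitian TT) elements of $\kernel\nu''$ at all. The entire kernel comes from the scalar sector, i.e.\ from Laplace eigenfunctions $v$ with eigenvalue $2\mu=1/\tau$ (this borderline eigenvalue is exactly why $\CP^n$ is only \emph{neutrally} linearly stable). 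So the "explicit nonzero $h$ in an irreducible $G$-summand of anti-Hermitian TT-tensors" that your argument hinges on does not exist, and the construction cannot get started as written. The deformation actually used in the paper is $h=v\cdot g_{fs}+2\tau\nabla^2 v$ for such an eigenfunction $v$.

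Second, even granting a correct candidate $h$, your Step 3 does not prove the non-vanishing of the obstruction; it only asserts that the relevant Clebsch--Gordan scalar "must be extracted from an explicit computation" and "can be verified by a direct tensor calculation" for low $n$. That non-vanishing \emph{is} the theorem, so leaving it as a computation to be done elsewhere leaves the proof incomplete. (Your second-order setup $Lk=Q(h,h)$ with the orthogonality criterion is also more delicate than stated, since one must gauge-fix the diffeomorphism and scaling freedom in $X(t)$ and $c(t)$; this is the Podest\`a--Spiro/Kuranishi machinery.) The paper sidesteps all of this: along any curve $g_t$ of Ricci solitons, every $g_t$ is a critical point of $\nu$, so $\nu(g_t)$ is constant and all its $t$-derivatives vanish; since $h\in\kernel\nu''$ and $g_{fs}$ is critical, the third derivative reduces to $\nu'''(h)$, and since $2\tau\nabla^2 v$ is a Lie derivative term, diffeomorphism invariance gives $\nu'''(h)=\nu'''(v\cdot g_{fs})=\frac{2n-2}{\volume(M,g_{fs})}\int_M v^3\dv$, which is nonzero for a suitable eigenfunction $v$ by the computation in \cite{Kro13a}. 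If you want to salvage your approach, you would need to replace the TT-ansatz by the scalar one and actually evaluate the cubic integral, at which point you have essentially reproduced the paper's argument in a more laborious form.
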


\section{Notation and conventions}
For the Riemann curvature tensor, we use the sign convention such that
 $R_{X,Y}Z=\nabla^2_{X,Y}Z-\nabla^2_{Y,X}Z$. Given a fixed metric, we equip the bundle of $(r,s)$-tensor fields (and any subbundle) with the natural pointwise scalar product induced by the metric.
By $S^pM$, we denote the bundle of symmetric $(0,p)$-tensors.
For a given $f\in C^{\infty}(M)$, we introduce some $f$-weighted differential operators. The $f$-weighted Laplacian (or Baker-Emery Laplacian) acting on $C^{\infty}(S^pM)$ is
\begin{align*}
 \Delta_f h=-\sum_{i=1}^n \nabla^2_{e_i,e_i}h+\nabla^2_{\gradient f}h.
\end{align*}
By the sign convention, $\Delta_f=(\nabla^*_f)\nabla$, where $\nabla^*_f$ is the adjoint of $\delta$ with respect to the weighted $L^2$-scalar product $\int_M\langle.,.\rangle e^{-f}\dv$.
The weighted divergence $\delta_f:C^{\infty}(S^pM)\to C^{\infty}(S^{p-1}M)$ and its formal adjoint $\delta_f^*\colon C^{\infty}(S^{p-1}M)\to C^{\infty}(S^pM)$ with respect to the weighted scalar product are given by
\begin{align*}\delta_f T(X_1,\ldots,X_{p-1})=&-\sum_{i=1}^n\nabla_{e_i}T(e_i,X_1,\ldots,X_{p-1})+T(\gradient f,X_1,\ldots,X_{p-1}),\\
            \delta_f^*T(X_1,\ldots,X_p)=&\frac{1}{p}\sum_{i=0}^{p-1}\nabla_{X_{1+i}}T(X_{2+i},\ldots,X_{p+i}),
\end{align*}
where the sums $1+i,\ldots,p+i$ are taken modulo $p$. If $f$ is constant, we recover the usual notions of Laplacian and divergence. In this case, we will drop the $f$ in the notation.
For $\omega\in\Omega^1(M)$, we have $\delta_f^*\omega=L_{\omega^{\sharp}}g$ where $\omega^{\sharp}$ is the sharp of $\omega$.
Thus, $\delta_f^*(\Omega^1(M))$ is the tangent space of the manifold $g\cdot \Diff(M)=\left\{\varphi^*g|\varphi\in\Diff(M)\right\}$.
Throughout, any manifold $M$ will be compact and any metric considered on $M$ will be smooth, unless the contrary is explicitly asserted.
   \section{The shrinker entropy}
Let $g$ be a Riemannian metric, $f\in C^{\infty}(M)$, $\tau>0$ and define
\begin{align*}\mathcal{W}(g,f,\tau)=\frac{1}{(4\pi\tau)^{n/2}}\int_M [\tau(|\nabla f|^2_g+\scal_g)+f-n]e^{-f}\dv.
\end{align*}
Let\index{$\mu_(g,\tau)$}
 \begin{align*}\mu(g,\tau)&=\inf \left\{\mathcal{W}(g,f,\tau)\suchthat{f\in C^{\infty}(M),\frac{1}{(4\pi\tau)^{n/2}}\int_M e^{-f}\dv_g=1}f\in C^{\infty}(M),\frac{1}{(4\pi\tau)^{n/2}}\int_M e^{-f}\dv_g=1\right\}.
 \end{align*}
For any fixed $\tau>0$, the infimum is finite and is realized by a smooth function \cite[Lemma 6.23 and 6.24]{CC07}.
We define the shrinker entropy\index{shrinker entropy} as
\begin{align*}\nu(g)&=\inf \left\{\mu(g,\tau)\mid\tau>0\right\}.
\end{align*}
Recall also the definition of Perelman's $\lambda$-functional
\begin{equation}\label{perelmanslambda}\lambda(g)=\inf_{\substack{f\in C^{\infty}(M)\\\int_M e^{-f}\dv_g=1}}\int_M(\scal_g+|\nabla f|^2_g)e^{-f}\dv_g.\end{equation}
If $\lambda(g)>0$,
 then $\nu(g)$ is finite and realized by some $\tau_g>0$ (see \cite[Corollary 6.34]{CC07}).
In this case, a pair $(f_g,\tau_g)$ realizing $\nu(g)$ satisfies the equations\index{$f_g$, minimizer realizing $\nu(g)$}\index{$\tau_g$, minimizer realizing $\nu(g)$}
\begin{align}\label{nueulerlagrange}\tau(2\Delta f+|\nabla f|^2-\scal)-f+n+\nu&=0,\\
\label{nueulerlagrange1.5}\frac{1}{(4\pi\tau)^{n/2}}\int_M f e^{-f}\dv&=\frac{n}{2}+\nu,
\end{align}
see e.g. \cite[p.\ 5]{CZ12}. On the other hand, $\nu(g)$ is not finite if $\lambda(g)<0$ \cite[p.\ 244]{CC07}.
\begin{rem}Note that $\mathcal{W}$ posseses the symmetries $\mathcal{W}(\varphi^*g,\varphi^*f,\tau)=\mathcal{W}(g,f,\tau)$ for $\varphi\in\Diff(M)$
and $\mathcal{W}(\alpha g,f,\alpha \tau)=\mathcal{W}(g,f,\tau)$ for $\alpha>0$. Therefore,
$\nu(g)=\nu(\alpha \cdot\varphi^{*}g)$ for any $\varphi\in\Diff(M)$ and $\alpha>0$.
\end{rem}
\begin{rem}\label{lsc}The shrinker entropy is upper semicontinuous with respect to the $C^2$-topology if defined. Let $g$ be fixed and $(f_g,\tau_g)$ be a minimizing pair.
 Let $g_i\to g$ in $C^2$ and $v_{i}\in C^{\infty}(M)$ such that $e^{-v_i}\dv_{g_i}=e^{-f_{g}}\dv_g$. Then,
\begin{align*}
 \nu(g)=\mathcal{W}(g,f_g,\tau_g)=\lim_{i\to\infty}\mathcal{W}(g_i,v_i,\tau_g)\geq\limsup_{i\to\infty}\nu(g_i).
\end{align*}

\end{rem}

\begin{prop}[First variation of $\nu$]\label{firstnu}\index{first variation!of $\nu$}Let $(M,g)$ be a Riemannian manifold. Then the first variation of $\nu$ is given by
\begin{align*}\nu(g)'(h)=-\frac{1}{(4\pi\tau_g)^{n/2}}\int_M\left\langle \tau_g(\ric+\nabla^2 f_g)-\frac{1}{2}g,h\right\rangle e^{-f_g}\dv_g,
\end{align*}
where $(f_g,\tau_g)$ realizes $\nu(g)$.
\end{prop}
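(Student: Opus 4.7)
The strategy is an envelope argument: since $(f_g,\tau_g)$ is a critical point of the constrained functional $\mathcal{W}(g,\cdot,\cdot)$, only the explicit $g$-dependence should contribute to $\nu'(g)(h)$. To avoid delicate issues about the smooth dependence of the minimizer on $g$, I would exploit a one-sided comparison. For $g_s=g+sh$, choose a smooth family $v_s\in C^\infty(M)$ with $v_0=f_g$ and $e^{-v_s}\dv_{g_s}=e^{-f_g}\dv_g$; differentiating this constraint at $s=0$ gives $\dot v_0=\tfrac12\mathrm{tr}_g h$. By construction $(v_s,\tau_g)$ is admissible for $\nu(g_s)$, hence
\[
\nu(g_s)\leq \mathcal{W}(g_s,v_s,\tau_g),
\]
with equality at $s=0$. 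The Euler--Lagrange system \eqref{nueulerlagrange}--\eqref{nueulerlagrange1.5} is non-degenerate (modulo the constants absorbed into $\nu$), so the implicit function theorem yields smooth dependence $g\mapsto(f_g,\tau_g)$ and hence differentiability of $\nu$. Running the comparison above with $\pm h$ then forces
\[
\nu'(g)(h)=\frac{d}{ds}\bigg|_{s=0}\mathcal{W}(g_s,v_s,\tau_g).
\]

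The right-hand side is now a direct calculation, since the weight $e^{-f_g}\dv_g$ has been frozen. The ingredients are the pointwise derivatives
\begin{align*}
\partial_s v_s\bigr|_{s=0} &= \tfrac12\,\mathrm{tr}_g h,\\
\partial_s|\nabla v_s|^2_{g_s}\bigr|_{s=0} &= -h(\nabla f_g,\nabla f_g)+\langle \nabla \mathrm{tr}_g h,\nabla f_g\rangle,\\
\partial_s\scal_{g_s}\bigr|_{s=0} &= \Delta(\mathrm{tr}_g h)+\delta(\delta h)-\langle \ric_g,h\rangle,
\end{align*}
with signs adapted to the paper's conventions. Substituting these into the integrand of $\mathcal{W}(g_s,v_s,\tau_g)$ and integrating by parts via the weighted adjoints $\delta_{f_g}$ and $\Delta_{f_g}$ converts each differential term into a pointwise contraction with $h$. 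The $\delta(\delta h)$ piece, paired against $e^{-f_g}\dv_g$, produces the tensor $\ric_g+\nabla^2 f_g$ via the weighted contracted second Bianchi identity; the $\Delta(\mathrm{tr}_g h)$ contribution, the cross term $\langle\nabla \mathrm{tr}_g h,\nabla f_g\rangle$, and the $\tfrac12\mathrm{tr}_g h$ produced by $\partial_s v_s$ are collapsed using the Euler--Lagrange equation \eqref{nueulerlagrange}, while \eqref{nueulerlagrange1.5} removes any leftover multiple of $\int(\mathrm{tr}_g h)e^{-f_g}\dv_g$. The integrand then reduces to $-\bigl\langle\tau_g(\ric_g+\nabla^2 f_g)-\tfrac12 g,\,h\bigr\rangle e^{-f_g}$, which is the stated formula.

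The main obstacle is the bookkeeping in the last step: one must track signs and partial cancellations so that the terms involving $\mathrm{tr}_g h$, $\scal_g$, and $|\nabla f_g|^2$ line up to produce exactly the symmetric tensor $\tau_g(\ric_g+\nabla^2 f_g)-\tfrac12 g$ without spurious constants or gradients. Everything else is routine manipulation with the Bakry--Émery Laplacian and weighted divergence introduced in the notation section.
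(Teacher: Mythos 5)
Your argument is essentially the proof of the cited reference \cite[Lemma 2.2]{CZ12} (the paper itself gives no proof beyond that citation): freeze the measure $e^{-f_g}\dv_g$ and the scale $\tau_g$, use the admissibility of $(v_s,\tau_g)$ as a one-sided comparison, and reduce to Perelman's first variation of $\mathcal{W}$, which is exactly the standard route. Two small remarks: the final cancellation does not actually need \eqref{nueulerlagrange} or \eqref{nueulerlagrange1.5} --- once the measure is frozen, the terms $\Delta(\trace h)$ and $\langle\nabla\trace h,\nabla f_g\rangle$ cancel each other upon integration against $e^{-f_g}\dv_g$, and $\delta(\delta h)$ alone produces $h(\nabla f_g,\nabla f_g)-\langle\nabla^2 f_g,h\rangle$; and your appeal to non-degeneracy of the Euler--Lagrange system to get differentiability of $\nu$ is only justified near a gradient shrinking soliton (where the paper proves it in Lemma \ref{analyticnu} using the Futaki--Sano spectral gap and $\scal>0$), not at an arbitrary metric where minimizers need not be unique --- but this matches the level of rigor of the statement as given.
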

 \begin{proof}See e.g.\ \cite[Lemma 2.2]{CZ12}. 
 \end{proof}
\noindent
By scale and diffeomorphism invariance, it is easy to see that $\nu$ is nondecreasing under the Ricci flow and that $\nu$ stays constant if and only if
\begin{align}\label{Riccisoliton}
 \ric+\nabla^2 f_g=\frac{1}{2\tau_g}g,
\end{align}
i.e.\ if $g$ is a critical point of $\nu$. Observe that in this case, the pair $(f_g,\tau_g)$ realizing $\nu(g)$ is unique.
\begin{rem}
 A metric $g$ is a critical point of $\nu$ if and only if $g$ is a gradient shrinking Ricci soliton. Suppose that \eqref{Riccisoliton} holds, then $g$ is a gradient shrinking Ricci soliton since
$\nabla^2 f_g=\frac{1}{2}L_{\gradient f_g}g$ and $\frac{1}{2\tau_g}>0$. 
Conversely, any gradient shrinking Ricci soliton $g$ has positive scalar curvature \cite[Proposition 1]{Ive93}.
Therefore, $\nu(g)$ is finite, since $\lambda(g)\geq\min\scal_g>0$. Since $\nu$ stays constant along the flow starting at $g$, we necessarily have \eqref{Riccisoliton}.
\end{rem}
\begin{prop}[Second variation of $\nu$]\label{secondnu}Let $(M,g)$ be a gradient shrinking Ricci soliton. Then the second variation of $\nu$ at $g$ is given by
 \begin{align}
  \nu''_{g}(h)=\frac{\tau}{(4\pi\tau)^{n/2}}\int_M \langle N h,h\rangle e^{-f}\dv
 \end{align}
where $(f,\tau)$ is the minimizing pair realizing $\nu$. The stability operator $N$ is given by
\begin{align}
 Nh=-\frac{1}{2}\Delta_f h+\mathring{R}h+\delta^*_f(\delta_f(h))+\frac{1}{2}\nabla^2 v_h-\ric \frac{\int_M \langle \ric,h\rangle e^{-f}\dv}{\int_M \scal e^{-f}\dv}.
\end{align}
Here, $\mathring{R}h(X,Y)=\sum_{i=1}^nh(R_{e_i,X}Y,e_i)$ and $v_h$ is the unique solution of
\begin{align}\label{v_h}
 (-\Delta_f+\frac{1}{2\tau})v_h=\delta_f(\delta_f(h)).
\end{align}
\end{prop}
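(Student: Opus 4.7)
The starting point is the first variation formula of Proposition \ref{firstnu}. Let $g(s)$ be a smooth path of metrics with $g(0) = g$ and $\dot g(0) = h$, and write $f(s) = f_{g(s)}$, $\tau(s) = \tau_{g(s)}$. Since $g$ is a gradient shrinking soliton, the soliton equation \eqref{Riccisoliton} gives $\tau(\ric + \nabla^2 f) - \tfrac{1}{2} g = 0$. Therefore, when we differentiate the first variation once more at $s=0$, the variation of the prefactor $(4\pi\tau)^{-n/2}$ and of the weighted measure $e^{-f}\dv$ gets contracted with a vanishing tensor and drops out. Only differentiation of the Euler--Lagrange expression itself survives, giving
\begin{equation*}
\nu''_g(h) = -\frac{1}{(4\pi\tau)^{n/2}} \int_M \Big\langle \tfrac{d}{ds}\Big|_{s=0}\!\big[\tau(s)(\ric_{g(s)} + \nabla^2_{g(s)} f(s)) - \tfrac{1}{2} g(s)\big], h \Big\rangle e^{-f} \dv.
\end{equation*}

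The next step is to compute each ingredient. Using the standard variation formula for $\ric$, the term $\dot\ric(h)$ produces the rough Laplacian acting on $h$, the curvature term $\mathring R h$, and terms involving $\delta$ and $\trace_g h$. The variation of the Hessian splits as $\nabla^2 \dot f$ plus a correction from the Christoffel symbols acting on $df$, which can be rewritten via the soliton equation to convert $\nabla f$-terms into Ricci and metric pieces. To identify $\dot f$ and $\dot\tau$ we differentiate the two defining constraints: the normalization $(4\pi\tau)^{-n/2}\int_M e^{-f}\dv = 1$ and the Euler--Lagrange equation \eqref{nueulerlagrange}. Taking $\delta_f$ of the latter, using the contracted second Bianchi identity and the soliton equation, one shows that the appropriate linear combination of $\dot f$ and $\dot\tau$ satisfies precisely \eqref{v_h}, so this combination may be identified with $v_h$. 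The operator $-\Delta_f + \tfrac{1}{2\tau}$ is invertible modulo constants since $\lambda(g) > 0$ for any gradient shrinking soliton, which makes $v_h$ well-defined.

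Substituting these variations back and using the soliton equation repeatedly to replace Hessians of $f$ by $\tfrac{1}{2\tau}g - \ric$, the raw expression reorganizes into the four contributions that define $N$. The combination $-\tfrac12 \Delta_f h + \mathring R h$ arises from $\dot\ric$ together with the conversion of $\nabla f$-weights into the $f$-weighted Laplacian via integration by parts against $e^{-f}\dv$; the gauge piece $\delta^*_f \delta_f h$ comes from the $\delta^*\delta h$ term in $\dot\ric$, symmetrized by the weighted adjoint structure and by the diffeomorphism invariance of $\nu$; the term $\tfrac12 \nabla^2 v_h$ captures $\nabla^2 \dot f$; and the Ricci correction $-\ric\,\int\!\langle\ric,h\rangle e^{-f}\dv \,/\int \scal\, e^{-f}\dv$ is produced by $\dot\tau$, whose value is fixed by combining the $s$-derivative of the normalization with the Euler--Lagrange equation for $\tau$ (which encodes \eqref{nueulerlagrange1.5}).

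The main obstacle is the bookkeeping in the last step: one has to interchange covariant derivatives, apply the soliton identity many times to eliminate $\nabla^2 f$ and $|\nabla f|^2$, and perform several integrations by parts against the weighted measure so that $\delta_f$ and $\Delta_f$ (rather than $\delta$ and $\Delta$) appear naturally. A useful consistency check throughout is that $\nu''_g$ must be symmetric as a bilinear form in $h$, equivalently $N$ must be self-adjoint with respect to the weighted $L^2$-inner product; verifying self-adjointness of the final operator provides a safeguard against sign errors in the variational computations.
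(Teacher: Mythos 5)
First, a point of reference: the paper does not prove this proposition itself --- its ``proof'' is a one-line citation to Cao--Zhu \cite[Theorem 1.1]{CZ12}. Your outline follows the same architecture as that computation (differentiate the first variation at a critical point so that only the linearization of $\tau(\ric+\nabla^2 f)-\frac12 g$ survives, then determine $\ric'(h)$, the variation of the Hessian, and $\dot f$, $\dot\tau$ from the constraints), and the opening observation that the derivatives of the measure, prefactor and contraction all die against the vanishing soliton tensor is correct. However, one justification is simply wrong: you assert that $-\Delta_f+\frac{1}{2\tau}$ is ``invertible modulo constants since $\lambda(g)>0$''. Positivity of Perelman's $\lambda$-functional has no bearing on whether $\frac{1}{2\tau}$ is an eigenvalue of $\Delta_f$. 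The correct input, which the paper itself invokes in the proof of Lemma \ref{analyticnu}, is the spectral gap of Futaki--Sano \cite[Proposition 3.1]{FS13}: the smallest nonzero eigenvalue of $\Delta_f$ on a compact shrinker is strictly greater than $\frac{1}{2\tau}$, and since $-\Delta_f+\frac{1}{2\tau}$ acts on constants as multiplication by $\frac{1}{2\tau}>0$, the operator is invertible outright, not merely modulo constants. Without this, the well-definedness of $v_h$ in \eqref{v_h} is unsupported.

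The more substantial problem is that the entire quantitative content is compressed into ``the raw expression reorganizes into the four contributions'', and at least two of the steps hidden there are not routine. (i) The $\dot\tau$-contribution to the linearized soliton tensor is $\dot\tau(\ric+\nabla^2 f)=\frac{\dot\tau}{2\tau}g$, a multiple of $g$, whereas the last term of $N$ is a multiple of $\ric$; you must actually solve the coupled linear system obtained by differentiating \eqref{nueulerlagrange} and \eqref{nueulerlagrange1.5} to get $\dot\tau$ explicitly (this is where the ratio $\int_M\langle\ric,h\rangle e^{-f}\dv\,/\int_M\scal\, e^{-f}\dv$ originates), and then trade the resulting $g$- and $\nabla^2 f$-pieces for $\ric$ plus a Hessian that is absorbed into $\frac12\nabla^2 v_h$. (ii) The term $-\frac12\nabla^2\trace h$ in the standard formula for $\ric'(h)$ never appears in your accounting; it has to be combined with $\nabla^2\dot f$ and the Christoffel correction before one can identify the total Hessian contribution with $\frac12\nabla^2 v_h$, and verifying that the resulting function satisfies precisely \eqref{v_h} (using $\delta_f\ric=0$ from \cite[Lemma 3.1]{CZ12} and the soliton equation) is the heart of the argument. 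As a roadmap to the Cao--Zhu derivation your proposal is serviceable, but as written it defers exactly the steps where errors would occur, and the self-adjointness check you propose, while a good safeguard, cannot certify the coefficients of the $v_h$- and $\ric$-terms.
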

\begin{proof}
 See \cite[Theorem 1.1]{CZ12}.
\end{proof}
\begin{rem}\label{decompremark}In the following, we explain how this variational formula can be substantially simplified.
 The operator $N$ is formally self-adjoint with respect to the weighted measure because
\begin{align*}
 \int_M \langle \nabla^2 v_h,k\rangle e^{-f}\dv=\int_M \langle v_h,\delta_f(\delta_f(k))\rangle e^{-f}\dv=\int_M \langle v_h,(-\Delta_f+\frac{1}{2\tau})v_k\rangle e^{-f}\dv
\end{align*}
and the right hand side is symmetric in $h$ and $k$. The other summands of $N$ are clearly self-adjoint. Therefore,
\begin{align*}
 \frac{d^2}{dsdt}\bigg\vert_{s,t=0}\nu(g+th+sk)=\frac{\tau}{(4\pi\tau)^{n/2}}\int_M \langle N h,k\rangle e^{-f}\dv.
\end{align*}
By scale and diffeomorphism invariance of $\nu$, we have, for an arbitrary $k\in C^{\infty}(S^2M)$,
\begin{align*}
 \frac{\tau}{(4\pi\tau)^{n/2}}\int_M \langle N h,k\rangle e^{-f}\dv=0,\qquad\text{ if }h\in \R\cdot g\oplus \delta_f^*(\Omega^1(M))=\R\cdot \ric\oplus \delta_f^*(\Omega^1(M)).
\end{align*}
The equality of the direct sums holds because $g$ is a Ricci soliton.
Thus $\nu''$ is only nontrivial on the orthogonal component of $\ric\cdot g\oplus \delta_f^*(\Omega^1(M))$, given by
\begin{align*}
 V=\left\{h\in \Gamma(S^2M)\text{ }\bigg\vert \text{ }\delta_f(h)=0 \text{ and }\int_M \langle\ric,h\rangle e^{-f}\dv=0\right\},
\end{align*}
and the stability operator $N|_{V}:V\to V$ is of the form
\begin{align*}
 N=-\frac{1}{2}\Delta_f+\mathring{R}.
\end{align*}
We also have $\delta_f\ric=0$ \cite[Lemma 3.1]{CZ12} and thus, $\delta_f^{-1}(0)=\R\cdot\ric\oplus V$.
\end{rem}
\begin{defn}
 Let $(M,g)$ be a Ricci soliton. We call the soliton linearly stable if all eigenvalues of $N$ are nonpositive and linearly unstable otherwise. If $N$ is nonpositive and $\kernel N\cap V\neq \left\{0\right\}$, we call the soliton
neutrally linearly stable.
\end{defn}
\begin{rem}\label{linstabilityremark}The round sphere is linearly stable; the complex projective space with the Fubini-Study metric is neutrally linearly stable. 
Any product of positive Einstein manifolds is unstable. Some more examples are discussed in \cite{CHI04,CH13,HHM14}. 
It is conjectured that all compact $4$-dimensional nontrivial Ricci solitons are linearly unstable \cite[p.\ 29]{Cao10}. Due to a result by Hall and Murphy \cite{HM11},
any k\"ahlerian shrinking Ricci soliton is linearly unstable if $\dimn H^{1,1}(M)>1$. This results applies to all known nontrivial Ricci solitons in any dimension.
\end{rem}

\section{Analyticity and a Lojasiewicz-Simon inequality}
A necessary tool in proving our stability and instability theorems is a Lojasiewicz-Simon inequality for $\nu$ which will be the main theorem of this section.
To prove this, $\nu$ needs to be analytic. This is the content of the following
\begin{lem}\label{analyticnu}Let $(M,g_0)$ be a gradient shrinking Ricci soliton. Then there exists a $C^{2,\alpha}$-neighbourhood $\mathcal{U}$ of $g_0$ in the space of metrics such 
that the minimizing pair $(f_g,\tau_g)$ realizing $\nu(g)$ is unique and depends analytically\index{analytic} on the metric. Moreover, the map $g\mapsto \nu(g)$ is analytic on $\mathcal{U}$.
\end{lem}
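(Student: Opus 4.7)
The plan is to apply the analytic implicit function theorem in Banach spaces to the Euler--Lagrange system characterizing the minimizing triple $(f_g,\tau_g,\nu(g))$ of $\mathcal{W}(g,\cdot,\cdot)$ under the normalization constraint. At the soliton $g_0$, equations \eqref{nueulerlagrange} and \eqref{nueulerlagrange1.5} together with the normalization determine $(f_{g_0},\tau_{g_0})$ uniquely via the soliton identity \eqref{Riccisoliton}, so the IFT will produce a unique analytic extension to metrics close to $g_0$ in $C^{2,\alpha}$.

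Concretely, I introduce the map
\[
F:\mathcal{U}\times C^{2,\alpha}(M)\times\R_{>0}\times\R\to C^{0,\alpha}(M)\times\R\times\R
\]
whose three components are the left-hand side of \eqref{nueulerlagrange}, the normalization $(4\pi\tau)^{-n/2}\int_M e^{-f}\,dV_g-1$, and the $\tau$-optimality equation \eqref{nueulerlagrange1.5} rewritten as $(4\pi\tau)^{-n/2}\int_M fe^{-f}\,dV_g-n/2-\nu$. Its zeros are precisely the candidate minimizing triples. Real-analyticity of $F$ in all variables follows because the assignment $g\mapsto(g^{-1},\scal_g,dV_g)$ is analytic on a $C^{2,\alpha}$-neighbourhood of $g_0$ (matrix inversion is analytic where $g$ is positive definite, and the remaining ingredients are polynomial in the components of $g$, $g^{-1}$ and the first two derivatives of $g$), while $f$ enters the nonlinearities only through the analytic maps $f\mapsto e^{-f}$ and $f\mapsto fe^{-f}$, in addition to the linear quantities $\Delta_g f$ and $|\nabla f|_g^2$.

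The crucial step, which is the main obstacle, is to verify that the partial derivative $D_{(f,\tau,\nu)}F$ at $(g_0,f_{g_0},\tau_{g_0},\nu(g_0))$ is an isomorphism from $C^{2,\alpha}(M)\times\R\times\R$ to $C^{0,\alpha}(M)\times\R\times\R$. Using the soliton identities, the linearization acts on $\dot f$ by the elliptic, formally self-adjoint operator $2\tau_0\Delta_{f_0}-1$ (with respect to the weighted measure $e^{-f_0}\,dV_{g_0}$), coupled via integral and rank-one terms to the scalar unknowns $(\dot\tau,\dot\nu)$ arising from the two scalar equations. Since the principal operator is Fredholm of index zero between the relevant Hölder spaces, it suffices to rule out nontrivial kernel elements of the full system. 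Any such element would correspond to a first-order nontrivial deformation of the minimizing triple at fixed $g_0$, and must be excluded using both the uniqueness of the minimizer (cf.\ \cite[Lemmas 6.23, 6.24]{CC07}) and the non-degeneracy of the critical point. This ultimately amounts to showing that $\tfrac{1}{2\tau_0}$ is not a Bakry--Emery eigenvalue of $\Delta_{f_0}$ in the subspace transversal to the $\tau$- and $\nu$-directions; here the positivity of $\scal_{g_0}$, which guarantees finiteness and attainment of $\nu(g_0)$, plays an essential role.

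Granting the isomorphism, the analytic IFT produces a $C^{2,\alpha}$-neighbourhood $\mathcal{U}$ of $g_0$ and an analytic map $g\mapsto(f_g,\tau_g,\nu(g))$ with $F\equiv 0$. By the upper semicontinuity of Remark \ref{lsc}, for $g$ sufficiently close to $g_0$ the true minimizer of $\mathcal{W}(g,\cdot,\cdot)$ must lie near $(f_{g_0},\tau_{g_0})$ and hence coincides with the IFT-produced solution, yielding both uniqueness of the minimizing pair and its analytic dependence on $g$. Finally, $\nu(g)=\mathcal{W}(g,f_g,\tau_g)$ is analytic as a composition of analytic maps.
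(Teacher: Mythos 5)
Your overall strategy --- the analytic implicit function theorem applied to the Euler--Lagrange system \eqref{nueulerlagrange}, \eqref{nueulerlagrange1.5} together with the normalization --- is the same as the paper's, and your setup of the map $F$ and the analyticity of its ingredients is fine. However, there are two genuine gaps at exactly the two places where the real work lies.

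First, you do not actually verify that $D_{(f,\tau,\nu)}F$ is an isomorphism; you correctly reduce the question to whether $\tfrac{1}{2\tau_0}$ can be an eigenvalue of $\Delta_{f_0}$ on a suitable complement, but then propose to exclude kernel elements ``using the uniqueness of the minimizer and the non-degeneracy of the critical point.'' This is not an argument: a unique minimizer can perfectly well be a degenerate critical point of the constrained functional, and \cite[Lemmas 6.23, 6.24]{CC07} give existence of a smooth minimizer, not uniqueness or non-degeneracy. The paper's proof needs two concrete inputs here that your sketch is missing: the spectral gap of Futaki--Sano \cite[Proposition 3.1]{FS13}, namely that the smallest nonzero eigenvalue of $\Delta_{f_{g_0}}$ is strictly larger than $\tfrac{1}{2\tau_{g_0}}$ (this makes $2\tau_{g_0}\Delta_{f_{g_0}}-1$ invertible on the complement of the span of constants and of $F_{g_0}=f_{g_0}-\tfrac{n}{2}-\nu(g_0)$, which is itself an eigenfunction with eigenvalue $\tfrac{1}{\tau_{g_0}}$); and an explicit computation of the remaining finite-dimensional block, whose determinant turns out to be $-\frac{1}{(4\pi\tau_{g_0})^{n/2}}\int_M\scal\,e^{-f_{g_0}}\dv<0$ by positivity of the scalar curvature. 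Without the Futaki--Sano gap your isomorphism claim is unsupported.

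Second, your final step --- that upper semicontinuity of $\nu$ forces the true minimizer of $\mathcal{W}(g,\cdot,\cdot)$ to lie near $(f_{g_0},\tau_{g_0})$, hence to coincide with the IFT branch --- asserts precisely the hard part. Upper semicontinuity controls the \emph{value} $\nu(g)$, not the location of minimizers; a priori, for $g_i\to g_0$ the minimizers $(f_{g_i},\tau_{g_i})$ could have $\tau_{g_i}\to 0$ or $\infty$, or $f_{g_i}$ could degenerate. The paper closes this by a compactness argument: two-sided bounds on $\tau_{g_i}$ via the lower estimate $\mu(g,\tau)\geq(\tau-1)\lambda(g)-\tfrac{n}{2}\log\tau-C$ of \cite[Lemma 6.30]{CC07} (using $\lambda(g)>0$ near the soliton), an $H^1$ bound on $w_{g_i}=e^{-f_{g_i}/2}$ via Jensen and Sobolev inequalities, and an elliptic bootstrap of the Euler--Lagrange equation to obtain $C^{2,\alpha}$ precompactness, after which the limit must be $(f_{g_0},\tau_{g_0})$ by uniqueness at $g_0$ and the IFT's local uniqueness applies. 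You need to supply this chain of a priori estimates; as written, the identification of the analytic branch with the actual minimizing pair is unjustified.
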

\begin{rem}Previously, analytic dependence of $\mu(g,1/2)$ on $g$ in the neighbourhood of a Ricci soliton was shown in \cite[Lemma 2.2]{SW13}.
 The proof presented here turns out to be similar but some subtleties occur due to the presence of the scale parameter $\tau$.
\end{rem}

\begin{proof}[Proof of Lemma \ref{analyticnu}]We use the implicit function theorem for Banach manifolds. We define a map $H(g,f,\tau)=\tau(2\Delta f+|\nabla f|^2-\scal)-f+n$.
Let
$$C^{k,\alpha}_{g_0}(M)=\left\{u\in C^{k,\alpha}(M)\hspace{1mm}\bigg\vert\hspace{1mm}\int_M u e^{-f_{g_0}}\dv_{g_0}=0\right\}$$
and let $\mathcal{M}^{2,\alpha}$ be the Banach manifold of $C^{2,\alpha}$-metrics. 
Define
 \begin{align*}L\colon\mathcal{M}^{2,\alpha}\times C^{2,\alpha}(M)\times\R_+&\to C^{0,\alpha}_{g_0}(M)\times\R\times\R,\\
                (g,f,\tau)&\mapsto(L_1,L_2,L_3),
\end{align*}
where the three components are given by
\begin{align*}L_1(g,f,\tau)&=H(g,f,\tau)-\frac{1}{(4\pi\tau_{g_0})^{n/2}}\int_MH(g,f,\tau)e^{-f_{g_0}}\dv_{g_0},\\
              L_2(g,f,\tau)&=\frac{1}{(4\pi\tau)^{n/2}}\int_M fe^{-f}\dv_g-\frac{n}{2}+\frac{1}{(4\pi\tau)^{n/2}}\int_M H(g,f,\tau)e^{-f_g}\dv_g,\\
              L_3(g,f,\tau)&=\frac{1}{(4\pi\tau)^{n/2}}\int_M e^{-f}\dv_g-1.
\end{align*}
This is an analytic map between Banach manifolds.
We have $L(g,f,\tau)=(0,0,0)$ if and only if there exists a constant $c\in\R$ such that the set of equations 
\begin{align}\tau(2\Delta f+|\nabla f|^2-\scal)-f+n&=c,\\
              \frac{1}{(4\pi\tau)^{n/2}}\int_M fe^{-f}\dv-\frac{n}{2}&=-c,\\
                 \frac{1}{(4\pi\tau)^{n/2}}\int_M e^{-f}\dv&=1
\end{align}
is satisfied. Now we compute the differential of $L$ at $(g_0,f_{g_0},\tau_{g_0})$ restricted to $R=C^{2,\alpha}(M)\times\R$.
Let $F_{g_0}=f_{g_0}-\frac{n}{2}-\nu(g_0)$ and
\begin{align*}V=\left\{u\in C^{2,\alpha}_{g_0}(M)\hspace{1mm}
\bigg\vert\hspace{1mm} \int_M u\cdot F_{g_0}e^{-f_{g_0}}\dv_{g_0}=0\right\},\\
W=\left\{u\in C^{0,\alpha}_{g_0}(M)\hspace{1mm}
\bigg\vert\hspace{1mm} \int_M u\cdot F_{g_0} e^{-f_{g_0}}\dv_{g_0}=0\right\}.
\end{align*}
By \eqref{nueulerlagrange1.5} and smoothness of $f_{g_0}$, we have $F_{g_0}\in C^{k,\alpha}_{g_0}(M)$ for all $k\geq0$. Moreover, we have the decompositions
\begin{align*}
  C^{2,\alpha}(M)&\cong V\oplus \spam(F_{g_0})\oplus \R,\\
  C^{0,\alpha}_{g_0}(M)&\cong W\oplus \spam(F_{g_0}),
 \end{align*}
where the last factor in the first decomposition represents the constant functions. Consider the differential of $L$ restricted to $R$ as a linear map
\begin{align*}dL_{(g_0,f_{g_0},\tau_{g_0})}\big\vert_{R}: V\oplus\spam(F_{g_0})\oplus \R\oplus\R\to W\oplus \spam(F_{g_0})\oplus \R\oplus\R.
 \end{align*}
Straightforward calculations, using the Euler-Lagrange equations \eqref{nueulerlagrange} and \eqref{nueulerlagrange1.5}, show that it is equal to
\begin{align*}
 dL_{(g_0,f_{g_0},\tau_{g_0})}\big\vert_{R}=\begin{pmatrix}2\tau_{g_0}\Delta_{f_{g_0}}-1 & 0 & 0 & 0\\
0 & 1 & 0 & \frac{1}{\tau_{g_0}}\\
 0 & -\left\| F_{g_0}\right\|^2_{L^2(v\dv)} &-\frac{n}{2}-\nu(g_0) & -\frac{n}{2\tau_{g_0}}(\frac{n}{2}+\nu(g_0)+1)\\
0 & 0 & -1 & -\frac{n}{2\tau_{g_0}}
  \end{pmatrix},
\end{align*}
where $L^2(v\dv)$ is the $L^2$ norm with respect to the $v$-weighted measure, with $v=\frac{1}{(4\pi\tau_{g_0})^{n/2}}e^{-f_{g_0}}$.
In the following, we show that this map is an isomorphism. From \eqref{nueulerlagrange}, we conclude that $\Delta_{f_{g_0}}F_{g_0}=\frac{1}{\tau_{g_0}} F_{g_0}$. Thus, the map
\begin{align*}
 2\tau_{g_0}\Delta_{f_{g_0}}-1:V\to W
\end{align*}
is well defined. By \cite[Proposition 3.1]{FS13}, the smallest nonzero eigenvalue of $\Delta_{f_{g_0}}$ satisfies $\lambda>\frac{1}{2\tau_{g_0}}$.
This implies invertibility of the above map.
 It remains to consider the lower right $3\times3$-block which we denote by $A$. We have
\begin{align*}
 \det(A)=\frac{1}{\tau_{g_0}}\left(\left\|F_{g_0}\right\|_{L^2(v\dv)}^2-\frac{n}{2}\right).
\end{align*}
Since $F_{g_0}$ is an eigenfunction of the weighted Laplacian to the eigenvalue $\frac{1}{\tau_{g_0}}$,
\begin{align*}
 \left\|F_{g_0}\right\|_{L^2(v\dv)}^2=\tau_{g_0}(\Delta_f F_{g_0},F_{g_0})_{L^2(v\dv)}=\tau_{g_0}\left\|\nabla F_{g_0}\right\|_{L^2(v\dv)}^2=\tau_{g_0}\left\|\nabla f_{g_0}\right\|_{L^2(v\dv)}^2.
\end{align*}
By definition of $\nu(g_0)$,
\begin{align*}
 \tau_{g_0}\left\|\nabla f_{g_0}\right\|_{L^2(v\dv)}^2&=\frac{1}{(4\pi\tau_{g_0})^{n/2}}\int_M \tau_{g_0}|\nabla  f_{g_0}|^2e^{-f_{g_0}}\dv\\
      & =\nu(g_0)-\frac{1}{(4\pi\tau_{g_0})^{n/2}}\int_M [\tau_{g_0}\scal+f_{g_0}-n]e^{-f_{g_0}}\dv\\
      &=\frac{n}{2}-\frac{1}{(4\pi\tau_{g_0})^{n/2}}\int_M \tau_{g_0}\scal e^{-f_{g_0}}\dv.
\end{align*}
Therefore,
\begin{align*}
  \det(A)=-\frac{1}{(4\pi\tau_{g_0})^{n/2}}\int_M \scal \text{ }e^{-f_{g_0}}\dv<0
\end{align*}
because the scalar curvature of $g_0$ is positive \cite[Proposition 1]{Ive93}. In summary, we have shown that $dL_{(g_0,f_{g_0},\tau_{g_0})}|_{R}$ is invertible.
                             By the implicit function theorem\index{implicit function theorem} for Banach manifolds\index{Banach manifold}, there exists a neighbourhood $\mathcal{U}\subset \mathcal{M}^{{2,\alpha}}$ of $g_0$ and an analytic\index{analytic} map
$P\colon\mathcal{U}\to C^{2,\alpha}(M)\times\R_+$ such that $L(g,P(g))=0$. Moreover, there exists a neighbourhood $\mathcal{V}\subset C^{2,\alpha}(M)\times\R_+ $ such that 
for any $(g,f,\tau)\in\mathcal{U}\times\mathcal{V}$, we have
$L(g,f,\tau)=0$ if and only if $(f,\tau)=P(g)$.

Now we claim that on a smaller neighbourhood $\mathcal{U}_1\subset\mathcal{U}$, there is a unique pair of minimizers\index{minimizer} in the definition of $\nu$ and it is equal to $P(g)$. Suppose this is not the case.
Then there exist a sequence\index{sequence} $g_i$ of metrics such that $g_i\to g_0$ in $C^{2,\alpha}$ and pairs of minimizers $(f_{g_i},\tau_{g_i})$ such that $P(g_i)\neq (f_i,\tau_{g_i})$ for all $i\in\N$.
 By substituting $w_{g_i}^2=e^{-f_{g_i}}$, we see that the pair $(w_{g_i},\tau_{g_i})$ is a minimizer of the functional
\begin{align*}\tilde{\mathcal{W}}(g_i,w,\tau)=\frac{1}{(4\pi\tau)^{n/2}}\int_M [\tau(4|\nabla w|^2+\scal_gw^2)-\log (w^2)w^2-nw^2]\dv_{g_i}
\end{align*}
under the constraint $\frac{1}{(4\pi\tau)^{n/2}}\int_M w^2 \dv_{g_i}=1$. It satisfies the pair of equations
\begin{align}\label{nueulerlagrange0}-\tau_{g_i}(4\Delta w_{g_i}+\scal_{g_i}w_{g_i})-2\log(w_{g_i})w_{g_i}+nw_{g_i}+\nu(g_i)w_{g_i}&=0,\\
-\frac{1}{(4\pi\tau_{g_i})^{n/2}}\int_M w_{g_i}^2\log w_{g_i}^2 \dv_{g_i}&=\frac{n}{2}+\nu(g_i).
\end{align} 
By upper semicontinuity, $\nu(g_i)\leq C_1$.

Now we show that there exist constants $C_2,C_3>0$ such that $C_2\leq \tau_{g_i}\leq C_3$. Suppose this is not the case. By \cite[Lemma 6.30]{CC07},
we have a lower estimate
\begin{align*}\nu(g_i)=\mu(g_i,\tau_{g_i})&\geq (\tau_{g_i}-1)\lambda(g_i)-\frac{n}{2}\log\tau_{g_i}-C_4(g_i)
                                              \geq (\tau_{g_i}-1)C_5-\frac{n}{2}\log\tau_{g_i}-C_6.
\end{align*}
We have $C_5>0$ because $\lambda(g)$ is uniformly bounded on $\mathcal{U}$ and positive.
 The constant $C_4(g)$ depends on the Sobolev constant\index{Sobolev!constant} and the volume. Now if $\tau_{g_i}$ converges to $0$ or $\infty$, $\nu(g_i)$ diverges, which causes the contradiction. 
Observe that we also obtained a lower bound on $\nu(g_i)$.

Next, we show that $\left\|\nabla w_{g_i}\right\|_{L^2}$ is bounded. Choose $\epsilon>0$ so small that $2+2\epsilon\leq\frac{2n}{n-2}$.
By Jensen's inequality\index{Jensen's inequality} and the bounds on $\tau_{g_i}$,
\begin{align*}\int_M w_{g_i}^2\log w_{g_i}^2 \dv_{g_i}=&\frac{1}{\epsilon}\int_M w_{g_i}^2 \log w_{g_i}^{2\epsilon}\dv_{g_i}\\
                                                        \leq& \frac{1}{\epsilon}\left\| w_{g_i}\right\|^2_{L^2}\log\left(\frac{1}{\left\| w_{g_i}\right\|^2_{L^2}} \int_M w_{g_i}^{2+2\epsilon}\dv_{g_i}\right)\\
                                                                 =&\frac{1}{\epsilon}(4\pi\tau_{g_i})^{n/2}\log\left((4\pi\tau_{g_i})^{-n/2}\ \int_M w_{g_i}^{2+2\epsilon}\dv_{g_i}\right)\\
                                                                 \leq& C_7\log\left(\int_M w_{g_i}^{2+2\epsilon}\dv_{g_i}\right)+C_8.
\end{align*}
By the Sobolev inequality\index{Sobolev!inequality},
\begin{align*}\int_M w_{g_i}^{2+2\epsilon}\dv_{g_i}\leq& C_9(\left\|\nabla w_{g_i}\right\|^2_{L^2}+\left\|w_{g_i}\right\|^2_{L^2})^{1+\epsilon}
                                                     \leq C_9 (\left\|\nabla w_{g_i}\right\|^2_{L^2}+C_{10})^{1+\epsilon}.
\end{align*}
In summary, we have
\begin{align*}C_1&\geq \frac{1}{(4\pi\tau)^{n/2}}\int_M [\tau(4|\nabla w_{g_i}|^2+\scal_{g_i}w_{g_i}^2)-\log (w_{g_i}^2)w_{g_i}^2-nw^2]\dv_{g_i}\\
                 &\geq C_{11}\left\|\nabla w_{g_i}\right\|^2_{L^2}-C_{12}\log(\left\|\nabla w_{g_i}\right\|^2_{L^2}+C_{10})-C_{13},
\end{align*}
which shows that $\left\|\nabla w_{g_i}\right\|_{L^2}$ is bounded.

Now we continue with a bootstrap\index{bootstrap} argument. By Sobolev embedding, the bound on $\left\| w_{g_i}\right\|_{H^1}$ implies a bound on $\left\| w_{g_i}\right\|_{L^{2n/(n-2)}}$.
Let $p=2n/(n-2)$ and choose some $q$ slightly smaller than $p$. By elliptic regularity\index{elliptic regularity} and (\ref{nueulerlagrange0}),
\begin{align*}\left\|w_{g_i}\right\|_{W^{2,q}}\leq C_{14}(\left\|w_{g_i}\log w_{g_i}\right\|_{L^q}+\left\|w_{g_i}\right\|_{L^q}).
\end{align*}
Since for any $\beta>1$, $|x\log x|\leq |x|^{\beta}$ for $|x|$ large enough, we have
\begin{align*}\left\|w_{g_i}\log w_{g_i}\right\|_{L^q}\leq C_{15}(\volume(M,g_i))+\left\|w_{g_i}\right\|_{L^p}\leq C_{16}+\left\|w_{g_i}\right\|_{L^p}.
\end{align*}
Thus, $\left\|w_{g_i}\right\|_{W^{2,q}}\leq C(q)$. Using Sobolev embedding\index{Sobolev!embedding}, we obtain bounds on $\left\|w_{g_i}\right\|_{L^{p'}}$ for some $p'>p$. 
From (\ref{nueulerlagrange0}) again, we have bounds on $\left\|w_{g_i}\right\|_{W^{2,q'}}$ for any $q'<p'$.
Using these arguments repetitively, we obtain
$\left\|w_{g_i}\right\|_{W^{2,q}}\leq C(q)$ for all $q\in (1,\infty)$. Again by elliptic regularity\index{elliptic regularity},
\begin{align*}\left\|w_{g_i}\right\|_{C^{2,\alpha}}&\leq C_{17}(\left\|w_{g_i}\log w_{g_i}\right\|_{C^{0,\alpha}}+\left\| w_{g_i}\right\|_{C^{0,\alpha}})
                                                   \leq C_{18}((\left\|w_{g_i}\right\|_{C^{0,\alpha}})^{\gamma}+\left\| w_{g_i}\right\|_{C^{0,\alpha}})
\end{align*}
for some $\gamma>1$. For some sufficiently large $q$, we have, by Sobolev embedding\index{Sobolev!embedding},
\begin{align*}\left\|w_{g_i}\right\|_{C^{0,\alpha}}\leq C_{19}\left\|w_{g_i}\right\|_{W^{1,q}}\leq C_{19}\cdot C(q).
\end{align*}
We finally obtained an upper bound on $\left\|w_{g_i}\right\|_{C^{2,\alpha}}$. Thus there exists a subsequence, again denoted by $(w_{g_i},\tau_{g_i})$, which converges in $C^{2,\alpha'}$, $\alpha'<\alpha$, to some limit $(w_{\infty},\tau_{\infty})$. By Remark
\ref{lsc},
\begin{align*}\nu(g_0)\geq\lim_{i\to\infty} \nu(g_i)=\lim_{i\to\infty} \tilde{\mathcal{W}}(g_i,w_{g_i},\tau_{g_i})=\tilde{\mathcal{W}}(g_0,w_{\infty},\tau_{\infty})\geq \nu(g_0),
\end{align*}
and therefore, $(w_{\infty},\tau_{\infty})=(w_{g_0},\tau_{g_0})$ because the minimizing pair\index{minimizer} is unique at $g_0$. Moreover, by resubstituting, 
\begin{align*}(f_{g_i},\tau_{g_i})\to (f_{\infty},\tau_{\infty})=(f_{g_0},\tau_{g_0})
\end{align*} in $C^{2,\alpha'}$.
Because the pair $(f_{g_i},\tau_{g_i})$ satisfies \eqref{nueulerlagrange} and \eqref{nueulerlagrange1.5}, $L(g_i,f_{g_i},\tau_{g_i})=0$ and the implicit function argument from above implies that $P(g_i)=(f_{g_i},\tau_{g_i})$ for large $i$. This proves the claim by contradiction.
We also have shown that the constant $c$ appearing above is equal to $-\nu(g)$.
 Since the map $g\mapsto (f_g,\tau_g)$ is analytic\index{analytic}, the map
\begin{align*}g\mapsto \nu(g)=-\tau_g(2\Delta f_g+|\nabla f_g|^2-\scal_g)+f_g-n
\end{align*}
is also analytic. This proves the lemma.
\end{proof}
\begin{lem}\label{f',tau}Let $(M,g_0)$ be a gradient shrinking Ricci soliton. Then there exists a $C^{2,\alpha}$-neighbourhood
$\mathcal{U}$ of $g_0$ in the space of metrics and a constant $C>0$ such that
\begin{align*}
\left\|\frac{d}{dt}\bigg\vert_{t=0}f_{g+th}\right\|_{C^{2,\alpha}}&\leq C  \left\|h\right\|_{C^{2,\alpha}},\quad\left\|\frac{d}{dt}\bigg\vert_{t=0}f_{g+th}\right\|_{H^i}\leq C  \left\|h\right\|_{H^i},\quad i=0,1,2,\\
\left|\frac{d}{dt}\bigg\vert_{t=0}\tau_{g+th}\right|&\leq C  \left\|h\right\|_{L^2}
\end{align*}
for all $g\in\mathcal{U}$.
\end{lem}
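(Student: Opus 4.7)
The $C^{2,\alpha}$-bound is immediate from Lemma \ref{analyticnu}: the map $g\mapsto(f_g,\tau_g)$ is analytic as a map $\mathcal{U}\to C^{2,\alpha}(M)\times\R_+$, so its differential at any $g$ is a bounded linear operator whose norm depends continuously on $g$, and after possibly shrinking $\mathcal{U}$ we obtain the desired uniform bound.

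For the Sobolev and $\tau$-estimates, I would differentiate the full system defining $(f_g,\tau_g)$ from the proof of Lemma \ref{analyticnu}, namely the Euler--Lagrange equation \eqref{nueulerlagrange} together with the constraints $L_2=0$ and $L_3=0$, with respect to $g$ in the direction $h$. Writing $f',\tau',\nu'$ for the first variations of $f_{g+th},\tau_{g+th},\nu(g+th)$ at $t=0$ and using the soliton identity $\tau(2\Delta f+|\nabla f|^2-\scal)=f-n-\nu$ to simplify, the principal equation reads
\begin{equation*}
(2\tau\Delta_f-1)f' = -\tau\bigl(2(\Delta_g)'_h f - h(\nabla f,\nabla f)-\scal'_h\bigr)-\tfrac{f-n-\nu}{\tau}\,\tau' - \nu',
\end{equation*}
coupled with finitely many scalar integral equations that express $\tau'$ and the constants of $f'$ linearly in terms of $h$ and $f'$ (with no derivatives of $h$ appearing there). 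The quantity $\nu'$ is computed separately from Proposition \ref{firstnu}, which gives $|\nu'|\leq C\|h\|_{L^2}$ uniformly on $\mathcal{U}$. The invertibility of the full linearized system at $g_0$ was established in Lemma \ref{analyticnu}, and it extends to an isomorphism $2\tau\Delta_f-1\colon H^{k+2}\cap V\to H^k\cap W$ for every $k\geq 0$ via the spectral gap argument of \cite{FS13}.

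The $H^2$-bound on $f'$ then follows from standard elliptic regularity: the right-hand side above lies in $L^2$ with norm $\leq C(\|h\|_{H^2}+|\tau'|+|\nu'|)$, since the worst term is $\scal'_h$, which is second order in $h$. The scalar equations give $|\tau'|\leq C(\|h\|_{L^2}+\|f'\|_{L^2})$, which combined with the bound on $\nu'$ closes the loop once the $L^2$-bound on $f'$ is in hand. For the latter I would use a duality argument: for $\phi\in L^2$ set $\psi=(2\tau\Delta_f-1)^{-1}\phi\in H^2$; using self-adjointness of $2\tau\Delta_f-1$ with respect to $e^{-f}\dv$, the pairing $\int_M f'\phi\,e^{-f}\dv$ transforms into $\int_M(\text{RHS})\cdot\psi\,e^{-f}\dv$, and integration by parts moves the second derivatives off $h$ onto $\psi$, giving $|\int_M f'\phi\, e^{-f}\dv|\leq C\|h\|_{L^2}\|\psi\|_{H^2}\leq C\|h\|_{L^2}\|\phi\|_{L^2}$. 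The $H^1$-estimate then follows by interpolation between $L^2$ and $H^2$.

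The main obstacle will be this $L^2$-bound on $f'$: a direct elliptic estimate loses two derivatives because $\scal'_h$ is second order in $h$, and the duality argument above is needed to exploit the fact that the full map $h\mapsto f'_h$ is essentially a pseudodifferential operator of order zero, hence bounded $H^i\to H^i$ for every $i$.
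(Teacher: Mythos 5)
Your proposal is correct and takes essentially the same route as the paper: both differentiate the implicit-function-theorem system from Lemma \ref{analyticnu}, i.e.\ write $dP_g=-\left(dL|_R\right)^{-1}\circ dL|_{C^{2,\alpha}(S^2M)}$, bound the right-hand factor by $\left\|h\right\|_{C^{2,\alpha}}$ (into $C^{0,\alpha}$) and by $\left\|h\right\|_{H^i}$ (into $H^{i-2}$), and then invert the elliptic linearized operator. Your duality argument for the $L^2$-bound on $f'$ is precisely the explicit justification of the paper's terse appeal to "elliptic estimates" in the case $i=0$, where the inverse has to act boundedly from $H^{-2}$ to $L^2$.
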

\begin{proof}Let the maps $P,L$ and the space $R$ be as in the proof of the previous lemma.
Note that if $\mathcal{U}$ is small enough, the map
\begin{align*}h\mapsto \left(\frac{d}{dt}\bigg\vert_{t=0}f_{g+th},\frac{d}{dt}\bigg\vert_{t=0}\tau_{g+th}\right)
\end{align*}
is precisely the differential of $P$. By the chain rule,
\begin{align}\label{dP}dP_{g}=-\left(dL_{g,f_{g},\tau_{g}}|_R\right)^{-1}\circ dL_{g,f_{g},\tau_{g}}|_{C^{2,\alpha}(S^2M)}.
 \end{align}
First, we compute $dL_{g,f,\tau}|_{C^{2,\alpha}(S^2M)}$. We have
\begin{align*}
dH(g,h,f,\tau):=\frac{d}{dt}\bigg\vert_{t=0}H(g+th,f,\tau)=2\tau\Delta'f-\tau h(\grad f,\grad f)-\scal'.
\end{align*}
Then,
\begin{align*}dL_1(h)&=dH(g,h,f,\tau)-\frac{1}{(4\pi\tau_{g_0})^{n/2}}\int_MdH(g,h,f,\tau)e^{-f_{g_0}}dV_{g_0},\\
              dL_2(h)&=\frac{1}{(4\pi\tau)^{n/2}}\frac{1}{2}\int_M fe^{-f}\trace h\dv_g+\frac{1}{(4\pi\tau_{g_0})^{n/2}}\int_M dH(g,h,f,\tau)e^{-f_g}\dv_{g_0},\\
              dL_3(h)&=\frac{1}{(4\pi\tau)^{n/2}}\frac{1}{2}\int_M e^{-f}\trace h\dv_g
\end{align*}
by the first variation of the volume element.
The first variation of the Laplacian and the scalar curvature are
\begin{align*}
\frac{d}{dt}\bigg\vert_{t=0}\Delta_{g+th} f=&\langle h,\nabla^2 f\rangle-\left\langle\delta h+\frac{1}{2}\nabla\trace h,\nabla f\right\rangle,\\
 \frac{d}{dt}\bigg\vert_{t=0}\scal_{g+th}=&\Delta_g(\trace_g h)+\delta_g(\delta_g h)-\langle \ric_g,h\rangle_g,
\end{align*}
c.f.\ \cite[pp.\ 62-64]{Bes08}.
Therefore, we have the estimates
\begin{align*}
 \left\|dL_{g,f,\tau}|_{C^{2,\alpha}(S^2M)}(h)\right\|_{C^{0,\alpha}}&\leq C_1\left\| h\right\|_{C^{2,\alpha}},\\
 \left\|dL_{g,f,\tau}|_{C^{2,\alpha}(S^2M)}(h)\right\|_{H^{i-2}}&\leq C_2\left\| h\right\|_{H^i},
\end{align*}
where $i=0,1,2$. Now we consider $dL_{g,f,\tau}|_R$. 
This is essentially an elliptic operator which is invertible at $(g_0,f_{g_0},\tau_{g_0})$. By continuity, it is also invertible on a small $C^{2,\alpha}$-neighbourhood of the tuple.
By elliptic estimates, we conclude from \eqref{dP} that
\begin{align*}
 \left\|dP_{g}(h)\right\|_{C^{2,\alpha}}&\leq C_3\left\| h\right\|_{C^{2,\alpha}},\qquad \left\|dP_{g}(h)\right\|_{H^{i}}\leq C_4\left\| h\right\|_{H^i}
\end{align*}
for $g$ close to $g_0$. This finishes the proof of the lemma.

\end{proof}
To prove the main result of this section, we need a form of the slice theorem. Recall that Ebin's slice theorem \cite{Eb70} states the following: For any Riemannian metric $g$ on a compact manifold, there exists neighbourhood $\mathcal{U}$ of $g$
and a submanifold $\mathcal{S}_g\subset\mathcal{M}$ tangent to $\delta^{-1}_g(0)$ such that any metric in $\mathcal{U}$ is isometric to a unique metric in $\mathcal{S}_g$. We call $\mathcal{S}_g$ a slice of the action of $\Diff(M)$. 

For our purpose, it is more convenient to use a slice tangent to $\delta^{-1}_{g,f}(0)$ where $f$ is the potential function of the Ricci soliton. In fact, we can choose the slice to be affine, i.e.\
\begin{align}\label{affineslice}
 \mathcal{S}_{g,f}=\left\{g+h\mid\delta_{g,f}(h)=0\right\}.
\end{align}
By \cite[Lemma A.5]{Ach12}, there exists a $C^{1+k,\alpha}$-neighbourhood $\mathcal{U}$ of $g$ such that any metric in $\mathcal{U}$ is isometric to a unique metric in $\mathcal{S}_{g,f}\cap\mathcal{U}$.
\begin{thm}[Lojasiewicz-Simon inequality]\label{nonintegrablegradientII}\index{Lojasiewicz-Simon inequality}Let $(M,g_0)$ be a gradient shrinking Ricci soliton. Then there exists a $C^{2,\alpha}$ neighbourhood $\mathcal{U}$
of $g_0$ and constants $\sigma\in[1/2,1)$, $C>0$ such that
\begin{align}\label{gradientnu2}|\nu(g)-\nu(g_0)|^{\sigma}\leq C \left\|\tau(\ric_g+\nabla^2 f_g)-\frac{1}{2}g\right\|_{L^2}
\end{align}
for all $g\in\mathcal{U}$.
\end{thm}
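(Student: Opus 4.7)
The plan is to apply the standard Lojasiewicz--Simon recipe for an analytic functional with Fredholm gradient, after passing to a gauge slice to remove diffeomorphism invariance. All the ingredients are in hand: analyticity of $\nu$ (Lemma \ref{analyticnu}), quantitative estimates on $g\mapsto(f_g,\tau_g)$ (Lemma \ref{f',tau}), the first and second variation formulas (Propositions \ref{firstnu} and \ref{secondnu}), and the affine slice \eqref{affineslice}.

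First I would use the $\Diff(M)$-invariance of $\nu$ together with the slice theorem quoted just above the statement: every metric in a small $C^{2,\alpha}$-neighbourhood of $g_0$ is isometric to a unique element of $\mathcal{S}_{g_0,f_{g_0}}$, and both sides of \eqref{gradientnu2} are invariant under isometric pullback (the right-hand side because $\Phi(g):=\tau_g(\ric_g+\nabla^2 f_g)-\tfrac{1}{2}g$ transforms tensorially since $f_{\varphi^*g}=\varphi^*f_g$ and $\tau_{\varphi^*g}=\tau_g$). Hence it suffices to prove the inequality for metrics $g=g_0+h$ in the slice, i.e.\ for the analytic functional $\tilde{\nu}(h):=\nu(g_0+h)$ defined on a small $C^{2,\alpha}$-ball in $T_{g_0}\mathcal{S}_{g_0,f_{g_0}}=\kernel(\delta_{g_0,f_{g_0}})$.

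Next I would identify the gradient of $\tilde{\nu}$ and verify that its linearization at $0$ is Fredholm. By Proposition \ref{firstnu}, the weighted $L^2$-gradient of $\tilde{\nu}$ at $h$ equals, up to the uniformly bounded factor $(4\pi\tau_g)^{-n/2}e^{-f_g}$, the orthogonal projection of $\Phi(g_0+h)$ onto $\kernel(\delta_{g_0,f_{g_0}})$; in particular $\|\nabla\tilde{\nu}(h)\|_{L^2}\le C\|\Phi(g_0+h)\|_{L^2}$. Its linearization at $0$ is read off from Proposition \ref{secondnu}: by Remark \ref{decompremark}, on the subspace $V\subset\kernel(\delta_{g_0,f_{g_0}})$ it agrees with the self-adjoint elliptic operator $-\tfrac{1}{2}\Delta_{f_{g_0}}+\mathring{R}$, while on the complementary line $\R\cdot\ric_{g_0}$, which records the scaling orbit inside the slice, $\tilde{\nu}$ is constant and the linearization vanishes. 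The implicit dependence of $(f_g,\tau_g)$ on $g$ contributes only lower-order, compact perturbations, uniformly controlled by Lemma \ref{f',tau}. Consequently the Hessian of $\tilde{\nu}$ is Fredholm from $C^{2,\alpha}\cap\kernel(\delta_{g_0,f_{g_0}})$ to $C^{0,\alpha}\cap\kernel(\delta_{g_0,f_{g_0}})$.

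With analyticity and Fredholmness secured, the abstract Lojasiewicz--Simon inequality for analytic functionals on Banach spaces (Simon's theorem, in the Banach-space version employed in the analogous Ricci-flat and Einstein results \cite{HM13,Kro13a}) produces constants $C>0$ and $\sigma\in[1/2,1)$ with
\[
|\tilde{\nu}(h)-\nu(g_0)|^{\sigma}\le C\,\|\nabla\tilde{\nu}(h)\|_{L^2}
\]
for all sufficiently small $h\in C^{2,\alpha}$. Combining this with the gauge reduction of the first step and the estimate $\|\nabla\tilde{\nu}(h)\|_{L^2}\le C\|\Phi(g_0+h)\|_{L^2}$ of the second step yields \eqref{gradientnu2}. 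The main technical obstacle is the Fredholm property of the Hessian on the slice: one must check that the derivatives of $g\mapsto(f_g,\tau_g)$ produce only compact, lower-order perturbations of the elliptic principal part $-\tfrac{1}{2}\Delta_{f_{g_0}}+\mathring{R}$, which is precisely what Lemma \ref{f',tau} delivers. The remaining passage from the abstract inequality to \eqref{gradientnu2} is bookkeeping of constants and weights.
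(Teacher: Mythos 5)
Your proposal is correct and follows essentially the same route as the paper: reduce to the affine slice $\mathcal{S}_{g_0,f_{g_0}}$ by diffeomorphism invariance, identify the $L^2$-gradient of the restricted functional via Proposition \ref{firstnu} and its linearization via Remark \ref{decompremark} (vanishing on $\R\cdot\ric_{g_0}$, elliptic and Fredholm on the complement), control the gradient's dependence on $(f_g,\tau_g)$ by Lemma \ref{f',tau}, and invoke an abstract Lojasiewicz--Simon theorem (the paper uses \cite[Theorem 7.3]{CM12} where you cite Simon's Banach-space version). The only cosmetic difference is the choice of reference for the abstract inequality; the structure and all key estimates coincide.
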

\begin{proof}Since both sides are diffeomorphism invariant\index{diffeomorphism!invariance}, it suffices to show the inequality on a slice\index{slice} to the action of the diffeomorphism group\index{diffeomorphism!group}. Let
\begin{align*}\mathcal{S}_{g_0,f_{g_0}}=\mathcal{U}\cap\left\{g_0+h \text{ }\bigg\vert\text{ }\delta_{g_0,f_{g_0}}(h)=0\right\}.
\end{align*}
Let $\tilde{\nu}$ be the $\nu$-functional restricted to $\mathcal{S}_{g_0,f_{g_0}}$. Obviously, $\tilde{\nu}$ is analytic\index{analytic} since $\nu$ is.
By the first variational formula\index{first variation!of $\nu$} in Lemma \ref{firstnu},
 the $L^2$-gradient of $\nu$ is (up to a constant factor) given by $\nabla\nu(g)=[\tau(\ric_g+\nabla^2 f_g)-\frac{1}{2}g]e^{-f_g}$. 
It vanishes at $g_0$. On the neighbourhood $\mathcal{U}$, we have the uniform estimate
\begin{align}\label{H^2estimatefornu}\left\|\nabla\nu(g_1)-\nabla\nu(g_2)\right\|_{L^2}\leq C\left\|g_1-g_2\right\|_{H^2},
\end{align}
which holds by Taylor expansion\index{Taylor expansion} and Lemma \ref{f',tau}. The $L^2$-gradient of $\tilde{\nu}_-$ is given by the projection\index{projection} of $\nabla\nu$ to $\delta^{-1}_{g_0,f_{g_0}}(0)$. Therefore, (\ref{H^2estimatefornu}) also holds for $\nabla\tilde{\nu}$.
The linearization\index{linearization} of $\tilde{\nu}$ at $g_0$ vanishes on $\R\cdot \ric_{g_0}$ and equals 
$$\frac{\tau_{g_0}}{(4\pi\tau_{g_0})^{n/2}}e^{-f_0}(-\frac{1}{2}\Delta_{f}+\mathring{R})$$
on the $L^2(e^{-f_{g_0}}\dv)$-orthogonal complement\index{L@$L^2$-orthogonal!complement} of $\R\cdot\ric_{g_0}$ in $\delta^{-1}_{g_0,f_{g_0}}(0)$, see Remark \ref{decompremark}. Let us denote this operator by $D$. By ellipticity\index{ellipticity},
\begin{align*}D:(\delta^{-1}_{g_0,f_{g_0}}(0))^{{2,\alpha}}\to(\delta^{-1}_{g_0,f_{g_0}}(0))^{{0,\alpha}}
\end{align*}
is Fredholm. It also satisfies the estimate $\left\|Dh\right\|_{L^2}\leq C \left\|h\right\|_{H^2}$. By a general Lojasiewicz-Simon inequality \cite[Theorem 7.3]{CM12}, there exists a constant $\sigma\in [1/2,1)$ such that the inequality $|\nu(g)-\nu(g_0)|^{\sigma}\leq \left\|\nabla\tilde{\nu}_-(g)\right\|_{L^2}$
holds for any $g\in\mathcal{S}_{g_0,f_{g_0}}$. Since
\begin{align*}\left\|\nabla\tilde{\nu}(g)\right\|_{L^2}\leq \left\|\nabla\nu(g)\right\|_{L^2}\leq  C \left\|\tau(\ric_g+\nabla^2 f_g)-\frac{1}{2}g\right\|_{L^2},
\end{align*}
(\ref{gradientnu2}) holds on all $g\in\mathcal{S}_{g_0,f_{g_0}}$. By diffeomorphism invariance\index{diffeomorphism!invariance}, it holds on all $g\in \mathcal{U}$.
\end{proof}
\section{Dynamical stability and instability}
In this section, we prove the main theorems of the paper. 
We consider the $\tau$-flow
\begin{align}\label{tauricciflow}\dot{g}(t)=-2\ric_{g(t)}+\frac{1}{\tau_{g(t)}}g(t)
\end{align}
which is well-defined in a neighbourhood of a gradient shrinking Ricci soliton. Observe that $\nu$ is nondecreasing under the $\tau$-flow.
We also construct a modified $\tau$-flow as follows: Let $\varphi_t\in \Diff(M)$, $t\geq 1$ be the family of diffeomorphisms generated by
$X(t)=-\gradient_{g(t)}f_{g(t)}$ and define $\tilde{g}(t):=\varphi_t^*g(t)$, where $g(t)$ is a solution of $\eqref{tauricciflow}$. Then we have
\begin{align}\label{modifiedtauflow}\frac{d}{dt}\tilde{g}(t)=-2(\ric_{\tilde{g}(t)}+\nabla^2 f_{\tilde{g}(t)})+\frac{1}{\tau_{\tilde{g}(t)}}\tilde{g}(t).
\end{align}
This is the gradient flow of $\tau$ with respect to the weighted $L^2$-measure.

\begin{lem}\label{shorttimeestimates3}Let $(M,g)$ be a gradient shrinking Ricci soliton and let $k\geq3$. Then for each $C^{k}$-neighbourhood $\mathcal{U}$ of $g$, there exists a $C^{k+5}$-neighbourhood $\mathcal{V}$ of $g$ such that
the modified $\tau$-flow \eqref{modifiedtauflow}, starting at any metric in $\mathcal{V}$, stays in $\mathcal{U}$ for all $t\in[0,1]$.
\end{lem}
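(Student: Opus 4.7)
The strategy is to verify standard short-time existence and continuous dependence for the modified $\tau$-flow \eqref{modifiedtauflow}, exploiting the fact that $g$ itself is a stationary solution (since $\ric_g+\nabla^2 f_g=\frac{1}{2\tau_g}g$ at a shrinking soliton).

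First, I would address the failure of strict parabolicity. The right-hand side $\tilde{g}\mapsto-2(\ric_{\tilde{g}}+\nabla^2 f_{\tilde{g}})+\tau_{\tilde{g}}^{-1}\tilde{g}$ is a quasi-linear second-order operator whose non-local pieces $f_{\tilde{g}}$ and $\tau_{\tilde{g}}$ depend analytically on $\tilde{g}$ in a $C^{2,\alpha}$-neighbourhood of $g$ by Lemma~\ref{analyticnu}, with derivative bounds supplied by Lemma~\ref{f',tau}. As in \cite{HM13} for the Ricci-flat case, a DeTurck-type modification---adding a Lie derivative $\mathcal{L}_{W(\tilde{g},g)}\tilde{g}$ along the usual vector field built from the difference of Christoffel symbols of $\tilde{g}$ and $g$---converts the equation into a strictly parabolic quasi-linear system that still admits $g$ as a stationary solution.

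Next, I would apply standard short-time existence theory for quasi-linear parabolic systems (Banach fixed-point on a parabolic Hölder space, or Solonnikov-type estimates), using Lemma~\ref{f',tau} to control the non-local contributions of $f_{\tilde{g}}$ and $\tau_{\tilde{g}}$. This yields a solution on some short interval whose $C^k$-norm at positive times is continuously controlled by a stronger norm of the initial data; the ``$+5$'' reflects the accumulated loss of derivatives from the quasi-linear parabolic estimates, the elliptic problem implicitly defining $f_{\tilde{g}}$, and the DeTurck gauge transformation. Since the stationary solution $\tilde{g}(t)\equiv g$ lies well inside any $C^k$-neighbourhood $\mathcal{U}$ for $t\in[0,1]$, continuous dependence then produces a $C^{k+5}$-neighbourhood $\mathcal{V}$ of $g$ such that all solutions starting in $\mathcal{V}$ remain in $\mathcal{U}$ throughout $[0,1]$. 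Undoing the DeTurck gauge by pull-back along a controlled family of diffeomorphisms preserves $C^k$-closeness up to a uniform constant.

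The principal obstacle is to thread the non-local, analytic dependence of $f_{\tilde{g}},\tau_{\tilde{g}}$ on $\tilde{g}$ through the parabolic estimates while ensuring that the solution never exits the $C^{2,\alpha}$-neighbourhood of Lemma~\ref{analyticnu} on which these quantities are well-defined in the first place. The derivative bounds of Lemma~\ref{f',tau} are precisely what is needed to close the fixed-point iteration, and a careful accounting of where derivatives are lost at each stage pins down the regularity index $k+5$.
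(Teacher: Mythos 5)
Your proposal takes a genuinely different route from the paper, and it contains a gap at its central step. The paper does not invoke parabolic well-posedness or continuous dependence at all: it derives evolution inequalities for $|\nabla^i R|^2$ and for $|\nabla^i(\ric+\nabla^2 f-\tfrac{1}{2\tau}g)|^2$ along the \emph{unmodified} $\tau$-flow, controls $\|f\|_{C^{i+4}}$ and $\|\partial_t f\|_{C^{i+2}}$ by elliptic regularity from the Euler--Lagrange equation \eqref{nueulerlagrange}, applies the maximum principle to conclude that the soliton defect tensor $\ric+\nabla^2 f-\tfrac{1}{2\tau}g$ (which is exactly $-\tfrac12\dot{\tilde g}$) stays pointwise small on $[0,1]$ if it is small initially, transfers these pointwise bounds to the modified flow by diffeomorphism invariance, and finally integrates $\|\dot{\tilde g}(t)\|_{C^k}$ in time together with a maximality-of-$T$ continuity argument. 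The requirement of bounds on $|\nabla^i R_{g(0)}|$ for $i\le k+3$ is what pins down the index $k+5$.

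The gap in your approach is the assertion that a DeTurck-type modification turns \eqref{modifiedtauflow} into a \emph{strictly parabolic quasi-linear system that still admits $g$ as a stationary solution}. For a nontrivial soliton these two properties are in tension. If you use the standard DeTurck vector field and drop the potential term, then at $\tilde g=g$ the right-hand side equals $2\nabla^2 f_g\neq 0$, so $g$ is \emph{not} stationary and the ``continuous dependence near a stationary solution'' argument collapses; the reference solution drifts along $\psi_t^*g$ and you are thrown back onto comparing two families of diffeomorphisms, which is essentially the estimate the paper proves. If instead you retain $\nabla^2 f_{\tilde g}$ to keep $g$ stationary, the system is no longer a standard quasi-linear parabolic one: by Lemma~\ref{f',tau} the minimizer $f_{\tilde g}$ has only the same H\"older regularity as $\tilde g$ (the linearization $h\mapsto Df[h]$ is a pseudodifferential operator of order $0$, since the elliptic inversion exactly cancels the two derivatives in $\scal'[h]$), so $h\mapsto \nabla^2 Df[h]$ is a \emph{nonlocal operator of order two} whose $C^{2,\alpha}\to C^{0,\alpha}$ norm is $O(1)$, not small. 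It therefore cannot be absorbed as a perturbation in a Solonnikov/Banach-fixed-point scheme, and ``standard short-time existence theory for quasi-linear parabolic systems'' does not apply as stated; one would need maximal-regularity or pseudodifferential parabolic machinery, none of which you set up. A secondary unsubstantiated step is the uniform $C^{k+1}$-control on $[0,1]$ of the gauge diffeomorphisms used to undo the DeTurck modification: obtaining it requires precisely the curvature-derivative bounds along the flow that the paper extracts from the maximum principle, so it cannot be waved through as ``controlled.''
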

\begin{proof}  Let us denote the $\epsilon$-ball with respect to the $C^k_{g}$-norm by $\mathcal{B}_{\epsilon}^k$.
Without loss of generality, we may assume that $\mathcal{U}$
 is      of the form $\mathcal{U}=\mathcal{B}_{\epsilon}^k$ for some $\epsilon>0$.
Throughout the proof, let us assume that we are in a neighbourhood of $g$ such that 
Lemma \ref{analyticnu} and Lemma \ref{f',tau} hold. All covariant derivatives, Laplacians and norms in this proof are taken with respect to $g(t)$ (resp.\ $\tilde{g}(t)$).
 Along the (unmodified) $\tau$-flow, we have the evolution equations
\begin{align*}\partial_t R&=-\Delta R+R*R+\frac{2}{\tau}R,\\
              \partial_t \ric&=-\Delta\ric+R*\ric,\\
              \partial_t \frac{1}{2\tau}g&=-\frac{\partial_t\tau}{2\tau^2}g+\frac{1}{2\tau}\left(-2\ric+\frac{1}{\tau}g\right),\\
              \partial_t \nabla^2 f&=\nabla\ric*\nabla f+\nabla^2\partial_t f,
\end{align*}
where $*$ is Hamilton's notation for a combination of tensor products and contractions. The first two formulas follow from rescaling the evolution equations for the standard Ricci flow \cite[pp.\ 26-28]{Bre10}.
The third formula is clear and the last one follows from the first variation of the Hessian \cite[Lemma A.2]{Kro13a}.
The evolution equation for the Riemann tensor yields the evolution inequality
\begin{align*}\partial_t|\nabla^iR|^2\leq -\Delta|\nabla^iR|^2+\sum_{j=1}^{i-1}C_{ij}|\nabla^jR||\nabla^{i-j}R||\nabla^i R|+C_{i0}\left(|R|+\frac{1}{\tau}\right)|\nabla^i R|^2.
\end{align*}
From the maximum principle, we obtain the following. Suppose we have a $\tau$-flow $g(t)$ defined on $[0,T]$, $T\leq 1$ and the bounds
\begin{align}\label{conditionK}\sup_{p\in M}|R_{g(t)}|_{g(t)}\leq K,\qquad \frac{1}{\tau_{g(t)}}\leq K,\qquad\sup_{p\in M}|\nabla^iR_{g(0)}|_{g(0)}\leq K
\end{align}
for all $t\in[0,T]$ and $i\leq k+3$. Then there exists a constant $\tilde{K}(K,n,k)$ such that
\begin{align}\label{conditiontildeK}\sup_{p\in M}|\nabla^i R_{g(t)}|_{g(t)}\leq \tilde{K}
\end{align}
for all $t\in [0,T]$ and $i\leq k+3$.
Furthermore, we have the evolution inequality
\begin{align*}
\partial_t|\ric+\nabla^2 f-1/2\tau \cdot g|^2\leq -\Delta |\ric+\nabla^2 f-1/2\tau \cdot g|^2+2|\ric+\nabla^2 f-1/2\tau \cdot g||(*)|,
\end{align*}
where $(*)$ is given by
\begin{align*}
 (*)=R*\ric+\frac{\partial_t\tau}{2\tau^2}g-\frac{1}{2\tau}\left(-2\ric+\frac{1}{\tau}g\right)+\nabla\ric*\nabla f+\nabla^2\partial_t f+\Delta \nabla^2 f.
\end{align*}
We obtain a bound
\begin{align*}
 |(*)|\leq C(\left\|f\right\|_{C^4},\left\|R\right\|_{C^1},\left\|\partial_t f\right\|_{C^2}),
\end{align*}
where we used the estimate $ |\partial_t\tau|\leq C\left\|\ric-\frac{1}{2\tau}g\right\|_{L^2}.$
Similarly, we have 
\begin{align*}
\partial_t|\nabla^i(\ric+\nabla^2 f-1/2\tau \cdot g)|^2\leq& -\Delta |\nabla^i(\ric+\nabla^2 f-1/2\tau \cdot g)|^2\\
                                                           &+2|\nabla^i(\ric+\nabla^2 f-1/2\tau \cdot g)||(**)|,
\end{align*}
where we have the bound
\begin{align}\label{doublestar}
 |(**)|\leq C(\left\|f\right\|_{C^{i+4}},\left\|R\right\|_{C^{i+1}},\left\|\partial_t f\right\|_{C^{i+2}}).
\end{align}
It remains to control the norms of $f$ and $\partial_tf$. By the differential equation
\begin{align}\label{PDE}\tau(2\Delta f+|\nabla f|^2-\scal)-f+n+\nu=0
 \end{align}
 and elliptic regularity,
\begin{align}\label{iteration}
\left\|f\right\|_{C^{i,\alpha}}\leq C(\left\|f\right\|_{C^{i-1,\alpha}}+\left\|\scal\right\|_{C^{i-2,\alpha}}+|\nu+n|).
\end{align}
From differentiating \eqref{PDE}, we obtain
\begin{align}
 \left\|\partial_t f\right\|_{C^{i,\alpha}}\leq C(\left\| \ric-1/2\tau \cdot g\right\|_{C^{i,\alpha}},\left\|f\right\|_{C^{i,\alpha}},\left\|R\right\|_{C^{i-2,\alpha}}).
\end{align}
Suppose now again that \eqref{conditionK} (and therfore, also \eqref{conditiontildeK}) holds.
Using an iteration argument in \eqref{iteration} and descending to $C^i$-norms we obtain that    
\begin{align*}
\left\| f\right\|_{C^{i+4}}\leq C(\left\|f\right\|_{C^{2,\alpha}},\left\| R\right\|_{C^{i+3}}),\qquad \left\|\partial_t f\right\|_{C^{i+2}}\leq C(\left\|f\right\|_{C^{2,\alpha}},\left\| R\right\|_{C^{i+3}}).
\end{align*}
For all $i\leq k$, we thus have, by \eqref{conditiontildeK} and \eqref{doublestar},
\begin{equation}\begin{split}\label{hihi}
\partial_t|\nabla^i(\ric+\nabla^2 f-1/2\tau \cdot g)|^2\leq& -\Delta |\nabla^i(\ric+\nabla^2 f-1/2\tau \cdot g)|^2\\
                                                           &+C(\left\|f\right\|_{C^{2,\alpha}},\tilde{K})|\nabla^i(\ric+\nabla^2 f-1/2\tau \cdot g)|
\end{split}
\end{equation}
along the $\tau$ flow. By diffeomorphism invariance, conclusion \eqref{conditiontildeK} and the inequality \eqref{hihi} also hold for the modified $\tau$-flow. Let us denote the modified $\tau$-flow by $\tilde{g}$.
 Choose the $C^{k+5}$-neighbourhood $\mathcal{V}$ so small that we have the bounds \eqref{conditionK} for some constant $K>0$ and any modified $\tau$-flow starting in $\mathcal{V}$ as long as the flow stays in $\mathcal{U}$. Then \eqref{conditiontildeK} holds.
Let $\epsilon_1>0$. By \eqref{hihi} and the maximum principle, we can choose $\delta_1=\delta_1(\epsilon_1,\mathcal{U},\mathcal{V})>0$ so small that if
\begin{align*}
 |\nabla^i(\ric_{\tilde{g}(0)}+\nabla^2 f_{\tilde{g}(0)}-1/2\tau_{\tilde{g}(0)} \cdot \tilde{g}(0))|\leq \delta_1,
\end{align*}
then
\begin{align*}
 |\nabla^i(\ric_{\tilde{g}(t)}+\nabla^2 f_{\tilde{g}(t)}-1/2\tau_{\tilde{g}(t)} \cdot \tilde{g}(t))|\leq \epsilon_1,
\end{align*}
for $i\leq k$ and $t\in[0,1]$ as long as the flow stays in $\mathcal{U}$.    Let $T>0$ be the maximal time such that the modified $\tau$-flow stays in $\mathcal{U}$. Suppose that $T\leq1$.
By integration,
\begin{align*}
\left\|\tilde{g}(T)-g\right\|_{C^{k}_{g}}&\leq \left\|\tilde{g} (0)-g\right\|_{C^{k}_{g}}+\int_0^T\frac{d}{dt}\left\|\tilde{g}(t)-\tilde{g}(0)\right\|_{C^{k}_{g}}dt     \\
                                &       \leq      \delta(\mathcal{V})+C(\mathcal{U})\int_0^T\left\|\dot{\tilde{g}}(t)\right\|_{C^{k}_{\tilde{g}(t)}}dt  \\
                                &       \leq      \delta(\mathcal{V})+C(\mathcal{U})  \cdot k\cdot \epsilon_1\leq\frac{\epsilon}{2},
  \end{align*}
 provided that we have chosen $\epsilon_1$ and $\delta(\mathcal{V})$ small enough. This contradicts the maximality of $T$ and proves the lemma.
\end{proof}
\begin{lem}[Shi estimates for the $\tau$-flow]\label{nablacurvature3}Let $g(t)$, $t\in [0,T]$ be a solution of the $\tau$-flow (\ref{tauricciflow}) and suppose that
\begin{align*}\sup_{p\in M}|R_{g(t)}|_{g(t)}+\frac{1}{\tau_{g(t)}}\leq T^{-1}\qquad \forall t\in[0,T].
\end{align*}
Then for each $k\geq 1$, there exists a constant $C(k)$ such that
\begin{align*} \sup_{p\in M}|\nabla^kR_{g(t)}|_{g(t)}\leq C(k)\cdot T^{-1} t^{-k/2}\qquad\forall t\in(0,T].
\end{align*}
\end{lem}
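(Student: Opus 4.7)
The plan is to reduce the statement to the classical Shi estimates for the unnormalized Ricci flow via a conformal rescaling of the metric together with a reparametrisation of the time variable. Since the $\tau$-flow differs from Ricci flow only through the scalar term $\tfrac{1}{\tau_{g(t)}}g(t)$, in which $\tau_{g(t)}$ depends on $t$ alone and not on the point, this correction can be absorbed by rescaling the metric by a positive function of time.

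Concretely, I would set
\[\lambda(t)=\exp\left(-\int_0^t\frac{du}{\tau_{g(u)}}\right),\qquad s(t)=\int_0^t\lambda(u)\,du,\]
and define $\tilde g(s)=\lambda(t(s))\,g(t(s))$. A short computation, using that $\ric$ is invariant under multiplication of the metric by a spatial constant, shows that $\partial_s\tilde g=-2\ric_{\tilde g}$, so $\tilde g$ is an honest Ricci flow. The hypothesis $1/\tau_{g(t)}\leq T^{-1}$ implies $\lambda(t)\in[e^{-1},1]$ on $[0,T]$, hence $s(T)\in[T/e,T]$ and $s(t)\geq t/e$ throughout.

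Under the spatial-constant conformal change $\tilde g=\lambda g$ one has $\tilde\nabla=\nabla$ and $\tilde R{}^{i}{}_{jkl}=R{}^{i}{}_{jkl}$, so
\[|R_{\tilde g}|_{\tilde g}=\lambda^{-1}|R_g|_g,\qquad |\nabla^k R_g|_g=\lambda^{(k+2)/2}\,|\tilde\nabla^k R_{\tilde g}|_{\tilde g}.\]
The hypothesis $|R_g|_g\leq T^{-1}$ thus yields $|R_{\tilde g(s)}|_{\tilde g(s)}\leq eT^{-1}$ on $s\in[0,s(T)]$, an interval of length comparable to $T$. Shi's classical estimates for the Ricci flow then give $|\tilde\nabla^k R_{\tilde g(s)}|_{\tilde g(s)}\leq C(k)\,eT^{-1}\,s^{-k/2}$, and translating back via the scaling relation together with $\lambda\leq1$ and $s\geq t/e$ produces the claimed bound $|\nabla^k R_{g(t)}|_{g(t)}\leq C(k)\,T^{-1}\,t^{-k/2}$ once the universal factors of $e$ are absorbed into $C(k)$.

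The main technical point is justifying the smoothness of the time change $t\mapsto s(t)$, which requires $t\mapsto\tau_{g(t)}$ to be smooth along the $\tau$-flow; this follows from Lemma \ref{analyticnu} as long as the flow stays in the neighbourhood on which the minimizing pair is unique and depends analytically on the metric. A direct alternative, avoiding the reparametrisation, would be to run Shi's Bernstein-type maximum principle on quantities of the form $t|\nabla R|^2+A|R|^2$ (and their higher-order analogues) using the evolution equations for $\nabla^k R$ under the $\tau$-flow already derived in the proof of Lemma \ref{shorttimeestimates3}; the extra zeroth-order term $\tfrac{2}{\tau}R$ in the evolution of the curvature is bounded by $2T^{-2}$ under the hypothesis and therefore contributes only an adjustment of constants to the standard argument.
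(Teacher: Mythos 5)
Your argument is correct, and it is worth noting that the paper does not actually prove this lemma in the text: the proof is a bare citation of Lemma 6.5.6 of the author's thesis \cite{Kro13}. Judging from the surrounding material --- the evolution equation $\partial_t R=-\Delta R+R*R+\frac{2}{\tau}R$ and the derived inequalities for $|\nabla^i R|^2$ recorded in the proof of Lemma \ref{shorttimeestimates3}, together with the remark pointing to Hamilton's Theorem 7.1 --- the cited proof almost certainly runs the Bernstein-type maximum principle directly on the $\tau$-flow, i.e.\ your ``direct alternative''. Your primary route is genuinely different and cleaner: because $\tau_{g(t)}$ depends on $t$ alone, the correction term $\frac{1}{\tau_{g(t)}}g(t)$ is absorbed exactly by the spatially constant conformal factor $\lambda(t)=\exp(-\int_0^t\tau_{g(u)}^{-1}\,du)$ together with the reparametrisation $ds=\lambda\,dt$, giving $\partial_s\tilde g=-2\ric_{\tilde g}$ on the nose (note the reparametrisation is essential, since without it one only gets $\partial_t(\lambda g)=-2\lambda\ric$). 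The hypothesis $1/\tau_{g(t)}\leq T^{-1}$ confines $\lambda$ to $[e^{-1},1]$ and $s(t)$ to $[t/e,t]$, so the curvature bound, the length of the time interval, and the final estimate all degrade only by universal powers of $e$, and the classical Shi estimates apply verbatim --- in particular they require no bounds on derivatives of curvature at $t=0$, consistent with the remark following the lemma. What the direct maximum-principle approach buys is a template that would survive correction terms that are not spatially constant; what your reduction buys is that nothing in Shi's argument has to be re-proved. The one point you rightly flag, the regularity of $t\mapsto\tau_{g(t)}$ needed to define $\lambda$, is supplied by Lemma \ref{analyticnu} and is in any case implicit in the $\tau$-flow being well defined. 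I see no gap.
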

\begin{proof}
 See \cite[Lemma 6.5.6]{Kro13}.
\end{proof}
Estimates of that type are well-kown for the standard Ricci flow \cite[Theorem 7.1]{Ham95}. Note that we do not make assumptions on the derivatives of the curvature at $t=0$.
\begin{thm}[Dynamical stability]\label{dynamicalstability}\index{stable!dynamically}\index{modulo diffeomorphism}Let $(M,g)$ be a gradient shrinking Ricci soliton and let $k\geq3$. Suppose that $g$ is a local maximizer of $\nu$.
Then for every $C^{k}$-neighbourhood $\mathcal{U}$
of $g$, there exists a $C^{k+5}$-neighbourhood $\mathcal{V}$ such that the following holds:

For any metric $g_0\in\mathcal{V}$, there exists a $1$-parameter family\index{1@$1$-parameter family} of diffeomorphisms $\varphi_t$  such that for the $\tau$-flow $(\ref{tauricciflow})$
 starting at $g_0$, the modified flow $\varphi_t^*g(t)$
stays in $\mathcal{U}$ for all time and converges to a gradient shrinking Ricci soliton $g_{\infty}$ in $\mathcal{U}$ as $t\to\infty$.
The convergence is of polynomial rate, i.e.\ there exist constants $C,\alpha>0$ such that
\begin{align*}\left\|\varphi_t^*g(t)-g_{\infty}\right\|_{C^k}\leq C(t+1)^{-\alpha}.
\end{align*}
\end{thm}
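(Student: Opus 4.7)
The plan is to pass to the modified $\tau$-flow $\tilde g(t)=\varphi_t^*g(t)$ from \eqref{modifiedtauflow}, which by construction is the weighted $L^2$-gradient flow of $\nu$; convergence and stay-in-neighbourhood of $\tilde g$ is really what the theorem asserts, and the $\varphi_t$ are then recovered from their defining ODE. Given $\mathcal{U}$, I shrink it so that Lemmas \ref{analyticnu}, \ref{f',tau} and the Lojasiewicz-Simon inequality (Theorem \ref{nonintegrablegradientII}) all apply there, and pick $\mathcal{V}$ from Lemma \ref{shorttimeestimates3} so that the modified flow starting in $\mathcal{V}$ stays in $\mathcal{U}$ up to time $1$.

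The core is the standard Lojasiewicz-Simon ODE argument. Set $H(t):=\nu(g)-\nu(\tilde g(t))$; by the local-maximum hypothesis together with monotonicity of $\nu$ along the $\tau$-flow, $H(t)\ge0$ and $H$ is nonincreasing as long as $\tilde g(t)\in\mathcal{U}$. Proposition \ref{firstnu} and the gradient-flow structure yield
\[
-H'(t)\;\ge\;c_1\bigl\|\tau(\ric+\nabla^2 f_{\tilde g})-\tfrac12 \tilde g\bigr\|_{L^2}^2\;\ge\;c_2\|\dot{\tilde g}(t)\|_{L^2}^2,
\]
while Theorem \ref{nonintegrablegradientII} gives $H(t)^{\sigma}\le c_3\bigl\|\tau(\ric+\nabla^2 f_{\tilde g})-\tfrac12 \tilde g\bigr\|_{L^2}$. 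Combining these through $-\tfrac{d}{dt}H^{1-\sigma}=(1-\sigma)H^{-\sigma}(-H')$ produces $-\tfrac{d}{dt}H^{1-\sigma}(t)\ge c_4\|\dot{\tilde g}(t)\|_{L^2}$, and integration bounds the $L^2$-length
\[
\int_0^T\|\dot{\tilde g}(t)\|_{L^2}\,dt\;\le\;c_4^{-1}H(0)^{1-\sigma}
\]
uniformly in $T\le T^*$, with $T^*$ the maximal time of stay in $\mathcal{U}$.

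Next I run a continuation argument to prove $T^*=\infty$. Suppose $T^*<\infty$. Applying Lemma \ref{shorttimeestimates3} with initial time $s$ and combining with the Shi estimates of Lemma \ref{nablacurvature3} furnishes uniform $C^{k+3}$-bounds on $\tilde g(t)$ on each unit time interval on which the flow remains in $\mathcal{U}$. Interpolating these higher-order bounds against the $L^2$-length estimate yields a uniform $C^k$-bound on $\|\tilde g(t)-g\|_{C^k}$ controlled by a power of $H(0)^{1-\sigma}$ plus the initial $C^{k+5}$-displacement; shrinking $\mathcal{V}$ makes this strictly smaller than the radius of $\mathcal{U}$, contradicting the maximality of $T^*$. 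Hence the modified flow exists and stays in $\mathcal{U}$ for all $t\ge0$.

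For convergence and rate, the finite $L^2$-length makes $\tilde g(t)$ Cauchy in $L^2$, and the uniform $C^{k+3}$-bounds plus interpolation upgrade this to $C^k$-convergence $\tilde g(t)\to g_\infty\in\mathcal{U}$; since $\nabla\nu(g_\infty)=0$, the limit is a gradient shrinking Ricci soliton. The ODE $-H'\ge c_5 H^{2\sigma}$ integrates to $H(t)\le C(1+t)^{-1/(2\sigma-1)}$ for $\sigma\in(1/2,1)$ (exponential decay if $\sigma=1/2$), hence $\|\dot{\tilde g}\|_{L^2}\le c H^\sigma$ decays polynomially, and
\[
\|\tilde g(t)-g_\infty\|_{L^2}\;\le\;\int_t^\infty\|\dot{\tilde g}(s)\|_{L^2}\,ds\;\le\;C(1+t)^{-\alpha}.
\]
Interpolating against the uniform $C^{k+3}$-bounds then gives the stated polynomial $C^k$-rate. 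The main technical obstacle is this interpolation/bootstrap step: the Lojasiewicz-Simon inequality is intrinsically an $L^2$ statement, while the conclusion, the continuation argument, and the rate all demand $C^k$-control, so one must interleave the monotonicity bookkeeping with Shi-type estimates and interpolation inequalities at every stage.
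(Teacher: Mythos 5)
Your overall architecture coincides with the paper's: pass to the modified $\tau$-flow, use Lemma \ref{shorttimeestimates3} for the first unit of time, combine monotonicity of $\nu$ with the Lojasiewicz--Simon inequality to bound the length of the flow line, run a continuation argument, and extract the rate from the decay of $\nu$. However, there is a genuine gap at the central step where $L^2$-information is upgraded to $C^k$-information. You first establish $\int_0^T\|\dot{\tilde g}\|_{L^2}\,dt\leq c\,H(0)^{1-\sigma}$ and then propose to ``interpolate this $L^2$-length estimate against higher-order bounds.'' This does not go through: interpolation inequalities act pointwise in time, and a finite value of $\int\|\dot{\tilde g}\|_{L^2}\,dt$ does not control $\int\|\dot{\tilde g}\|_{L^2}^{1-\eta}\,dt$ (the latter may diverge). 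If instead you interpolate the displacement $\tilde g(t)-\tilde g(1)$ itself, you need a time-uniform $C^N$-bound on that displacement for some $N>k$; but the Shi estimates of Lemma \ref{nablacurvature3} bound derivatives of the \emph{curvature} (and, via elliptic regularity, of $f$), which only yields a uniform bound on $\|\dot{\tilde g}(t)\|_{C^N}$ and hence a bound on $\|\tilde g(t)-\tilde g(1)\|_{C^N}$ growing linearly in $t$ --- not enough to keep the interpolated $C^k$-norm small for all time. The paper resolves this by reversing the order of operations: it interpolates the \emph{velocity} pointwise in time, $\|\dot{\tilde g}(t)\|_{C^k}\leq C\|\dot{\tilde g}(t)\|_{L^2}^{1-\eta}$, and then runs the Lojasiewicz--Simon differential inequality with the adjusted exponent $\theta=1-\sigma(1+\eta)>0$, obtaining directly $-\frac{d}{dt}|\nu(\tilde g(t))-\nu(g)|^{\theta}\geq C\|\dot{\tilde g}(t)\|_{C^k}$ and hence $\int_1^T\|\dot{\tilde g}\|_{C^k}\,dt\leq C|\nu(\tilde g(1))-\nu(g)|^{\theta}$. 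This is the step your argument is missing, and it is what makes both the continuation argument and the $C^k$-rate work.

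A second, more local error occurs in your rate estimate: you write $\|\dot{\tilde g}\|_{L^2}\leq cH^{\sigma}$, but the Lojasiewicz--Simon inequality gives the \emph{reverse} bound $H^{\sigma}\leq c\|\nabla\nu\|_{L^2}\sim c\|\dot{\tilde g}\|_{L^2}$; there is no pointwise upper bound on the velocity by a power of $H$. The correct route (which you have already set up) is to integrate $-\frac{d}{dt}H^{1-\sigma}\geq c\|\dot{\tilde g}\|_{L^2}$ over $[t,\infty)$ to get $\int_t^{\infty}\|\dot{\tilde g}\|_{L^2}\,ds\leq c^{-1}H(t)^{1-\sigma}$, and then insert the decay $H(t)\leq C(1+t)^{-1/(2\sigma-1)}$ coming from $-H'\geq cH^{2\sigma}$; in the paper this is done at the level of the $C^k$-norm using the $\theta$-exponent, which yields the stated $C^k$-rate without any further interpolation of displacements. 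Finally, your assertion that $\nabla\nu(g_{\infty})=0$ needs a word of justification: either argue, as the paper does, that $\nu(g_{\infty})=\nu(g)$ forces $g_{\infty}$ to be a local maximum and hence a critical point of $\nu$, or extract a sequence of times along which $\|\dot{\tilde g}\|_{L^2}\to0$ from the square-integrability of the velocity.
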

\begin{proof}Without loss of generality, we may assume that $\mathcal{U}=\mathcal{B}_{\epsilon}^{k}$
and that $\epsilon>0$ is so small that Theorem \ref{nonintegrablegradientII} holds on $\mathcal{U}$.

By Lemma \ref{shorttimeestimates3}, we can choose a small neighbourhood $\mathcal{V}$ such that the modified $\tau$-flow \eqref{modifiedtauflow}, starting at any metric $g\in\mathcal{V}$ stays in 
$\mathcal{B}^k_{\epsilon/4}$
up to time $1$. Let $T\geq 1$ be the maximal time such that any solution of \eqref{modifiedtauflow}, starting in $\mathcal{V}$,
stays in $\mathcal{U}$. By definition of $T$, we have uniform bounds
\begin{align*}\sup_{p\in M}|R_{\tilde{g}(t)}|_{\tilde{g}(t)}&\leq C_1\qquad \forall t\in[0,T),\\
              |\tau_{\tilde{g}(t)}|&\leq C_2\qquad \forall t\in[0,T).
\end{align*}
By Lemma \ref{nablacurvature3} and diffeomorphism invariance, we have
\begin{align}\label{nablal_RII}\sup_{p\in M}|\nabla^l R_{\tilde{g}(t)}|_{\tilde{g}(t)}\leq C(l)\qquad \forall t\in[1,T).
\end{align}
Because $f_{\tilde{g}(t)}$ satisfies the equation $\tau(2\Delta f+|\nabla f|^2-\scal)-f+n+\nu=0$, we obtain
\begin{align}\label{nablal_fII}\sup_{p\in M}|\nabla^l f_{\tilde{g}(t)}|_{\tilde{g}(t)}\leq \tilde{C}(l)\qquad \forall t\in[1,T).
\end{align}
 We have
\begin{align*}\left\|\tilde{g}(T)-g\right\|_{C^k}\leq& \left\|\tilde{g}(1)-g\right\|_{C^k}+\int_1^{T}\left\| \dot{\tilde{g}}(t)\right\|_{C^{k}}dt
                                                               \leq\frac{\epsilon}{4}+\int_1^{T}\left\| \dot{\tilde{g}}(t)\right\|_{C^{k}}dt.
\end{align*}
By interpolation (c.f. \cite[Corollary 12.7]{Ham82}), (\ref{nablal_RII}) and (\ref{nablal_fII}), we have 
\begin{align*}\left\| \dot{\tilde{g}}(t)\right\|_{C^k}\leq C_3\left\| \dot{\tilde{g}}(t)\right\|^{1-\eta}_{L^2}
\end{align*}
for $t\in [1,T)$ and for $\eta$ as small as we want. In particular, we can assume that $\theta:=1-\sigma(1+\eta)>0$, where $\sigma$ is the constant appearing in the Lojasiewicz-Simon inequality\index{Lojasiewicz-Simon inequality} \ref{nonintegrablegradientII}.
By the first variation of $\nu$\index{first variation!of $\nu$},
\begin{align*}\frac{d}{dt}\nu(\tilde{g}(t))\geq C_4\left\| \dot{\tilde{g}}(t)\right\|^{1+\eta}_{L^2}\left\| \dot{\tilde{g}}(t)\right\|^{1-\eta}_{L^2}.
\end{align*}
By Theorem \ref{nonintegrablegradientII},
\begin{align*}-\frac{d}{dt}|\nu(\tilde{g}(t))-\nu(g)|^{\theta}&=\theta|\nu(\tilde{g}(t))-\nu(g)|^{\theta-1}\frac{d}{dt}\nu(\tilde{g}(t))\\
                                                                    &\geq C_5|\nu(\tilde{g}(t))-\nu(g)|^{-\sigma(1+\eta)}\left\| \dot{\tilde{g}}(t)\right\|^{1+\eta}_{L^2}\left\| \dot{\tilde{g}}(t)\right\|^{1-\eta}_{L^2}\\
                                                       &\geq C_6\left\| \dot{\tilde{g}}(t)\right\|_{C^k}.
\end{align*}
for $t\in [1,T)$.
Hence by integration,
\begin{align*}
\int_1^{T}\left\| \dot{\tilde{g}}(t)\right\|_{C^k}dt\leq \frac{1}{C_6}|\nu(\tilde{g}(1))-\nu(g)|^{\theta}\leq \frac{1}{C_6}|\nu(\tilde{g}(0))-\nu(g)|^{\theta}\leq\frac{\epsilon}{4},
\end{align*}
provided that $\mathcal{V}$ is small enough.
Thus, $T=\infty$ and $\tilde{g}(t)$ converges to some limit metric $g_{\infty}\in\mathcal{U}$ as $t\to\infty$. By the Lojasiewicz-Simon inequality\index{Lojasiewicz-Simon inequality}, we have
\begin{align*}\frac{d}{dt}|\nu(\tilde{g}(t))-\nu(g)|^{1-2\sigma}\geq C_7,
\end{align*}
which implies
\begin{align*}|\nu(\tilde{g}(t))-\nu(g)|\leq C_8(t+1)^{-\frac{1}{2\sigma-1}}.
\end{align*}
Therefore, $\nu(g_{\infty})=\nu(g)$, so $g_{\infty}$ is a gradient shrinking Ricci soliton, since it is also a local maximum of $\nu$. The convergence is of polynomial rate, since for $t_1<t_2$,
\begin{align*}\left\|\tilde{g}(t_1)-\tilde{g}(t_2)\right\|_{C^k}\leq C_9|\nu(\tilde{g}(t_1))-\nu(g)|^{\theta}\leq C_{10}(t_1+1)^{-\frac{\theta}{2\sigma-1}}.
\end{align*}
The assertion follows from $t_2\to\infty$. By the above arguments, one also sees that $\tilde{g}(t)$ converges in any $C^k$-norm and therefore, the limit metric is smooth.
\end{proof}
\begin{thm}[Dynamical instability]\label{dynamicalinstabilitymodulodiffeo}\index{unstable!dynamically}\index{modulo diffeomorphism}Let $(M,g)$ be a gradient shrinking Ricci soliton that is not a local maximizer of $\nu$. 
 Then there exists a nontrivial ancient $\tau$-flow\index{ancient} $g(t)$,
 $t\in (-\infty,0]$ and a $1$-parameter family\index{1@$1$-parameter family} of diffeomorphisms $\varphi_t$, $t\in (-\infty,0]$ such that $\varphi_t^*g(t)\to g$ as $t\to\infty$.
\end{thm}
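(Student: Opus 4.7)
My plan is to construct the ancient $\tau$-flow as a subsequential $C^\infty_{\mathrm{loc}}$-limit of time-shifted forward modified $\tau$-flows started at metrics close to $g$ with strictly larger $\nu$-value. Since $g$ is not a local maximizer of $\nu$, I pick a sequence $g_i \to g$ in $C^{k+5}$ with $\nu(g_i) > \nu(g)$, and using the slice construction \eqref{affineslice} I may assume each $g_i$ lies in $\mathcal{S}_{g,f_g}$. Let $\tilde g_i(t)$ denote the modified $\tau$-flow \eqref{modifiedtauflow} with $\tilde g_i(0) = g_i$. Because this flow is, up to a positive factor, the gradient flow of $\nu$ in the weighted $L^2$-metric, $\nu(\tilde g_i(t))$ is nondecreasing and stays strictly above $\nu(g)$ throughout its interval of existence.

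Next I fix a neighborhood $\mathcal U = \mathcal B^k_\epsilon$ of $g$ small enough for Theorem \ref{nonintegrablegradientII}, Lemma \ref{shorttimeestimates3} and Lemma \ref{nablacurvature3} to all apply, and introduce the exit time $T_i := \inf\{t \geq 0 : \tilde g_i(t) \notin \mathcal B^k_{\epsilon/2}\}$. I expect $T_i \to \infty$ as $i \to \infty$: if $T_i$ stayed bounded along a subsequence, continuous dependence on initial data furnished by Lemma \ref{shorttimeestimates3} would produce a limiting modified $\tau$-flow starting at $g$ that exits $\mathcal B^k_{\epsilon/2}$ in finite time, contradicting the fact that such a flow must be the stationary flow $\tilde g(t)\equiv g$. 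I then time-shift by setting $h_i(t) := \tilde g_i(t + T_i)$ for $t \in [-T_i, 0]$, so that $h_i(0) \in \partial \mathcal B^k_{\epsilon/2}$ and $h_i([-T_i, 0]) \subset \overline{\mathcal B^k_{\epsilon/2}}$. Lemma \ref{nablacurvature3} provides uniform $C^\infty$ bounds on the $h_i$ on any compact subset of $(-\infty, 0]$, so Arzelà-Ascoli and a diagonal argument extract a subsequential $C^\infty_{\mathrm{loc}}$-limit $h(t)$ solving the modified $\tau$-flow on $(-\infty, 0]$; it is nontrivial because $h(0) \in \partial \mathcal B^k_{\epsilon/2}$. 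Pulling $h(t)$ back along the diffeomorphisms generated by $-\gradient f_{h(t)}$ produces the desired ancient $\tau$-flow $g(t)$ and diffeomorphisms $\varphi_t$ with $\varphi_t^* g(t) = h(t)$.

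To show $h(t) \to g$ as $t \to -\infty$, I set $u_i(t) := \nu(h_i(t)) - \nu(g) > 0$. The gradient flow identity $u_i'(t) = c_1 \|\dot h_i(t)\|_{L^2}^2$, combined with the Lojasiewicz-Simon inequality $u_i^\sigma \leq c_2 \|\dot h_i\|_{L^2}$ of Theorem \ref{nonintegrablegradientII}, yields the ODE inequality $u_i' \geq c_3 u_i^{2\sigma}$. Integrating backward from $t = 0$, where $u_i(0)$ is uniformly bounded from above on $\overline{\mathcal U}$, forces $u_i(-s) \to 0$ uniformly in $i$ as $s \to \infty$ (power-law decay if $\sigma > 1/2$, exponential if $\sigma = 1/2$), so $\nu(h(t)) \to \nu(g)$. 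A second use of Lojasiewicz-Simon yields $\int_{-\infty}^0 \|\dot h(t)\|_{L^2}\,dt < \infty$; combined with the smooth curvature bounds on $h$ this forces smooth convergence $h(t) \to h_{-\infty}$ to a gradient shrinking Ricci soliton with $\nu(h_{-\infty}) = \nu(g)$ lying in $\overline{\mathcal S_{g,f_g} \cap \mathcal U}$.

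The main obstacle will be the identification of $h_{-\infty}$ with $g$ itself modulo diffeomorphism. If the slice $\mathcal S_{g,f_g}$, shrunk to a sufficiently small neighborhood of $g$, contains no other $\nu$-critical point at the level $\nu(g)$, then $h_{-\infty} = g$ is immediate. In the degenerate case where such isolation fails (for instance in the presence of non-integrable infinitesimal solitonic deformations, cf.\ Theorem \ref{nonintegrableISD}), the plan has to be refined by choosing the initial perturbations $g_i$ specifically in directions in $V$ along which the stability operator $N$ of Proposition \ref{secondnu} is strictly positive, so that the backward-in-time Lojasiewicz-Simon decay pushes the flow back to $g$ rather than to a nearby critical point. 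Once this identification is in place, $\varphi_t^* g(t) = h(t) \to g$ as $t \to -\infty$, as required.
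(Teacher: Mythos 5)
Your overall architecture (perturb $g$ to nearby metrics with larger $\nu$, run the modified $\tau$-flow, time-shift by the exit time, extract a subsequential limit, and drive everything with the Lojasiewicz--Simon inequality of Theorem \ref{nonintegrablegradientII}) is the same as the paper's. But there is a genuine gap at the step you yourself flag as ``the main obstacle'': showing that the ancient limit flow converges to $g$ itself as $t\to-\infty$, rather than to some other critical point of $\nu$ at the same level. Your proposed remedy --- assuming $g$ is isolated among critical points at level $\nu(g)$ in the slice, or else re-choosing the initial perturbations $g_i$ along directions where the stability operator $N$ is strictly positive --- is not a proof: the first case is an unjustified hypothesis (and Theorem \ref{nonintegrableISD} shows the critical set can be badly behaved), and in the second case nothing prevents the trajectory from drifting tangentially and limiting onto a nearby critical point anyway. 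The statement of the theorem has no integrability or isolation hypothesis, so this step cannot be left conditional.

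The paper closes this gap without any isolation assumption by \emph{anchoring} each trajectory at its starting point and controlling its total length. Integrating the Lojasiewicz--Simon gradient estimate $-\frac{d}{dt}\left(\nu(\tilde g_i(t))-\nu(g)\right)^{\theta}\geq C\left\|\dot{\tilde g}_i(t)\right\|_{C^{k-6}}$ from the initial time up to time $t$ gives, for the shifted flows $\tilde g_i^s$,
\begin{align*}
\left\|\tilde g_i^s(T_i)-\tilde g_i^s(t)\right\|_{C^{k-6}}\leq C\left(\nu(\tilde g_i(t+t_i))-\nu(g)\right)^{\theta}\leq\left[-C_5t+C_6\right]^{-\frac{\theta}{2\sigma-1}},
\end{align*}
where the last bound comes from the backward ODE inequality you already derived (your $u_i'\geq c_3u_i^{2\sigma}$). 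Since the anchor points $\tilde g_i^s(T_i)=\bar g_i$ converge to $g$, the triangle inequality gives $\left\|g-\tilde g(t)\right\|_{C^{k-6}}\leq o(1)+\left[-C_5t+C_6\right]^{-\theta/(2\sigma-1)}\to 0$ as $t\to-\infty$, with no need to identify a limit soliton at all. You actually have all the ingredients for this (the uniform decay of $u_i(t)$ as $t\to-\infty$ and the finiteness of $\int\left\|\dot h\right\|\,dt$); what is missing is assembling them into a bound on the distance from $h(t)$ back to the initial data $\bar g_i\to g$, uniformly in $i$. A secondary, minor point: nontriviality of the limit flow should not be deduced merely from $h(0)\in\partial\mathcal B^k_{\epsilon/2}$, since the norm is only known to pass to the limit in the weaker topology; the paper instead passes the inequality $\epsilon\leq\left\|\bar g_i-g\right\|+C\left(\nu(\tilde g_i(t_i))-\nu(g)\right)^{\theta}$ to the limit to get $\nu(\tilde g(0))>\nu(g)$, which together with $\nu(\tilde g(t))\to\nu(g)$ forces the flow to be nonconstant.
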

\begin{proof}Let $g_i\to g$ in $C^k$ and suppose that $\nu(g_i)>\nu(g)$ for all $i$.
Let $\tilde{g}_i(t)$ be a solution of \eqref{modifiedtauflow} starting at $g_i$.
Then by Lemma \ref{shorttimeestimates3}, $\bar{g}_i=g_i(1)$ converges to $g$ in $C^{k-5}$ and by monotonicity\index{monotonicity}, $\nu(\bar{g}_i)>\nu(g)$ as well.
Let $\epsilon>0$ be so small that Theorem \ref{nonintegrablegradientII} holds on $\mathcal{B}^{k-5}_{2\epsilon}$.
Then we have the differential inequality\index{differential inequality}
\begin{align*}\frac{d}{dt}(\nu(\tilde{g}_i(t))-\nu(g))^{1-2\sigma}\geq -C_1,
\end{align*}
from which we obtain
\begin{align*}[(\nu(\tilde{g}_i(t))-\nu(g))^{1-2\sigma}-C_1(s-t)]^{-\frac{1}{2\sigma-1}} \leq (\nu(\tilde{g}_i(s))-\nu(g)),
\end{align*}
as long as $\tilde{g}_i(t)$ stays in $\mathcal{B}^{k-5}_{2\epsilon}$.
Thus, there exists a $t_i$ such that
\begin{align*}\left\|\tilde{g}_i(t_i)-g\right\|_{C^{k-5}}=\epsilon,
\end{align*}
and $t_i\to \infty$. If $\left\{t_i\right\}$ was bounded, $\tilde{g}_i(t_i)\to g$ in $C^{k-5}$. By interpolation,
\begin{align*}\left\|\dot{\tilde{g}}_i(t)\right\|_{C^{k-5}}\leq C_2 \left\|\dot{\tilde{g}}_i(t)\right\|_{L^2}^{1-\eta}
\end{align*}
for $\eta>0$ as small as we want. We may assume that $\theta=1-\sigma(1+\eta)>0$.
By Theorem \ref{nonintegrablegradientII} , we have the differential inequality\index{differential inequality}
\begin{align*}\frac{d}{dt}(\nu(\tilde{g}_i(t))&-\nu(g))^{\theta}\geq C_3\left\|\dot{\tilde{g}}_i(t)\right\|_{L^2}^{1-\eta},
\end{align*}
if $\nu(\tilde{g}_i(t))>\nu(g)$. Thus,
\begin{align}\label{nontrivial4}\epsilon=\left\|\tilde{g}_i(t_i)-g\right\|_{C^{k-5}}\leq\left\|\bar{g}_i-g\right\|_{C^{k-5}}+C_4(\nu(\tilde{g}_i(t_i))-\nu(g))^{\theta}.
\end{align}
Now put $\tilde{g}_i^s(t):=\tilde{g}_i(t+t_i)$, $t\in[T_i,0]$, where $T_i=1-t_i\to-\infty$. We have
\begin{align*}\left\|\tilde{g}^s_i(t)-g\right\|_{C^{k-5}}&\leq\epsilon\qquad\forall t\in[T_i,0],\\
                \tilde{g}_i^s(T_i)&\to g \text{ in }C^{k-5}.
\end{align*}
Because the embedding $C^{k-6}(M)\subset C^{k-5}(M)$ is compact\index{compact embedding}, we can choose a subsequence\index{subsequence} of the $\tilde{g}_i^s$, converging in $C^{k-6}_{loc}(M\times (-\infty,0])$ to an ancient flow\index{ancient} $\tilde{g}(t)$, $t\in (-\infty,0]$,
which satisfies the differential equation\index{differential equation}
\begin{align*} \dot{\tilde{g}}(t)=-2\left(\ric_{\tilde{g}(t)}-\frac{1}{2\tau_{\tilde{g}(t)}}\tilde{g}(t)+\nabla^2 f_{\tilde{g}(t)}\right).
\end{align*}
From taking the limit $i\to\infty$ in \eqref{nontrivial4}, we have $\epsilon\leq C_4(\nu(\tilde{g}(0))-\nu(g))^{\beta/2}$ which shows that the Ricci flow is nontrivial.
For $T_i\leq t$, the Lojasiewicz-Simon inequality\index{Lojasiewicz-Simon inequality} implies
\begin{align*}\left\|\tilde{g}^s_i(T_i)-\tilde{g}_i^s(t)\right\|_{C^{k-6}}\leq& C_4(\nu(\tilde{g}_i(t+t_i))-\nu(g))^{\theta}\\
                                                           \leq&C_4[-C_1t+ (\nu(\tilde{g}_i(t_i))-\nu(g))^{1-2\sigma}]^{-\frac{\theta}{2\sigma-1}}\\
                                                           \leq&[-C_5t+C_6]^{-\frac{\theta}{2\sigma-1}}.
\end{align*}
Thus,
\begin{align*}\left\|g-\tilde{g}(t)\right\|_{C^{k-6}}\leq &\left\|g-\tilde{g}^s_i(T_i)\right\|_{C^{k-6}}+[-C_5t+C_6]^{-\frac{\theta}{2\sigma-1}}
                                      +\left\|\tilde{g}_i^s(t)-\tilde{g}(t)\right\|_{C^{k-6}}.
\end{align*}
It follows that $\left\|g-\tilde{g}(t)\right\|_{C^{k-6}}\to0$ as $t\to-\infty$. 
\end{proof}
\begin{rem}For any $\tau$-flow $g(t)$ we obtain a solution $\hat{g}(t)$ of the normalized Ricci flow \eqref{volumepreservingricciflow} by projecting to the subset $\mathcal{M}_{c}$ of metrics of volume $c$ and by rescaling the time parameter suitably.
Now if $g(t)$ converges to some Ricci soliton as $t\to \pm\infty$, the same holds for $\hat{g}(t)$. This shows that Theorem \ref{dynamicalstability} and Theorem \ref{dynamicalinstabilitymodulodiffeo} also hold when replacing the $\tau$-flow by the volume-normalized Ricci flow.
In this way, we obtain the theorems as stated in the introduction.
\end{rem}
\begin{rem}
 All known nontrivial Ricci solitons are dynamically unstable (c.f.\ Remark \ref{linstabilityremark}).
\end{rem}

       \section{The integrability condition}
Let $g_t$ a $C^1$-curve of Ricci solitons through $g$ and suppose that all $g_t$ are of the same volume $c$. By projecting to the affine slice, we obtain a $C^1$-curve of Ricci solitons
\begin{align*}
 \tilde{g}_t\subset \mathcal{M}_c\cap \left\{g+k\mid \delta_{g,f_{g}}(k)=0\right\}
\end{align*}
where $\mathcal{M}_c$ is the space of metrics of volume $c$.
 We have \begin{align*}
          \frac{d}{dt}\bigg\vert_{t=0}\tilde{g}_t=h\in  V=\left\{h\in \Gamma(S^2M)\text{ }\bigg\vert \text{ }\delta_f(h)=0 \text{ and }\int_M \langle\ric,h\rangle e^{-f}\dv=0\right\}
\end{align*}
 and $\nu''(h)=0$ because $\nu$ is constant along $\tilde{g}_t$. This motivates the following definition:
\begin{defn}
 Let $(M,g)$ be a gradient shrinking Ricci soliton and let $N$ be the stability operator of Proposition \ref{secondnu}. We call $h\in C^{\infty}(S^2M)$ an infinitesimal solitonic deformation if $h\in V$ and $N(h)=0$. An infinitesimal solitonic deformation is called integrable if there exists a curve of Ricci solitons $g_t$ through $g=g_0$
such that $\frac{d}{dt}|_{t=0}g_t=h$.
\end{defn}
If $g$ is Einstein, this generalizes the notion of infinitesimal Einstein deformations (IED). 
Recall that an IED is a trace-free and divergence-free (TT) tensor which lies in the kernel of the Einstein operator $\Delta_E =\Delta-2\mathring{R}$ \cite[p.\ 347]{Bes08}.
An IED is called integrable if there exists a curve of Einstein metrics tangent to it.
\begin{lem}\label{ISDIED}
 Let $(M,g)$ be an Einstein manifold with Einstein constant $\mu$. Let $IED$ be the space of infinitesimal Einstein deformations and $ISD$ the space of infinitesimal solitonic deformations. Then we have
\begin{align*}
 ISD=IED\oplus \left\{\mu v\cdot g+\nabla^2 v|v\in C^{\infty}(M),\Delta v=2\mu v\right\}.
\end{align*}
\end{lem}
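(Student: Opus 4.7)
Since $g$ is Einstein with Einstein constant $\mu$, the Ricci soliton equation $\ric + \nabla^2 f = \tfrac{1}{2\tau}g$ forces $\nabla^2 f = 0$, so on a compact manifold $f$ must be constant. All weighted operators in Proposition~\ref{secondnu} then reduce to their unweighted counterparts: $V = \{h \in \kernel\,\delta : \int_M \trace_g h \,\dv = 0\}$, and the stability operator restricted to $V$ becomes $N = -\tfrac{1}{2}\Delta + \mathring{R}$. Consequently
\begin{equation*}
ISD = \bigl\{h \in C^\infty(S^2M) : \delta h = 0,\ \textstyle\int_M \trace h \,\dv = 0,\ \Delta h = 2\mathring{R}h\bigr\},
\end{equation*}
and $IED$ consists of those elements whose trace vanishes pointwise; in particular $IED \subset ISD$.

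The next step is to verify that $k := \mu v g + \nabla^2 v$ lies in $ISD$ for every $v \in C^\infty(M)$ with $\Delta v = 2\mu v$. Using $\trace(\nabla^2 v) = -\Delta v$, the trace of $k$ equals $(n-2)\mu v$, and since $v$ is a nontrivial eigenfunction of $\Delta$ its integral vanishes, whence $\int_M \trace k \,\dv = 0$. The Bochner identity for $1$-forms gives $\delta(\nabla^2 v) = d\Delta v - \ric(dv, \cdot)$, which on Einstein with $\Delta v = 2\mu v$ simplifies to $\mu\, dv$ and exactly cancels $\delta(\mu v g) = -\mu\, dv$. For the equation $\Delta k = 2\mathring{R}k$, the scalar piece is immediate (both sides equal $2\mu^2 v g$, using $\mathring{R}g = \ric = \mu g$); the Hessian piece follows from the identity
\begin{equation*}
\Delta(\nabla^2 v) = \nabla^2(\Delta v) - 2\mu \nabla^2 v + 2\mathring{R}(\nabla^2 v),
\end{equation*}
valid on any Einstein manifold, under which the assumption $\Delta v = 2\mu v$ produces the desired cancellation.

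For the reverse inclusion, given $h \in ISD$ I would recover $v$ from $\trace h$: taking the trace of $\Delta h = 2\mathring{R}h$ and using $\trace(\Delta h) = \Delta \trace h$ together with $\trace(\mathring{R}h) = \langle \ric, h\rangle = \mu \trace h$ yields $\Delta \trace h = 2\mu \trace h$. Setting $v := \trace h/((n-2)\mu)$ (assuming $n \geq 3$ and $\mu > 0$, which covers the shrinking Einstein case), the residue $h' := h - (\mu v g + \nabla^2 v)$ is pointwise trace-free by construction, divergence-free (subtraction of two divergence-free tensors), and annihilated by $\Delta - 2\mathring{R}$ (subtraction of the two equations), so $h' \in IED$. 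Directness is then immediate, since any trace-free element $\mu v g + \nabla^2 v$ must have $(n-2)\mu v \equiv 0$, forcing $v = 0$.

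The main obstacle is the Weitzenb\"ock-type identity displayed above. Its derivation is a careful third-order computation: one expands $\nabla^k \nabla_k \nabla_i \nabla_j v$ by commuting $\nabla^k$ past the inner covariant derivatives via the Ricci identity, and the Riemann-tensor traces $g^{ab} R_{a\,\cdot\, b\,\cdot}$ that appear all collapse to multiples of $\mu g$ by the Einstein condition, while derivatives of $R$ paired against $dv$ vanish through the contracted second Bianchi identity $\nabla^a R_{abcd} = \nabla_c \ric_{bd} - \nabla_d \ric_{bc} = 0$ on Einstein.
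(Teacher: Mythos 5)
Your proof is correct, but it takes a more hands-on route than the paper. The paper's argument is essentially a two-line reduction: since $f$ is constant, $N=-\tfrac{1}{2}\Delta_E$ on $V$, and it then invokes the known splitting $V=TT\oplus\{F(v)=(\Delta v-\mu v)g+\nabla^2 v\}$ together with the intertwining relation $\Delta_E(F(v))=F((\Delta-2\mu)v)$ from \cite{CHI04}, so the kernel is read off componentwise. You instead verify everything directly: you check by explicit computation (divergence cancellation via the Bochner formula, and the commutation identity $\Delta(\nabla^2 v)=\nabla^2(\Delta v)-2\mu\nabla^2 v+2\mathring{R}(\nabla^2 v)$) that $\mu vg+\nabla^2 v$ lies in $ISD$, and for the converse you extract $v$ from $\trace h$ using $\Delta\trace h=2\mu\trace h$ and show the residue is a TT solution of $\Delta_E h'=0$. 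What your approach buys is that you never need to establish that the splitting $V=TT\oplus\image F$ is direct and exhaustive (a point that requires some care on the round sphere because of conformal Killing data); the trace-extraction argument sidesteps it entirely and also gives directness of the sum for free. What it costs is that the key third-order Weitzenb\"ock identity is only sketched; it is, however, exactly the standard fact that the Lichnerowicz Laplacian intertwines with $\delta^*$ (equivalently $\Delta_L\delta^*\omega=\delta^*\Delta_H\omega$), specialized to $\omega=dv$ on an Einstein manifold, which is the same ingredient hidden in the paper's citation, so nothing essentially new is being assumed. Two minor caveats: your division by $(n-2)\mu$ requires $n\geq 3$ (harmless here, since compact shrinking solitons in dimension $2$ are round spheres, where both sides of the asserted decomposition are trivial), and you should note that when $\trace h\equiv 0$ the extraction step just returns $v=0$, which is consistent.
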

\begin{proof}Since $g$ is Einstein, we have $\Delta_E=-\frac{1}{2}N$ on $V$, where $V$ is as above. The space $V$ splits 
as
\begin{align*}
 V=TT\oplus \left\{F(v)=(\Delta v-\mu v)g+\nabla^2 v\text{ }\bigg\vert\text{ } \int_M v\dv=0\right\}
\end{align*}
The kernel of $\Delta_E$ on TT-tensors is $IED$ by definition. On the second component, we have $\Delta_E (F(v))=F((\Delta-2\mu)v)$ \cite[p.\ 7]{CHI04}.
See also \cite[p.\ 106]{Kro13} for details.
\end{proof}
IED's and their integrablity were studied by Koiso \cite{Koi78,Koi80,Koi82,Koi83}, see also \cite[Chapter 12]{Bes08}.
Recently, Podest\`a and Spiro \cite{PS13} generalized some of Koiso's results to the case of Ricci solitons. For a given Ricci soliton $g$, they constructed a slice $\mathcal{S}_{g,f_{g}}$ in the space of metrics tangent to $\delta^{-1}_{g,f_{g}}(0)$ such that
the follwing holds: For any $s\geq [\frac{n}{2}]+3$, there exists a $H^{s}$-neighbourhood $\mathcal{U}\subset \mathcal{S}_{g,f_{g}}$ of $g$ and a finite-dimensional submanifold $\mathcal{Z}\subset\mathcal{U}$ such that
$T_{g}\mathcal{P}=\kernel(N|_{V})\oplus\R\cdot\ric_g$ and the set of Ricci solitons $\mathcal{P}\subset\mathcal{Z}$ is a real analytic subset of $\mathcal{Z}$ \cite[Theorem 3.4]{PS13}. Any metric which is
$H^s$-close to $g$ is isometric to a unique metric in $S_{g,f_g}$.

For the main theorem in this section, we impose the technical condition that all infinitesimal solitonic deformations are integrable. If this condition holds, $\mathcal{P}=\mathcal{Z}$ provided that $\mathcal{U}$ is small enough.
In particular, the set of Ricci solitons in $\mathcal{U}$ is a finite-dimensional manifold.

The slice used in \cite{PS13} is constructed via the exponential map of the weak
Riemannian structure on $\mathcal{M}$. Hence it
 differs from the affine slice $\tilde{\mathcal{S}}_{g,f_g}$ that we use in this paper. 
However, we can identify the Ricci solitons in $\mathcal{S}_{g,f_g}$ and $\tilde{\mathcal{S}}_{g,f_g}$ via the map $\Psi:\mathcal{P}\to\tilde{\mathcal{S}}_{g,f_g}$
which associates to $g_1\in \mathcal{S}_{g,f_g}$ the unique metric $\tilde{g}_1\in \tilde{\mathcal{S}}_{g,f_g}$ isometric to $g_1$. Since $\mathcal{P}$ is finite-dimensional,
it also consists of all Ricci solitons in a suitable $C^{2,\alpha}$-neighbourhood of $g$.
By \cite[Lemma A.5]{Ach12}, $\Psi$ is a diffeomorphism onto its image and the set $\tilde{\mathcal{P}}=\Psi(\mathcal{P})$, consisting of all Ricci solitons in $\tilde{\mathcal{S}}_{g,f_g}$, is a finite-dimensional manifold.
\begin{lem}\label{f'',tau}Let $(M,g_0)$ be a gradient shrinking Ricci soliton. Then there exists a $C^{2,\alpha}$-neighbourhood $\mathcal{U}$ of $g_0$ and a constant $C>0$ such that
\begin{align*}\left\|\frac{d^2}{dtds}\bigg\vert_{t,s=0}f_{g+sk+th}\right\|_{H^1}\leq C \left\|k\right\|_{C^{2,\alpha}}\left\|h\right\|_{H^1},\qquad \left|\frac{d^2}{dtds}\bigg\vert_{t,s=0}\tau_{g+sk+th}\right|\leq C\left\|k\right\|_{C^{2,\alpha}}\left\|h\right\|_{H^1} .
\end{align*}
for all $g\in \mathcal{U}$.
\end{lem}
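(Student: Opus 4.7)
The plan is to differentiate the implicit identity $L(g,P(g))=0$ from the proof of Lemma \ref{analyticnu} once more and apply elliptic regularity. Writing $P(g)=(f_g,\tau_g)$ and applying the chain rule twice yields
\begin{align*}
d^2P_g(h,k)=-\bigl(dL_{g,P(g)}|_R\bigr)^{-1}\,d^2L_{g,P(g)}\bigl((h,dP_g(h)),(k,dP_g(k))\bigr).
\end{align*}

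By the computation in Lemma \ref{analyticnu}, at $g_0$ the operator $dL|_R$ is block-diagonal: on the infinite-dimensional component $V$ it equals the second-order elliptic operator $2\tau_{g_0}\Delta_{f_{g_0}}-1$, and on the remaining component it is an invertible $3\times 3$ matrix. By continuity this invertibility persists on a $C^{2,\alpha}$-neighbourhood of $g_0$, with uniform bounds. Standard elliptic regularity then implies that the inverse of the elliptic part maps $H^{-1}$ continuously into $H^1$; hence it suffices to bound the second variation of $L$ on the right in $H^{-1}$ (for the $V$-component) and in absolute value (for the three scalar components) by $C\|k\|_{C^{2,\alpha}}\|h\|_{H^1}$.

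The second variation of $L$ is a finite sum of products of $h$, $k$, $dP(h)$, $dP(k)$ and their derivatives up to second order, with smooth coefficients in $g$ and $f_g$. This follows by differentiating $H(g,f,\tau)=\tau(2\Delta_g f+|\nabla f|_g^2-\scal_g)-f+n$ using the standard second variation formulas for $\scal_g$, $\Delta_g$ and $|\nabla f|_g^2$. By Lemma \ref{f',tau}, $\|dP(k)\|_{C^{2,\alpha}}\leq C\|k\|_{C^{2,\alpha}}$ and $\|dP(h)\|_{H^1}\leq C\|h\|_{H^1}$, so in each monomial the rough factor (with $H^1$-control only) is always $h$ or $dP(h)$, while $k$ and $dP(k)$ are smooth enough to absorb $L^\infty$- and $C^1$-norms. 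Monomials bearing at most one derivative on the rough factor are estimated directly in $L^2\subset H^{-1}$ by Hölder's inequality, and the scalar integrals contributing to the three finite-dimensional coordinates are handled analogously.

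The only delicate contributions are monomials with two derivatives on the rough factor, for instance $\langle k,\nabla^2 dP(h)\rangle$ or scalar-curvature cross-terms of the shape $\nabla^2h\cdot k$. For these, we pair against a test function $\phi\in H^1$ and integrate by parts once to transfer a derivative onto $\phi$ (while at most one derivative lands on the $C^{2,\alpha}$-factor $k$), obtaining bounds of the form $C\|k\|_{C^1}\|dP(h)\|_{H^1}\|\phi\|_{H^1}$, respectively $C\|k\|_{C^1}\|h\|_{H^1}\|\phi\|_{H^1}$. This is the main technical step. Summing all contributions yields the required $H^{-1}$-estimate, and hence the $H^1$-bound on the second variation of $f_g$; the absolute-value estimate on the second variation of $\tau_g$ comes from inverting the $3\times 3$ block.
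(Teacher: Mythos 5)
Your proposal is correct and follows essentially the same route as the paper: differentiating the implicit relation $L(g,P(g))=0$ (equivalently, the formula $dP_g=-A_g^{-1}\circ B_g$) once more, bounding the resulting second variation of $L$ in $H^{-1}$ by $C\left\|k\right\|_{C^{2,\alpha}}\left\|h\right\|_{H^1}$ via the schematic second variation formulas for $\Delta$ and $\scal$, and inverting the elliptic operator $dL|_R$ with uniform bounds near $g_0$. Your integration-by-parts treatment of the terms carrying two derivatives on the rough factor is exactly the content of the paper's ``straightforward calculation and standard estimates.''
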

\begin{proof}
 Let the maps $P,L$ and the space $R$ be as in the proof of the Lemma \ref{analyticnu}. We then have
\begin{align*}
 \left(\frac{d^2}{dtds}\bigg\vert_{t,s=0}f_{g+sk+th},\frac{d^2}{dtds}\bigg\vert_{t,s=0}\tau_{g+sk+th}\right)=\frac{d}{ds}\bigg\vert_{s=0}dP_{g+sk}(h).
\end{align*}
Let us denote $A_g=dL_{g,f_{g},\tau_{g}}|_R$ and $B_g=dL_{g,f_{g},\tau_{g}}|_{C^{2,\alpha}(S^2M)}$, so that we can rewrite \eqref{dP} as
\begin{align}\label{dP2}dP_{g}=-A_g^{-1}\circ B_g.
\end{align}
By differentiating,
\begin{align*}
 \frac{d}{ds}\bigg\vert_{s=0}dP_{g+sk}=-A_g^{-1}\circ \left(\frac{d}{ds}\bigg\vert_{t=0}A_{g+sk}\right)\circ A_g^{-1}\circ B_g-A_g^{-1}\circ \left(\frac{d}{ds}\bigg\vert_{s=0}B_{g+sk}\right).
\end{align*}
The derivatives of $A_g$ and $B_g$ contain the second variation of the Laplacian and the scalar curvature, which can be schematically written as
\begin{align*}\frac{d^2}{dsdt}\bigg\vert_{s,t=0}\Delta_{g+sk+th}f&=\nabla k*h*\nabla f+k*\nabla h*\nabla f,\\
\frac{d^2}{dsdt}\bigg\vert_{s,t=0}\scal_{g+sk+th}&=\nabla^2 k*h+k*\nabla^2 h+\nabla k*\nabla h+ R*k*h,
\end{align*} 
see \cite[Lemma A.3]{Kro13a}.
Thus by straightforward calculation and standard estimates,
\begin{align*}
 \left\|\frac{d}{ds}\bigg\vert_{t=0}A_{g+sk}(f)\right\|_{H^{-1}} \leq C\left\| k\right\|_{C^{2,\alpha}}\left\|f\right\|_{H^{1}},\qquad
 \left\|\frac{d}{ds}\bigg\vert_{t=0}B_{g+sk}(h)\right\|_{H^{-1}} \leq C\left\| k\right\|_{C^{2,\alpha}}\left\|h\right\|_{H^{1}} ,
\end{align*}
where $f\in R$ and $h\in C^{2,\alpha}(S^2M)$. By combining these estimates with those for $A_g$ and $B_g$ in the proof of Lemma \ref{f',tau}, we obtain the result.
\end{proof}

\begin{prop}[Estimates of the third variation of $\nu$]\label{thirdnu}\index{third variation!of $\nu$}Let $(M,g_0)$ be a gradient shrinking Ricci soliton. There exists a $C^{2,\alpha}$-neighbourhood $\mathcal{U}$ of $g_0$ such that
$$\left|\frac{d^3}{dt^3}\bigg\vert_{t=0}\nu(g+th)\right|\leq C \left\| h\right\|^2_{H^1}\left\|h\right\|_{C^{2,\alpha}}$$
for all $g\in\mathcal{U}$ and some constant $C>0$.
\end{prop}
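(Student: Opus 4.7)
The plan is to exploit the first variation formula from Proposition~\ref{firstnu} and differentiate it twice. I would write
\[
\frac{d^3}{dt^3}\bigg\vert_{t=0}\nu(g+th)=\frac{d^2}{dt^2}\bigg\vert_{t=0}\nu'(g+th)(h),
\]
with
\[
\nu'(g+th)(h)=-\frac{1}{(4\pi\tau_{g+th})^{n/2}}\int_M\left\langle\tau_{g+th}(\ric_{g+th}+\nabla^2_{g+th}f_{g+th})-\tfrac{1}{2}(g+th),h\right\rangle_{g+th}e^{-f_{g+th}}\dv_{g+th},
\]
and expand the right-hand side using the Leibniz rule. The result is a finite sum of integrals over $M$; each summand contains the outer (undifferentiated) $h$, $t$-derivatives of order $0$, $1$, or $2$ of $\tau_{g+th}$ and $f_{g+th}$, and $t$-derivatives of the geometric quantities $\ric_{g+th}$, $\nabla^2_{g+th}f_{g+th}$, $(g+th)^{-1}$, $e^{-f_{g+th}}$, $\dv_{g+th}$. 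The $\tau$- and $f$-derivatives are controlled by Lemma~\ref{f',tau} and Lemma~\ref{f'',tau}, whereas the variations of the geometric quantities are standard polynomial expressions in $h,\nabla h,\nabla^2 h$ and the same $\tau$- and $f$-derivatives (compare~\cite[Lemma A.2, A.3]{Kro13a}).

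Crucially, every summand is trilinear in $h$-type quantities, counting $\tfrac{d}{dt}\vert_{t=0}f_{g+th}$ as a ``one-$h$ object'' (with $C^{2,\alpha}$-bound from Lemma~\ref{f',tau}) and $v:=\tfrac{d^2}{dt^2}\vert_{t=0}f_{g+th}$ as a ``two-$h$ object'' (with only an $H^1$-bound from Lemma~\ref{f'',tau}). Since $\ric$ and $\nabla^2$ are second-order in $g$ and we differentiate only twice, no single factor carries more than two spatial derivatives of $h$, and no single summand contains two factors with two spatial derivatives simultaneously. For every summand that does not involve $v$, I would estimate directly by Hölder, placing the factor with the most spatial derivatives in $L^\infty$ (bounded by $\|h\|_{C^{2,\alpha}}$) and the remaining two factors in $L^2$ (bounded by $\|h\|_{H^1}$). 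This gives the claimed estimate $C\|h\|_{H^1}^2\|h\|_{C^{2,\alpha}}$ for each such term, uniformly on a $C^{2,\alpha}$-neighbourhood of $g_0$.

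The main obstacle is the single summand where $v$ appears together with a Hessian. It arises from the second variation of $\nabla^2_{g+th}f_{g+th}$ when both $t$-derivatives fall on $f$, and has the schematic form
\[
\int_M\tau_g\langle\nabla^2 v,h\rangle e^{-f_g}\dv_g.
\]
A direct estimate would require an $H^2$-bound on $v$, which is not available; only $\|v\|_{H^1}\le C\|h\|_{C^{2,\alpha}}\|h\|_{H^1}$ is at hand. I would circumvent this by integrating by parts exactly once against the weighted measure $e^{-f_g}\dv_g$, transforming the term into
\[
\int_M\tau_g\langle\nabla v,\delta_{f_g}h\rangle e^{-f_g}\dv_g,
\]
which is controlled by $C\|\nabla v\|_{L^2}\|\delta_{f_g}h\|_{L^2}\le C\|v\|_{H^1}\|h\|_{H^1}\le C\|h\|_{C^{2,\alpha}}\|h\|_{H^1}^2$. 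The only other appearance of $v$ comes from $\tfrac{d^2}{dt^2}\vert_{t=0}e^{-f_{g+th}}=e^{-f_g}(-v+(\tfrac{d}{dt}\vert_{t=0}f_{g+th})^2)$, where $v$ is undifferentiated and is immediately handled by the $L^2$-bound. Combining these with the routine Hölder estimates for all other summands gives the proposition.
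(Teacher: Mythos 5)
Your proposal follows essentially the same route as the paper: differentiate the first variation formula from Proposition~\ref{firstnu} twice, expand by the Leibniz rule into a finite sum of trilinear terms, control the $t$-derivatives of $f_{g+th}$ and $\tau_{g+th}$ via Lemmas~\ref{f',tau} and~\ref{f'',tau}, and estimate each summand by placing the factor with the most spatial derivatives in $L^\infty$ and the rest in $L^2$. Your explicit treatment of the term $\int_M\tau\langle\nabla^2(\tfrac{d^2}{dt^2}\vert_{t=0}f_{g+th}),h\rangle e^{-f}\dv$ by one weighted integration by parts (using $\nabla^2 u=\delta_f^*(du)$ so that the term becomes $\int_M\tau\langle\nabla u,\delta_f h\rangle e^{-f}\dv$) is correct and in fact makes precise a step the paper subsumes under ``standard estimates,'' since only an $H^1$-bound on the second $t$-derivative of $f$ is available.
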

\begin{proof}We put $v=\frac{e^{-f}}{(4\pi\tau)^{n/2}}$ and $\nabla \nu=\tau(\ric+\nabla^2f)-\frac{1}{2}g$. Then
\begin{align*}\frac{d^3}{dt^3}\bigg\vert_{t=0}\nu(g+th)=&-\frac{d^2}{dt^2}\bigg\vert_{t=0}\int_M \langle \nabla \nu,h\rangle v \dv\\
                                                =&-\int_M \langle (\nabla \nu)'',h\rangle v \dv-6\int_M \langle \nabla \nu,h\circ h\circ h\rangle v \dv-\int_M \langle \nabla \nu,h\rangle (v \dv)''\\
                                                &+2\int_M \langle (\nabla \nu)',h\circ h\rangle v \dv+2\int_M \langle \nabla \nu,h\circ h\rangle (v \dv)'-\int_M \langle (\nabla \nu)',h\rangle (v \dv)'.
\end{align*}
 Here, $\circ$ denotes the composition of symmetric $(0,2)$-tensors, considered as endomorphisms on $TM$.
We have
\begin{align*}
\frac{d}{dt}\bigg\vert_{t=0}\nabla^2_{g+th}f&=\nabla h*\nabla f,\\
\frac{d}{dt}\bigg\vert_{t=0}\ric_{g+th}&=(\nabla^2*h)+(R*h),
\end{align*}
see \cite[Lemma A.1 and Lemma A.2]{Kro13a}.
The second variation of the Ricci tensor and the Hessian appearing in $(\nabla \nu)''$ are of the form
\begin{align*}
 \frac{d}{ds}\frac{d}{dt}\bigg\vert_{s,t=0}\nabla_{g+sk+th}^2f=&k*\nabla h*\nabla f+\nabla k*h*\nabla f,\\
\frac{d}{ds}\frac{d}{dt}\bigg\vert_{s,t=0}\ric_{g+sk+th}=&k*\nabla^2h+\nabla^2k*h+\nabla k*\nabla h+R*k*h,
\end{align*}
see \cite[Lemma A.3]{Kro13a}.
Now, standard estimates and the Lemmas \ref{f',tau} and \ref{f'',tau} yield an upper bound of the form $C \left\| h\right\|^2_{H^1}\left\|h\right\|_{C^{2,\alpha}}$ for each of these terms.
\end{proof}
\begin{thm}\label{maximalitynu}Let $(M,g_{0})$ be a shrinking gradient Ricci soliton and suppose that all infinitesimal solitonic deformations are integrable.
If $g_{0}$ is linearly stable, then there exists a small $C^{2,\alpha}$-neighbourhood $\mathcal{U}\subset\mathcal{M}$ of $g_{0}$ such that $\nu(g)\leq \nu(g_{0})$ for all $g\in \mathcal{U}$.
\end{thm}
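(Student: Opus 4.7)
The plan is a Koiso-style Taylor expansion argument: use the integrability assumption to parameterize metrics near $g_0$ by a nearby Ricci soliton plus a transverse perturbation, and then expand $\nu$ around that base point. By diffeomorphism invariance of $\nu$ together with the slice theorem \cite[Lemma A.5]{Ach12}, it suffices to prove $\nu(g)\leq\nu(g_0)$ for $g$ in a $C^{2,\alpha}$-neighborhood of $g_0$ inside the affine slice $\tilde{\mathcal{S}}_{g_0,f_{g_0}}=\{g_0+k\mid\delta_{g_0,f_{g_0}}(k)=0\}$. The discussion preceding the theorem, combined with the integrability hypothesis, produces a finite-dimensional smooth submanifold $\tilde{\mathcal{P}}\subset\tilde{\mathcal{S}}_{g_0,f_{g_0}}$ consisting of all Ricci solitons near $g_0$, with $T_{g_0}\tilde{\mathcal{P}}=\kernel(N|_V)\oplus\R\cdot\ric_{g_0}$.

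Since every point of $\tilde{\mathcal{P}}$ is a critical point of $\nu$, any smooth curve $g_t\subset\tilde{\mathcal{P}}$ satisfies $\tfrac{d}{dt}\nu(g_t)=\nu'(g_t)(\dot g_t)=0$, so $\nu$ is locally constant on $\tilde{\mathcal{P}}$; in particular $\nu(g_1)=\nu(g_0)$ for every $g_1\in\tilde{\mathcal{P}}$ in the connected component of $g_0$. For $g$ close to $g_0$, I would then use the tubular neighborhood of $\tilde{\mathcal{P}}$ to write $g=g_1+h$, with $g_1\in\tilde{\mathcal{P}}$ a smoothly-depending nearest point in $L^2(e^{-f_{g_0}}\dv_{g_0})$ and $h$ transverse to $T_{g_1}\tilde{\mathcal{P}}$. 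For $g_1$ close to $g_0$, the weighted $L^2$-inner products vary analytically by Lemma \ref{analyticnu}, so up to a controllable error $h\in V_{g_1}$ and $h$ is $L^2(e^{-f_{g_1}}\dv_{g_1})$-orthogonal to $\kernel(N_{g_1}|_{V_{g_1}})$.

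Since $g_1$ is a critical point of $\nu$, Taylor's formula gives
\begin{align*}
\nu(g)-\nu(g_0)=\tfrac{1}{2}\nu''_{g_1}(h,h)+\int_0^1\tfrac{(1-t)^2}{2}\,\nu'''_{g_1+th}(h,h,h)\,dt,
\end{align*}
and Proposition \ref{thirdnu} bounds the cubic remainder by $C\|h\|^2_{H^1}\|h\|_{C^{2,\alpha}}$. For the quadratic term I would use Remark \ref{decompremark}, which identifies $N_{g_1}|_{V_{g_1}}=-\tfrac{1}{2}\Delta_{f_{g_1}}+\mathring{R}$. Linear stability $N_{g_0}\leq 0$, together with the fact that integrability forces $\dim\kernel(N_{g_1}|_{V_{g_1}})$ to be locally constant along $\tilde{\mathcal{P}}$, yields a uniform spectral gap $\lambda_1<0$ for the first nonzero eigenvalue of $N_{g_1}$ as $g_1$ varies near $g_0$. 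Combining the bound $\nu''_{g_1}(h,h)\leq\lambda_1\|h\|^2_{L^2}$ with the ellipticity of $-\Delta_{f_{g_1}}+\mathring{R}$ via a standard convex-combination argument (using $|\langle\mathring{R}h,h\rangle|\leq C\|h\|^2_{L^2}$) upgrades this to the coercive estimate $\nu''_{g_1}(h,h)\leq-c\|h\|^2_{H^1}$. Shrinking the neighborhood so that $\|h\|_{C^{2,\alpha}}<c/(2C)$ absorbs the cubic remainder into the negative quadratic term and yields $\nu(g)\leq\nu(g_0)$.

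The main obstacle I expect is the bookkeeping for the family of orthogonal splittings as $g_1$ ranges over $\tilde{\mathcal{P}}$: one needs $g_1\mapsto T_{g_1}\tilde{\mathcal{P}}$ and $g_1\mapsto\kernel(N_{g_1}|_{V_{g_1}})$ to define smooth subbundles with a uniform spectral gap on the complement, while simultaneously the weighted $L^2$-measure depends on $g_1$. The integrability hypothesis is exactly what ensures the kernel dimension is locally constant and coincides with the tangent space of $\tilde{\mathcal{P}}$, so both subbundles are well-defined and vary analytically by Lemma \ref{analyticnu}; the discrepancies between the inner products for $g_0$ and $g_1$ produce only lower-order terms that can be absorbed into the cubic error on a sufficiently small neighborhood.
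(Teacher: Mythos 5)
Your proposal follows essentially the same route as the paper: reduce to the affine slice by diffeomorphism invariance, use integrability to obtain the finite-dimensional manifold $\mathcal{P}$ of nearby solitons (on which $\nu$ is constant), decompose $g=\bar g+h$ with $\bar g\in\mathcal{P}$ and $h$ transverse, and Taylor-expand $\nu$ at $\bar g$, combining the coercive estimate $\nu''_{\bar g}(h)\leq -c\|h\|^2_{H^1}$ with Proposition \ref{thirdnu} to absorb the cubic remainder. The only cosmetic difference is that the paper fixes the transverse complement $W$ as the $L^2(e^{-f_{g_0}}\dv)$-orthogonal complement of $T_{g_0}\mathcal{P}$ and transfers the coercivity from $g_0$ to $\bar g$ via an explicit estimate on the variation of the stability operator $N$ (using Lemmas \ref{f',tau} and \ref{f'',tau}), rather than tracking the moving kernel of $N_{\bar g}$ and a spectral gap along $\mathcal{P}$ as you do.
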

\begin{rem}
 Observe that the converse implication always holds, even if we drop the integrability assumption.
\end{rem}

\begin{proof}
 By diffeomorphism invariance of $\nu$, it suffices to prove the theorem on a slice in the space of metrics. Let
\begin{align*}\mathcal{S}_{g_{0},f_{g_{0}}}=\mathcal{U}\cap\left\{g_{0}+h \text{ }\bigg\vert\text{ } h\in \delta^{-1}_{g_{0},f_{g_{0}}}(0)\right\}
\end{align*}
and let
\begin{align*}
 \mathcal{P}=\left\{g\in \mathcal{S}_{g_{0},f_{g_{0}}}\text{ }\bigg\vert\text{ }\ric_g+\nabla^2 f_g=\frac{1}{2\tau_g}\cdot g\right\}
\end{align*}
be the set of gradient shrinking Ricci solitons in the slice. By integrability, $\mathcal{P}$ is a finite-dimensional manifold with tangent space
\begin{align*}
T_{g_{0}}\mathcal{P}= \R\cdot\ric\oplus\kernel(N|_{\delta^{-1}_{f_{g_{0}}}(0)})
\end{align*}
where $N$ is the stability operator in Proposition \ref{secondnu}. Let $W$ be the $L^2(e^{-f_{g_{0}}}\dv)$-orthogonal complement of $T_{g_{0}}\mathcal{P}$ in $\delta^{-1}_{f_{g_{0}}}(0)$.
Let us pass to $C^{2,\alpha}$-spaces for the rest of the proof.
By the inverse function theorem for Banach manifolds, any ($C^{2,\alpha}$-)metric $g$ in $\mathcal{S}_{g_{0},f_{g_{0}}}$ can be written as $g=\bar{g}+h$, where $\bar{g}\in\mathcal{P}$ and $h\in W$.
By Taylor expansion,
 \begin{align*}\nu(\bar{g}+h)&=\nu(\bar{g})+\frac{1}{2}\frac{d^2}{dt^2}\bigg\vert_{t=0}\nu(\bar{g}+th)+R(\bar{g},h),\\
                           R(\bar{g},h)&=\int_0^1\left(\frac{1}{2}-t+\frac{1}{2}t^2\right)\frac{d^3}{dt^3}\nu(\bar{g}+th)dt.
 \end{align*}
By assumption, $\nu''_{g_{0}}$ is negative definite on $V$ and therefore,
\begin{align}\label{uniformsecondnu}
 \nu''_{g_{0}}(h)=\frac{\tau}{(4\pi\tau)^{n/2}}\left[-\epsilon\int_M|\nabla h|e^{-f}\dv
 +\int_M\langle[(\epsilon-\frac{1}{2})\Delta_f+\mathring{R}]h,h\rangle e^{-f}\dv\right]   \leq -C_1\left\| h\right\|_{H^1}^2
\end{align}
for all $h\in V$. By Taylor expansion again,
 \begin{align}\label{tayloragain}
  |\nu''_{\bar{g}}(h)-\nu''_{g_{0}}(h)|\leq C_2\left\| \bar{g}-g_{0}\right\|_{C^{2,\alpha}}\left\|h\right\|_{H^1}.
 \end{align}
More precisely,
\begin{align*}
 \frac{d}{ds}\bigg\vert_{s=0}N_{g+sk}h=&\nabla^2 k*h+\nabla k*\nabla h+k*\nabla^2 h+R*h*k+\nabla k*\nabla v_{h}+\nabla^2 v'\\
                                       &+\nabla f*\nabla k*h+\nabla f*k*\nabla h+\nabla^2 f'*h+\nabla f'*h
\end{align*}
by straightforward calculation.
By elliptic regularity,
\begin{align*}
 \left\|v_h\right\|_{H^1}\leq C_3\left\|h\right\|_{H^1}
\end{align*}
and by differentiating \eqref{v_h} and Lemma \ref{f',tau},
\begin{align*}
 \left\|v'\right\|_{H^1}\leq C_4\left\|h\right\|_{H^1}\left\|k\right\|_{C^{2,\alpha}}.
\end{align*}
Together with Lemma \ref{f',tau} again, this proves \eqref{tayloragain}. As a consequence,
\begin{align}\label{uniformsecondnu2}
 \nu''_{\bar{g}}(h)\leq -C_1\left\| h\right\|_{H^1}^2
\end{align}
for all $h\in V$ and for all $\bar{g}$ in a sufficiently small neighbourhood of $g_{0}$.
Thus by Proposition \ref{thirdnu},
 \begin{align*}
  \nu(\bar{g}+h)\leq \nu(\bar{g})-(C_5-C_6\left\|h\right\|_{C^{2,\alpha}})\left\|h\right\|_{H^1}^2
 \end{align*}
which shows that any $\nu(g)\leq\nu(g_{0})$ for any $g\in \mathcal{S}_{g_{0},f_{g_{0}}}$ close enough to $g_{0}$ in $C^{2,\alpha}$. By \cite[Lemma A.5]{Ach12}, any metric in $\mathcal{U}$ is isometric
to some metric in $\mathcal{S}_{g_{0},f_{g_{0}}}$, so that this inequality holds for all $g\in \mathcal{U}$.
In particular, it holds for smooth metrics.
\end{proof}
\begin{cor}
Let $(M,g_0)$ be a gradient shrinking Ricci soliton and suppose that all infinitesimal solitonic deformations are integrable. Then, linear stability and dynamical stability are equivalent.
\end{cor}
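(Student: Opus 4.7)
The plan is to prove the equivalence by establishing each direction separately, invoking Theorem \ref{maximalitynu} for one direction and a monotonicity/moduli argument combined with Theorem \ref{dynamicalstability} for the other.

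For the forward implication (linearly stable $\Rightarrow$ dynamically stable), the argument is essentially a concatenation of results already proved. By Theorem \ref{maximalitynu}, the linear stability of $g_0$ together with the integrability of all infinitesimal solitonic deformations implies that $g_0$ is a local maximizer of $\nu$. Then Theorem \ref{dynamicalstability} directly yields dynamical stability. No further work is needed.

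For the reverse implication (dynamically stable $\Rightarrow$ linearly stable), I will argue the contrapositive and exploit the local structure of the space of Ricci solitons. Suppose $g_0$ is linearly unstable. Then there exists $h \in V$ with $\langle Nh,h\rangle_{L^2(e^{-f}\dv)} > 0$, so $\nu''_{g_0}(h) > 0$. Combined with $\nu'_{g_0} = 0$, a Taylor expansion yields metrics $g'$ arbitrarily close to $g_0$ in any $C^m$-norm with $\nu(g') > \nu(g_0)$. Assume for contradiction that $g_0$ is dynamically stable. Choose $g' \in \mathcal{V}$, where $\mathcal{V}$ is the neighborhood from the dynamical stability definition, with $\mathcal{U}$ small enough that Lemma \ref{analyticnu} and the Podest\`a–Spiro structure theorem (discussed after Lemma \ref{ISDIED}) both apply. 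By dynamical stability, the normalized Ricci flow (equivalently, the modified $\tau$-flow modulo diffeomorphism) starting at $g'$ converges to some gradient shrinking Ricci soliton $g_\infty$ lying in $\mathcal{U}$. By diffeomorphism-invariance, analyticity, and monotonicity of $\nu$ along the flow, we obtain $\nu(g_\infty) \ge \nu(g') > \nu(g_0)$.

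The crucial step — and where the integrability hypothesis enters decisively — is to derive a contradiction from $\nu(g_\infty) > \nu(g_0)$. Under integrability, the set $\mathcal{P}$ of Ricci solitons in $\mathcal{U}$ (modulo the slice identification via $\Psi$) is a finite-dimensional connected submanifold through $g_0$, provided $\mathcal{U}$ is sufficiently small. Since every point of $\mathcal{P}$ is a critical point of $\nu$ (by Proposition \ref{firstnu}), for any smooth curve $\gamma(t)$ in $\mathcal{P}$ we have $\frac{d}{dt}\nu(\gamma(t)) = \nu'_{\gamma(t)}(\gamma'(t)) = 0$, so $\nu$ is constant on $\mathcal{P}$. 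In particular $\nu(g_\infty) = \nu(g_0)$, contradicting the strict inequality above. This rules out dynamical stability and completes the proof. The main obstacle, and the only substantive one, is verifying that the limit $g_\infty$ actually lies in the integrable moduli $\mathcal{P}$ rather than being an exotic Ricci soliton outside it; this is precisely what the Podest\`a–Spiro theorem guarantees once $\mathcal{U}$ is chosen small enough, together with the identification of their slice with the affine slice $\tilde{\mathcal{S}}_{g_0,f_{g_0}}$ via $\Psi$ as explained in the text preceding Lemma \ref{f'',tau}.
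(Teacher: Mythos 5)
Your proposal is correct. The forward direction is exactly the paper's (implicit) argument: Theorem \ref{maximalitynu} gives that linear stability plus integrability implies $g_0$ is a local maximizer of $\nu$, and Theorem \ref{dynamicalstability} then gives dynamical stability. For the reverse direction you take a genuinely different route. The paper's intended chain is shorter and does not use integrability there at all: if $g_0$ is linearly unstable, then $\nu''_{g_0}(h)>0$ for some $h$, so $g_0$ is not a local maximizer of $\nu$ (this is the content of the remark after Theorem \ref{maximalitynu} that ``the converse implication always holds''), whence Theorem \ref{dynamicalinstabilitymodulodiffeo} produces a nontrivial ancient flow converging to $g_0$, which is incompatible with dynamical stability. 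You instead run a direct contradiction: assuming dynamical stability, you flow from a nearby metric with $\nu(g')>\nu(g_0)$, use monotonicity and continuity of $\nu$ to get $\nu(g_\infty)>\nu(g_0)$ for the limit soliton, and then invoke the Podest\`a--Spiro structure (under integrability) to conclude that $\nu$ is constant on the local moduli space $\mathcal{P}$, forcing $\nu(g_\infty)=\nu(g_0)$. This works, and it is a nice self-contained alternative, but note two small points you are implicitly using: (i) continuity of $\nu$ at $g_\infty$, which requires $\mathcal{U}$ to lie inside the analyticity neighbourhood of Lemma \ref{analyticnu} (upper semicontinuity alone would not suffice for $\nu(g_\infty)\geq\lim\nu$ in the direction you need); and (ii) connectedness of $\mathcal{P}$ for small $\mathcal{U}$, which holds because under integrability $\mathcal{P}=\mathcal{Z}$ is a submanifold parametrized over a ball in $\kernel(N|_V)\oplus\R\cdot\ric$. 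The trade-off is that your argument re-derives a special case of the instability mechanism at the cost of invoking the moduli-space machinery, whereas the paper's route simply cites Theorem \ref{dynamicalinstabilitymodulodiffeo}.
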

\begin{rem}
 It would be interesting to find curvature conditions which ensure linear stability of a given Ricci soliton. This would generalize previous results obtained in the Einstein case,
  see \cite{Koi78,Koi83,IN05,DWW05,DWW07,Kro13b}.
 A detailed study of the stability operator $N$ will be the content of our future investigations.
\end{rem}
\begin{proof}[Proof of Theorem \ref{nonintegrableISD}.]
 The Fubini-Study metric is Einstein and its Einstein constant is given by $1/2\tau$. By \cite[pp.\ 25-26]{Kro13a},
there exists an eigenfunction $v\in C^{\infty}(M)$ of the Laplacian with eigenvalue $1/\tau$ such that $\int_M v^3\dv\neq 0$. By Lemma \ref{ISDIED}, $h=v\cdot g_{fs}+2\tau\nabla^2 v$ is an infinitesimal solitonic deformation.
Suppose there exists a curve of Ricci solitons $g_t$ such that $\frac{d}{dt}|_{t=0}g_t=h$. 
 Since $g_{fs}$ is a critical point of $\nu$ and $h$ lies in the kernel of its linearization, $\frac{d}{dt}|_{t=0}\nu(g_t)=\frac{d^2}{dt^2}|_{t=0}\nu(g_t)=0$. The third variation equals
\begin{align*}
 \frac{d^3}{dt^3}\bigg\vert_{t=0}\nu(g_t)=\nu'''(h)=\nu'''(v\cdot g_{fs})=\frac{2n-2}{\volume(M,g_{fs})}\int_M v^3\dv\neq0.
\end{align*}
Here we used the diffeomorphism invariance of $\nu$ and the third variational formula in \cite[Proposition 9.1]{Kro13a}.
This contradicts the fact that $\nu$ must be constant along $g_t$ and thus, $h$ is not integrable.
\end{proof}
\begin{rem}
 It seems likely that $(\CP^n,g_{fs})$ is isolated in the moduli space of Ricci solitons, because all infinitesimal solitonic deformations arise from conformal deformations. 
\end{rem}
\begin{rem}The space $(\CP^n,g_{fs})$ is neutrally linearly stable but dynamically unstable \cite[Corollary 1.11]{Kro13a}.
 \end{rem}
\vspace{3mm}

\textbf{Acknowledgement.} The author would like to thank Christian B\"ar, Stuart Hall and Thomas Murphy for their support and helpful discussions.
Moreover, the author thanks
Sonderforschungsbereich 647
funded by
Deutsche Forschungsgemeinschaft
for financial support.
\newcommand{\etalchar}[1]{$^{#1}$}


\begin{thebibliography}{CCG{\etalchar{+}}07}
 
\providecommand{\url}[1]{\texttt{#1}}
\expandafter\ifx\csname urlstyle\endcsname\relax
  \providecommand{\doi}[1]{doi: #1}\else
  \providecommand{\doi}{doi: \begingroup \urlstyle{rm}\Url}\fi

\bibitem[Ach12]{Ach12}
\textsc{Ache}, Antonio G.:
\newblock On the uniqueness of asymptotic limits of the Ricci flow.
\newblock arXiv preprint arXiv:1211.3387,
\newblock (2012).

\bibitem[Bes08]{Bes08}
\textsc{Besse}, Arthur~L.:
\newblock \emph{{Einstein manifolds. Reprint of the 1987 edition.}}
\newblock {Berlin: Springer}, 2008

\bibitem[Bre10]{Bre10}
\textsc{Brendle}, Simon:
\newblock \emph{{Ricci flow and the sphere theorem.}}
\newblock Graduate Studies in Mathematics 111. Providence, RI: American
  Mathematical Society (AMS). vii, 176 p., 2010

\bibitem[Cao96]{Cao96} 
\textsc{Cao}, Huai-Dong:
\newblock \emph{Existence of gradient K\"ahler-Ricci solitons,}
\newblock Elliptic and Parabolic Methods in Geometry (Minneapolis, MN, 1994) 1--16

\bibitem[Cao10]{Cao10}
\textsc{Cao}, Huai-Dong:
\newblock "Recent progress on Ricci solitons. Recent advances in geometric analysis, 1–38, Adv. Lect. Math. (ALM), 11."
\newblock \emph{Int. Press, Somerville, MA}
\newblock (2010)

\bibitem[CHI04]{CHI04}
\textsc{Cao}, Huai-Song ; \textsc{Hamilton}, Richard  ; \textsc{Ilmanen}, Tom:
\newblock Gaussian densities and stability for some {R}icci solitons.
\newblock arXiv preprint math/0404165
\newblock   (2004).

\bibitem[CH13]{CH13}
\textsc{Cao}, Huai-Dong ; \textsc{He}, Chenxu:
\newblock Linear {S}tability of {P}erelmans $\nu$-entropy on {S}ymmetric spaces
  of compact type.
\newblock arXiv preprint arXiv:1304.2697,
\newblock   (2013).

\bibitem[CZ12]{CZ12}
\textsc{Cao}, Huai-Dong ; \textsc{Zhu}, Meng:
\newblock {On second variation of Perelman's Ricci shrinker entropy.}
\newblock {In: }\emph{Math.\ Ann.\ } \textbf{353} (2012), no. 3, 747--763

\bibitem[CG96]{CG96}
\textsc{Chave}, Thierry ; \textsc{Galliano}, Valent:
\newblock {On a class of compact and non-compact quasi-Einstein metrics and their renormalizability properties.}
\newblock {In: }\emph{Nucl.\ Phys.\ B} \textbf{478} (1996), no. 3, 758--778

\bibitem[CCG{\etalchar{+}}07]{CC07}
\textsc{Chow}, Bennett ; \textsc{Chu}, Sun-Chin ; \textsc{Glickenstein}, David
  ; \textsc{Guenther}, Christine ; \textsc{Isenberg}, James ; \textsc{Ivey},
  Tom ; \textsc{Knopf}, Dan ; \textsc{Lu}, Peng ; \textsc{Luo}, Feng  ;
  \textsc{Ni}, Lei:
\newblock \emph{{The Ricci flow: techniques and applications. Part I: Geometric
  aspects.}}
\newblock Mathematical Surveys and Monographs 135. Providence, RI: American
  Mathematical Society (AMS). xxiii, 536 p., 2007

\bibitem[CM12]{CM12}
\textsc{Colding}, Tobias~H. ; \textsc{Minicozzi}, William~P.:
\newblock {On uniqueness of tangent cones for Einstein manifolds}.
\newblock {In: }\emph{Invent.\ Math.\ } (2013), 1--74

\bibitem[DWW05]{DWW05}
\textsc{Dai}, Xianzhe ; \textsc{Wang}, Xiaodong  ; \textsc{Wei}, Guofang:
\newblock {On the stability of Riemannian manifold with parallel spinors.}
\newblock {In: }\emph{Invent.\ Math.\ } \textbf{161} (2005), no. 1, 151--176

\bibitem[DWW07]{DWW07}
\textsc{Dai}, Xianzhe ; \textsc{Wang}, Xiaodong  ; \textsc{Wei}, Guofang:
\newblock {On the variational stability of K\"ahler-Einstein metrics.}
\newblock {In: }\emph{Commun.\ Anal.\ Geom.\ } \textbf{15} (2007), no. 4, 669--693

\bibitem[DW11]{DW11}
\textsc{Dancer}, Andrew S. ; \textsc{Wang}, McKenzie Y. :
\newblock {On Ricci solitons of cohomogeneity one.}
\newblock {In: }\emph{Ann.\ Glob.\ Anal.\ Geom.\ } \textbf{39} (2011), no. 3, 259--292

\bibitem[Ebi70]{Eb70}
\textsc{Ebin}, David~G.:
\newblock {The manifold of Riemannian metrics.}
\newblock {In: }\emph{Proc. {S}ymp. {AMS}} Bd.~15, 1970, 11--40

\bibitem[ELM08]{ELM08}
\textsc{Eminenti}, Manolo ; \textsc{La Nave}, Gabriele ; \text{Mantegazza}, Carlo:
\newblock{Ricci solitons: the equation point of view}.
\newblock{In: }\emph{Manuscripta Math.\ } \textbf{127} (2008), no. 3, 345--367

\bibitem[Fri80]{Fri80}
\textsc{Friedan}, Daniel:
\newblock{Nonlinear models in 2 + $\epsilon$ dimensions.}
\newblock {In: }\emph{Phys.\ Rev.\ Lett.\ } \textbf{45} (1980), no. 13, 1057

\bibitem[FS13]{FS13}
\textsc{Futaki}, Akito ; \textsc{Sano}, Yuji:
\newblock{Lower diameter bounds for compact shrinking Ricci solitons}.
\newblock {In: }\emph{Asian Journ.\ Math.\ } \textbf{17} (2013), no. 1, 17--32

\bibitem[GIK02]{GIK02}
\textsc{Guenther}, Christine ; \textsc{Isenberg}, James  ; \textsc{Knopf}, Dan:
\newblock {Stability of the Ricci flow at Ricci-flat metrics.}
\newblock {In: }\emph{Commun.\ Anal.\ Geom.\ } \textbf{10} (2002), no. 4, 741--777

\bibitem[Ham82]{Ham82}
\textsc{Hamilton}, Richard~S.:
\newblock {Three-manifolds with positive Ricci curvature.}
\newblock {In: }\emph{J.\ Differ.\ Geom.\ } \textbf{17} (1982), 255--306

\bibitem[Ham88]{Ham88}
\textsc{Hamilton}, Richard~S.:
\newblock {The Ricci flow on surfaces.}
\newblock {In: }\emph{Contemp.\ Math.\ } \textbf{71} (1988), no. 1

\bibitem[Ham95]{Ham95}
\textsc{Hamilton}, Richard~S.:
\newblock The formation of singularities in the Ricci flow.
\newblock \emph{Surveys in differential geometry} \textbf{2} (1995), 7--136

\bibitem[Has12]{Has12}
\textsc{Haslhofer}, Robert:
\newblock {Perelman's lambda-functional and the stability of Ricci-flat
  metrics.}
\newblock {In: }\emph{Calc.\ Var.\ Partial Differ.\ Equ.\ } \textbf{45} (2012), no. 3-4, 481--504

\bibitem[HHM14]{HHM14}
\textsc{Hall}, Stuart ; \textsc{Haslhofer}, Robert ; \textsc{Murphy}, Thomas:
\newblock {The stability inequality for Ricci-flat cones.}
\newblock {In: }\emph{J.\ Geom.\ Ana.\ } \textbf{24} (2014) no. 1, 472--494

\bibitem[HM11]{HM11}
\textsc{Hall}, Stuart ; \textsc{Murphy}, Thomas:
\newblock {On the linear stability of Kähler-Ricci solitons.}
\newblock Proceedings of the American Mathematical Society \textbf{139} (2011), no. 9, 3327--3337.

\bibitem[HM13]{HM13}
\textsc{Haslhofer}, Robert ; \textsc{M\"uller}, Reto:
\newblock Dynamical stability and instability of {R}icci-flat metrics.
\newblock arXiv preprint arXiv:1301.3219,
\newblock   (2013).

\bibitem[IN05]{IN05}
\textsc{Itoh}, Mitsuhiro ; \textsc{Nakagawa}, Tomomi:
\newblock {Variational stability and local rigidity of Einstein metrics.}
\newblock {In: }\emph{Yokohama Math.\ J.\ } \textbf{51} (2005), no. 2, 103--115

 \bibitem[Ive93]{Ive93}
\textsc{Ivey}, Thomas:
\newblock {Ricci solitons on compact three-manifolds.}
\newblock {In: }\emph{Diff.\ Geom.\ Appl.\ } \textbf{3} (1993), no. 4, 301--307

\bibitem[Kho08]{Kho08}
\textsc{Kholodenko}, Arkady L.
\newblock {Towards physically motivated proofs of the Poincaré and geometrization conjectures.}
\newblock {In: }\emph{J.\ Geom.\ Phys.\ } \textbf{58} (2008), no. 2, 259--290

\bibitem[Koi78]{Koi78}
\textsc{Koiso}, Norihito:
\newblock {Non-deformability of Einstein metrics.}
\newblock {In: }\emph{Osaka J.\ Math.\ } \textbf{15} (1978), 419--433

\bibitem[Koi80]{Koi80}
\textsc{Koiso}, Norihito:
\newblock {Rigidity and stability of Einstein metrics - The case of compact
  symmetric spaces.}
\newblock {In: }\emph{Osaka J.\ Math.\ } \textbf{17} (1980), 51--73

\bibitem[Koi82]{Koi82}
\textsc{Koiso}, Norihito:
\newblock {Rigidity and infinitesimal deformability of Einstein metrics.}
\newblock {In: }\emph{Osaka J.\ Math.\ } \textbf{19} (1982), 643--668

\bibitem[Koi83]{Koi83}
\textsc{Koiso}, Norihito:
\newblock {Einstein metrics and complex structures.}
\newblock {In: }\emph{Invent.\ Math.\ } \textbf{73} (1983), 71--106

\bibitem[Koi90]{Koi90}
\textsc{Koiso}, Norihito:
\newblock On rotationally symmmetric Hamilton’s equation for K\"ahler-Einstein metrics.
\newblock Recent Topics in Diff. Anal. Geom., Adv. Studies Pure Math., \textbf{18-I}, Academic Press, Boston,
MA (1990), 327--337

\bibitem[Kr\"o13a]{Kro13b}
\textsc{Kr\"oncke}, Klaus:
\newblock On the {s}tability of {E}instein {m}anifolds.
\newblock arXiv preprint arXiv:1311.6749,
\newblock   (2013).

\bibitem[Kr\"o13b]{Kro13a}
\textsc{Kr\"oncke}, Klaus:
\newblock {Einstein metrics, Ricci flow and the Yamabe invariant.}
\newblock arXiv preprint arXiv:1312.2224,
\newblock   (2013).

\bibitem[Kr\"o13c]{Kro13}
\textsc{Kr\"oncke}, Klaus:
\newblock \emph{Stability of {E}instein {M}anifolds}, Universit\"at Potsdam,
  PhD thesis, 2013

\bibitem[Per02]{Per02}
\textsc{Perelman}, Grisha:
\newblock {The entropy formula for the Ricci flow and its geometric
  applications.}
\newblock arXiv preprint math/0211159,
\newblock   (2002).

\bibitem[PS10]{PS10}
\textsc{Podest{\`a}}, Fabio ; \textsc{Spiro}, Andrea:
\newblock {K\"ahler-Ricci solitons on homogeneous toric bundles.}
\newblock {In: }\emph{J.\ reine Angew.\ Math.\ } \textbf{642} (2010), 109--127 

\bibitem[PS13]{PS13}
 \textsc{Podest{\`a}}, Fabio ; \textsc{Spiro}, Andrea:
 \newblock {On moduli spaces of Ricci solitons.}
 \newblock {In: }\emph{J.\ Geom.\ Anal.\ } (2013), 1--18

\bibitem[Ses06]{Ses06}
\textsc{Sesum}, Natasa:
\newblock {Linear and dynamical stability of Ricci-flat metrics.}
\newblock {In: }\emph{Duke Math.\ J.\ } \textbf{133} (2006), no. 1, 1--26

\bibitem[SW13]{SW13}
\textsc{Sun}, Song ; \textsc{Wang}, Yuanqi:
\newblock On the {K}\"ahler-{R}icci flow near a {K}\"ahler-{E}instein metric.
\newblock {In: }\emph{J.\ reine Angew.\ Math.\ } (2013)

\bibitem[WZ04]{WZ04}
\textsc{Wang}, Xu-Jia ; \textsc{Zhu}, Xiaohua
\newblock {K\"ahler–Ricci solitons on toric manifolds with positive first Chern class.}
\newblock{In: }\emph{Adv.\ Math.\ } \textbf{188} (2004), no. 1, 87--103

\bibitem[Ye93]{Ye93}
\textsc{Ye}, Rugang:
\newblock {Ricci flow, Einstein metrics and space forms.}
\newblock {In: }\emph{Trans.\ Am.\ Math.\ Soc.\ } \textbf{338} (1993), no. 2, 871--896

\end{thebibliography}
\end{document}